\crefname{assumption}{Assumption}{Assumptions}
\crefname{remark}{Remark}{Remarks}
\def\NUMA{%
    NUMA, Department of Computer Science, KU Leuven, Leuven, Belgium% (\email{arne.bouillon@kuleuven.be}, \email{ignace.bossuyt1@kuleuven.be}, \email{giovanni.conni@kuleuven.be}, \email{karl.meerbergen@kuleuven.be}, \email{giovanni.samaey@kuleuven.be}).%
}
\def\corresponding{%
    Corresponding author: \email{arne.bouillon@kuleuven.be}%
}
\renewcommand*\env@matrix[1][*\c@MaxMatrixCols c]{%
  \hskip -\arraycolsep
  \let\@ifnextchar\new@ifnextchar
  \array{#1}}
\newenvironment{smallarray}[1]
 {\null\,\vcenter\bgroup\scriptsize
  \arraycolsep=.13885em
  \hbox\bgroup$\array{@{}#1@{}}}
 {\endarray$\egroup\egroup\,\null}
\newcommand\kronm{}
\newcommand\kronv\mv
\newcommand\ad\lambda
\newcommand\Ad\Lambda
\newcommand\kron\otimes
\newcommand\trsp{{\mathstrut\scriptscriptstyle{\top}}}
\newcommand\iu{{\mathrm{i}\mkern1mu}}
\newcommand\E{{\mathrm e}}
\newcommand\ex{{\mathrm{ex}}}
\def\mper{{.}}
\def\mcom{{,}}
\def\Sidx{M}
\def\tidx{l}
\def\Tidx{L}
\def\iidx{k}
\newcommand*\dualp[2]{{\langle#1,#2\rangle}}
\newcommand\dt{{\delta t}}
\newcommand\Dt{{\Delta t}}
\newcommand\DT{{\Delta T}}
\newcommand\ukus[2]{{#2}}
\def\sz{\ukus{s}{z}}
\newcommand*\mv[1]{{\bm{{#1}}}}
\DeclareMathOperator\diag{diag}
\definecolor{matlabred}{rgb}{0.6350,0.0780,0.1840}
\definecolor{matlabgreen}{rgb}{0.4660,0.6740,0.1880}
\definecolor{matlabblue}{rgb}{0,0.4470,0.7410}
\algnewcommand{\LineComment}[1]{\Statex \(\triangleright\) #1}
\algnewcommand{\LineCommentWithSkip}[1]{\Statex \hskip 1cm \(\triangleright\) #1}
\LetLtxMacro\orgvdots\vdots
\LetLtxMacro\orgddots\ddots
\DeclareRobustCommand\vdots{%
  \mathpalette\@vdots{}%
}
\newcommand*{\@vdots}[2]{%
  % #1: math style
  % #2: unused
  \sbox0{$#1\cdotp\cdotp\cdotp\m@th$}%
  \sbox2{$#1.\m@th$}%
  \vbox{%
    \dimen@=\wd0 %
    \advance\dimen@ -3\ht2 %
    \kern.5\dimen@
    % remove side bearings
    \dimen@=\wd2 %
    \advance\dimen@ -\ht2 %
    \dimen2=\wd0 %
    \advance\dimen2 -\dimen@
    \vbox to \dimen2{%
      \offinterlineskip
      \copy2 \vfill\copy2 \vfill\copy2 %
    }%
  }%
}
\DeclareRobustCommand\ddots{%
  \mathinner{%
    \mathpalette\@ddots{}%
    \mkern\thinmuskip
  }%
}
\newcommand*{\@ddots}[2]{%
  % #1: math style
  % #2: unused
  \sbox0{$#1\cdotp\cdotp\cdotp\m@th$}%
  \sbox2{$#1.\m@th$}%
  \vbox{%
    \dimen@=\wd0 %
    \advance\dimen@ -3\ht2 %
    \kern.5\dimen@
    % remove side bearings
    \dimen@=\wd2 %
    \advance\dimen@ -\ht2 %
    \dimen2=\wd0 %
    \advance\dimen2 -\dimen@
    \vbox to \dimen2{%
      \offinterlineskip
      \hbox{$#1\mathpunct{.}\m@th$}%
      \vfill
      \hbox{$#1\mathpunct{\kern\wd2}\mathpunct{.}\m@th$}%
      \vfill
      \hbox{$#1\mathpunct{\kern\wd2}\mathpunct{\kern\wd2}\mathpunct{.}\m@th$}%
    }%
  }%
}
\title{Diagonali\ukus{s}{z}ation-based preconditioners and generali\ukus{s}{z}ed convergence bounds for ParaOpt}
\author{{Arne Bouillon\thanks{\NUMA}} \thanks{\corresponding}\and Giovanni Samaey\footnotemark[1]\and Karl Meerbergen\footnotemark[1]}
\begin{document}

\maketitle

\begin{abstract}
    The ParaOpt algorithm was recently introduced as a time-parallel solver for optimal-control problems with a terminal-cost objective, and convergence results have been presented for the linear diffusive case with implicit-Euler time integrators. We reformulate ParaOpt for tracking problems and provide generali\sz{}ed convergence analyses for both objectives. We focus on linear diffusive equations and prove convergence bounds that are generic in the time integrators used. For large problem dimensions, ParaOpt's performance depends crucially on having a good preconditioner to solve the arising linear systems. For the case where ParaOpt's cheap, coarse-grained propagator is linear, we introduce diagonali\sz{}ation-based preconditioners inspired by recent advances in the ParaDiag family of methods. These preconditioners not only lead to a weakly-scalable ParaOpt version, but are themselves invertible in parallel, making maximal use of available concurrency. They have proven convergence properties in the linear diffusive case that are generic in the time discreti\sz{}ation used, similarly to our ParaOpt results. Numerical results confirm that the iteration count of the iterative solvers used for ParaOpt's linear systems becomes constant in the limit of an increasing processor count. The paper is accompanied by a sequential \textsc{Matlab} implementation.

\end{abstract}

\begin{keywords}
    Optimal control, ParaOpt algorithm, preconditioning, parallel-in-time

\end{keywords}

\begin{MSCcodes}
    49M05, % Calculus of variations and optimal control; optimization
       % -> Numerical methods in optimal control
       %    -> Numerical methods based on necessary conditions
65F08, % Numerical analysis
       % -> Numerical linear algebra
       %    -> Preconditioners for iterative methods
65K10, % Numerical analysis
       % -> Numerical methods for mathematical programming, optimization and variational techniques
       %    -> Numerical optimization and variational techniques
65Y05  % Numerical analysis
       % -> Computer aspects of numerical algorithms
       %    -> Parallel numerical computation

\end{MSCcodes}

\section{Introduction} \label{sec:intro}
    We study algorithms to solve the optimal-control problem
\begin{equation} \label{eq:intro:intro:optprob}
    \min_{\mv y, \mv u}J(\mv y, \mv u) \quad \text{such that} \quad \mv y'(t) = \mv g(\mv y(t)) + \mv u(t) \quad \text{and} \quad \mv y(0) = \mv{y_\mathrm{init}}    \mper
\end{equation}
This problem features a state variable $\mv y\in\mathbb R^\Sidx$ evolving over the time interval $t\in[0,T]$ with initial value $\mv{y_\mathrm{init}}$, an additive control term $\mv u\in\mathbb R^\Sidx$ and an \textsc{ode} that determines how $\mv y$ evolves under $\mv u$'s influence, including a potentially non-linear function $\mv g$. Bold-faced variables denote vectors. We call $J$ the \emph{objective function}, as we want to choose $\mv u$ to minimi\sz{}e $J$. We are interested in \emph{tracking} and \emph{terminal-cost} objectives
\begin{equation} \label{eq:intro:intro:obj}
    J(\mv y, \mv u) = \begin{cases}
        \text{Tracking:} & \frac12\int_0^T\norm{\mv y(t)-\mv{y_\mathrm d}(t)}_2^2\dif t + \frac\gamma2\int_0^T\norm{\mv u(t)}_2^2\dif t    \mcom\\
        \text{Terminal cost:} & \frac12\norm{\mv y(T)-\mv{y_\mathrm{target}}}_2^2 + \frac\gamma2\int_0^T\norm{\mv u(t)}_2^2\dif t    \mper
    \end{cases}
\end{equation}
Both use a regulari\sz{}ation parameter $\gamma>0$ that penali\sz{}es large control actions.

As can be found in, for example, \cite{ganderPARAOPTPararealAlgorithm2020a,wuDiagonalizationbasedParallelintimeAlgorithms2020b,hinzeOptimizationPDEConstraints2009b}, a minimum of the optimal-control problem \cref{eq:intro:intro:optprob} must satisfy the \emph{optimality system}
\begin{subequations} \label{eq:intro:intro:optsys}
    \begin{align}
        &\mv y'(t) = \mv g(\mv y(t)) - \mv\ad(t)/\gamma, \quad \mv y(0) = \mv{y_\mathrm{init}}\\
        &\begin{cases}
            \text{Tracking:} & \mv\ad'(t) = - \mv g'(\mv y(t))^*\mv\ad(t) + \mv{y_\mathrm d}(t) - \mv y(t), \quad \mv\ad(T) = \mv0    \mcom\\
            \text{Terminal cost:} & \mv\ad'(t) = - \mv g'(\mv y(t))^*\mv\ad(t), \quad \mv\ad(T)=\mv y(T) - \mv{y_\mathrm{target}}    \mcom
        \end{cases}
    \end{align}
\end{subequations}
which corresponds to solving a boundary value problem (\textsc{bvp}) in the state variable $\mv y(t)$ and the \emph{adjoint} variable $\mv\ad(t)\coloneqq-\gamma\mv u(t)$. We currently place no restrictions on the function $\mv g$, but from \cref{sec:conv} onward\ukus{s}{}, our analyses and preconditioners will require a linear $\mv g(\mv y(t)) = -K\mv y(t)$ where $K\in\mathbb R^{\Sidx\times\Sidx}$.

To make optimal use of increasingly parallel hardware, a lot of attention has recently been drawn to \emph{time-parallel} methods for solving initial-value problems (\textsc{ivp}s). Many time-parallel approaches to optimal control incorporate \textsc{ivp} solvers as subroutines in an optimi\sz{}ation loop \cite{gotschelEfficientParallelinTimeMethod2019a,skeneParallelintimeApproachAccelerating2021a}. This is a very general approach; however, the speed-up can be limited \cite{ganderPARAOPTPararealAlgorithm2020a}. Optimal-control problems such as \cref{eq:intro:intro:optprob}--\cref{eq:intro:intro:obj} can be solved directly with the \textsc{bvp} \cref{eq:intro:intro:optsys}. The time-parallel ParaDiag algorithm for \textsc{ivp}s \cite{mcdonaldPreconditioningIterativeSolution2018a,ganderConvergenceAnalysisPeriodiclike2019a,ganderParaDiagParallelintimeAlgorithms2021a}, for example, has been extended to these optimality systems for wave equations \cite{wuParallelInTimeBlockCirculantPreconditioner2020a} and parabolic equations \cite{wuDiagonalizationbasedParallelintimeAlgorithms2020b}. The \textsc{bvp} approach is well-suited to \emph{multiple shooting} (subdividing the time interval), which can enable faster convergence \cite{ganderPARAOPTPararealAlgorithm2020a,Riahi_Salomon_Glaser_Sugny_2016}. 

The Parareal \textsc{ivp} solver \cite{lionsResolutionEDPPar2001a} has a \textsc{bvp} equivalent in ParaOpt \cite{ganderPARAOPTPararealAlgorithm2020a}, which is the main subject of this paper. Reviewed in \cref{sec:paraopt}, the ParaOpt method decomposes the time interval into sub-intervals -- on which parallel solvers are deployed -- and ensures that the state and adjoint state on interval boundaries match through an inexact-Newton procedure. ParaOpt was formulated for terminal-cost objectives, with convergence results for linear diffusive problems, in \cite{ganderPARAOPTPararealAlgorithm2020a}. This paper extends ParaOpt to the tracking optimality system. For both objectives in \cref{eq:intro:intro:obj}, we prove bounds on ParaOpt's convergence -- for linear diffusive equations -- that are generic in the time integrators used. This stands in contrast to the existing terminal-cost bound, which specifically holds for implicit Euler. \Cref{sec:conv} contains these convergence results, although some proofs are postponed to \cref{sec:proof-tr,sec:proof-tc} for ease of presentation. We then introduce diagonali\sz{}ation-based preconditioners for ParaOpt in \cref{sec:diag}. Our bounds and the performance of our preconditioners are confirmed by numerical tests in \cref{sec:num}, after which \cref{sec:concl} concludes the paper.

\section{The ParaOpt algorithm} \label{sec:paraopt}
    The ParaOpt algorithm \cite{ganderPARAOPTPararealAlgorithm2020a} for time-parallel optimal control was formulated for terminal-cost objectives. We review this method, but use a slightly more general formulation that also allows for the tracking objective.
\begin{remark}[Rescaling and circumflexes]
    For tracking, define $\mv{\widehat\ad}\coloneqq\mv\ad/\sqrt\gamma$; this rescaling introduces symmetry between the forward and backward equations of the \textsc{bvp} \cref{eq:intro:intro:optsys} and the subproblem of finding $\mv{\widehat\ad}$ is generally easier (see also \cite{wuParallelInTimeBlockCirculantPreconditioner2020a,wuDiagonalizationbasedParallelintimeAlgorithms2020b}).
    
    There will be many similarities between the tracking and terminal-cost cases throughout the paper. To highlight this and simplify notation, we use variables with a $\widehat\cdot$ diacritic whose definition depends on the objective, summari\sz{}ed in \cref{tab:paraopt:paraopt:hats}. For example, terminal cost uses $\mv{\widehat\ad}\coloneqq\mv\ad$. The other variables are defined in due course.
\end{remark}

\begin{table}
    \centering
    \begin{tabular}{c|cccc}
        & $\widehat\ad$ & $\widehat\Tidx$ & $\widehat\sigma$ & $\widehat\gamma$\\
        \hline
        Tracking & $\ad/\sqrt\gamma$ & $\Tidx - 1$ & $\DT\sigma$ & $\DT/\sqrt\gamma$\\
        Terminal cost & $\ad$ & $\Tidx$ & $\DT\sigma$ & $\DT/\gamma$
        \vspace{-.3cm}
    \end{tabular}
    \caption{
        Definitions of $\widehat\ad$, $\widehat\Tidx$, $\widehat\sigma$ and $\widehat\gamma$ for each objective function
        \vspace{-.7cm}
    }
    \label{tab:paraopt:paraopt:hats}
\end{table}

To solve the optimality system \cref{eq:intro:intro:optsys}, ParaOpt starts by dividing $[0, T]$ into $\Tidx$ sub-intervals $\{[T_{\tidx-1}, T_\tidx]\}_{\tidx=1}^\Tidx$. We will assume these to be equally large: $T_\tidx-T_{\tidx-1}=\Delta T$ for all $\tidx$. We then introduce approximations $\{\mv y_\tidx\}_{\tidx=1}^{\widehat\Tidx}$ and $\{\mv{\widehat\ad}_\tidx\}_{\tidx=1}^{\widehat\Tidx}$ to $\mv y(t=\tidx\DT)$ and $\mv{\widehat\ad}(t=\tidx\DT)$. Here, the range $1$ to $\widehat\Tidx$ (see \cref{tab:paraopt:paraopt:hats}) is justified by boundary values either being given as boundary conditions, or not being needed to compute the other unknowns. We define \emph{propagators} $\mathcal P$ and $\mathcal Q$ that approximately solve the \textsc{bvp} \cref{eq:intro:intro:optsys} on the smaller interval $[T_{\tidx-1}, T_\tidx]$ and with the altered boundary conditions $\mv y(T_{\tidx-1}) = \mv y_{\tidx-1}$ and $\mv{\widehat\ad}(T_\tidx) = \mv{\widehat\ad}_\tidx$. Specifically, we have
\begin{equation}
    \mathcal P(T_{\tidx-1}, T_\tidx, \mv y_{\tidx-1}, \mv{\widehat\ad}_\tidx) \approx \mv y_\tidx \quad \text{and} \quad \mathcal Q(T_{\tidx-1}, T_\tidx, \mv y_{\tidx-1}, \mv{\widehat\ad}_\tidx) \approx \mv{\widehat\ad}_{\tidx-1} \quad \text{for all $\tidx$}    \mcom
\end{equation}
such that $\mathcal P$ and $\mathcal Q$ propagate the state forward and adjoint state backward, respectively, over the sub-interval $[T_{\tidx-1}, T_\tidx]$. For notational conciseness, we will omit the first two arguments from $\mathcal P$ and $\mathcal Q$ when they are clear from context. The system \cref{eq:intro:intro:optsys} can be solved by composing $\mathcal P$ and $\mathcal Q$ over the sub-intervals if the boundary values are correct, and if the state and adjoint variables are continuous at interval boundaries. These are called the \emph{matching conditions}. Define the concatenations $\mv y=[\mv y_1^\trsp, \ldots, \mv y_{\widehat\Tidx}^\trsp]^\trsp$ and $\mv{\widehat\ad}=[\mv{\widehat\ad}_1^\trsp, \ldots, \mv{\widehat\ad}_{\widehat\Tidx}^\trsp]^\trsp$, and the variable
\begin{equation} \label{eq:paraopt:paraopt:Qhat}
    \widehat Q = \begin{cases}
        \text{Tracking:} & \mv y_{\widehat\Tidx}-\mv{y_\mathrm{target}}    \mcom\\
        \text{Terminal cost:} & \mathcal Q(\mv y_{\widehat\Tidx}, \mv0)    \mper
    \end{cases}
\end{equation}
Then the matching conditions can be written as the non-linear system
\begin{equation}
    \mv f\left(\left[\begin{smallmatrix}
        \mv y\\ \mv\ad\\
    \end{smallmatrix}\right]\right) \coloneqq \left[\begin{smallmatrix}
        \mv y_1 - \mathcal P(\mv{y_\mathrm{init}}, \mv\ad_1)\\
        \vdots\\
        \mv y_{\widehat\Tidx - 1} - \mathcal P(\mv y_{\widehat\Tidx - 2}, \mv \ad_{\widehat\Tidx - 1})\\
        \mv y_{\widehat\Tidx} - \mathcal P(\mv y_{\widehat\Tidx - 1}, \mv \ad_{\widehat\Tidx})\\
        \cmidrule(lr){1-1}
        \mv{\widehat\ad}_1 - \mathcal Q(\mv y_1, \mv{\widehat\ad}_2)\\
        \vdots\\
        \mv{\widehat\ad}_{\widehat\Tidx - 1} - \mathcal Q(\mv y_{\widehat\Tidx - 1}, \mv{\widehat\ad}_{\widehat\Tidx})\\
        \mv{\widehat\ad}_{\widehat\Tidx} - \widehat Q\\
    \end{smallmatrix}\right] = \kronv0    \mcom
\end{equation}
which can be solved with Newton's method (introducing iteration index $k$)
\begin{equation} \label{eq:paraopt:tc:newton}
    \mv f'\left(\left[\begin{smallmatrix}
        \mv y^{\iidx-1} \\ \mv{\widehat\ad}^{\iidx-1}\\
    \end{smallmatrix}\right]\right)\left[\begin{smallmatrix}
        \mv y^\iidx  - \mv y^{\iidx - 1}\\ \mv{\widehat\ad}^\iidx - \mv{\widehat\ad}^{\iidx - 1}\\
    \end{smallmatrix}\right] = -\mv f\left(\left[\begin{smallmatrix}
        \mv y^{\iidx-1}\\ \mv{\widehat\ad}^{\iidx-1}\\
    \end{smallmatrix}\right]\right)
\end{equation}
where the Jacobian $\mv f'\Bigl(\Bigl[\begin{smallmatrix}
    \mv y\\\mv{\widehat\ad}
\end{smallmatrix}\Bigr]\Bigr) =$
\begin{equation} \label{eq:paraopt:tc:jac}
    \left[\begin{smallarray}{cccc|cccc}
        I & & & & -\mathcal P_\ad(\mv{y_\mathrm{init}}, \mv{\widehat\ad}_1)\\
        -\mathcal P_y(\mv y_1, \mv{\widehat\ad}_2) & I & & & & -\mathcal P_\ad(\mv y_1, \mv{\widehat\ad}_2)\\
        &\ddots&\ddots&&&&\ddots\\
        && -\mathcal P_y(\mv y_{\Tidx-1},\mv{\widehat\ad}_\Tidx) & I &&&&-\mathcal P_\ad(\mv y_{\Tidx-1}, \mv{\widehat\ad}_\Tidx)\\
        \cmidrule(lr){1-4}\cmidrule(lr){5-8}
        -\mathcal Q_y(\mv y_1, \mv{\widehat\ad}_2) &&&& I & -Q_\ad(\mv y_1, \mv{\widehat\ad}_2)\\
        &\ddots&&&&\ddots&\ddots\\
        &&-\mathcal Q_y(\mv y_{\Tidx-1}, \mv{\widehat\ad}_\Tidx)&&&&I&-\mathcal Q_\ad(\mv y_{\Tidx-1},\mv{\widehat\ad}_\Tidx)\\
        &&&-\frac{\partial\widehat Q}{\partial\mv y_{\widehat\Tidx}}&&&&I\\
    \end{smallarray}\right]
\end{equation}
with $\cdot_y\coloneqq\frac{\partial\cdot}{\partial\mv{y}}$ and $\cdot_\ad\coloneqq\frac{\partial\cdot}{\partial\mv{{\widehat\ad}}}$. These derivative terms are expensive to compute; this sparks interest in an \emph{inexact Newton iteration}, which only requires an approximate Jacobian. To this end, the authors in \cite{ganderPARAOPTPararealAlgorithm2020a} replace $\mathcal P_y$, $\mathcal P_\ad$, $\mathcal Q_y$, and $\mathcal Q_\ad$ by derivatives of coarse, approximate propagators $\tilde{\mathcal P}$ and $\tilde{\mathcal Q}$. This results in an approximate Jacobian
\begin{equation} \label{eq:conv:setting:ftilde}
    \mv{\tilde f}' \approx \mv f'    \mper
\end{equation}
The ParaOpt algorithm, then, is to approximate \cref{eq:paraopt:tc:newton} -- in each iteration of the Newton procedure -- by using an inner iterative solver such as \textsc{gmres} that uses the coarse propagators $\tilde{\mathcal P}$ and $\tilde{\mathcal Q}$ to evaluate multiplications by an approximate Jacobian $\mv{\tilde f}'$. Such multiplication is embarrassingly paralleli\sz{}able, since each element in the resulting vector can be calculated independently.

\section{ParaOpt convergence} \label{sec:conv}
    This section treats theoretical convergence results about ParaOpt, building on the results in \cite{ganderPARAOPTPararealAlgorithm2020a}. \Cref{sec:conv:setting} introduces the linear diffusive setting and the simplifications it allows us to make. \Cref{sec:conv:conv} then proves new, generali\sz{}ed bounds for both tracking and terminal-cost objectives.

    \subsection{The linear diffusive setting} \label{sec:conv:setting}
    The rest of the paper considers the case of a linear $\mv g(\mv y(t)) \eqqcolon -K\mv y(t)$. The current section also assumes a symmetric matrix $K=K^\trsp\in\mathbb R^{\Sidx\times\Sidx}$ whose eigenvalues $\sigma$ are all positive (making the equation dissipative). An equation meeting these three requirements will be called \emph{linear diffusive}. Related to each property, an assumption is imposed on the propagators $\mathcal P$, $\mathcal Q$, $\tilde{\mathcal P}$, and $\tilde{\mathcal Q}$.

\begin{assumption} \label{ass:conv:setting:1}
    The propagators are affine; that is, that there exist matrices $\Phi_\mathcal P$, $\Phi_\mathcal Q$, $\Psi_\mathcal P$, $\Psi_\mathcal Q$, $\tilde\Phi_\mathcal P$, $\tilde\Phi_\mathcal Q$, $\tilde\Psi_\mathcal P$, and $\tilde\Psi_\mathcal Q$ independent of $\tidx$ such that,\begin{subequations} \label{eq:conv:setting:lin}
        \begin{align}
            {\mathcal P}(\mv y_{\tidx-1}, \mv{\widehat\ad}_\tidx) &= \Phi_\mathcal P \mv y_{\tidx-1} - \Psi_\mathcal P\mv{\widehat\ad}_\tidx + \mv{ b}_{\mathcal P,\tidx}    \quad \text{for some $\mv b_{\mathcal P,\tidx}$}\mcom\\
            {\mathcal Q}(\mv y_{\tidx-1}, \mv{\widehat\ad}_\tidx) &= \Psi_\mathcal Q \mv y_{\tidx-1} + \Phi_\mathcal Q\mv{\widehat\ad}_\tidx + \mv{ b}_{\mathcal Q,\tidx}    \quad \text{for some $\mv b_{\mathcal Q,\tidx}$}\mcom\\
            \label{eq:conv:setting:lin:Ptilde} \tilde{\mathcal P}(\mv y_{\tidx-1}, \mv{\widehat\ad}_\tidx) &= \tilde\Phi_\mathcal P \mv y_{\tidx-1} - \tilde\Psi_\mathcal P\mv{\widehat\ad}_\tidx + \mv{\tilde b}_{\mathcal P,\tidx}\quad \text{for some $\mv{\tilde b}_{\mathcal P,\tidx}$}    \mcom\\
            \label{eq:conv:setting:lin:Qtilde} \tilde{\mathcal Q}(\mv y_{\tidx-1}, \mv{\widehat\ad}_\tidx) &= \tilde\Psi_\mathcal Q \mv y_{\tidx-1} + \tilde\Phi_\mathcal Q\mv{\widehat\ad}_\tidx + \mv{\tilde b}_{\mathcal Q,\tidx}\quad \text{for some $\mv{\tilde b}_{\mathcal Q,\tidx}$}    \mper
        \end{align}
    \end{subequations}
    This assumption is satisfied for many propagators when $\mv g$ in \cref{eq:intro:intro:optprob} is linear, satisfying
    \begin{equation}
        \mv g(\mv y(t)) \eqqcolon -K\mv y(t)    \mper
    \end{equation}
\end{assumption}
\begin{proof}[Implications]
If \cref{eq:conv:setting:lin} holds, $\mv f(\bigl[\begin{smallmatrix}
    \mv y\\\mv{\widehat\ad}
\end{smallmatrix}\bigr]) = A\bigl[\begin{smallmatrix}
    \mv y\\\mv{\widehat\ad}
\end{smallmatrix}\bigr] - \mv b = \mv0 \Leftrightarrow A\bigl[\begin{smallmatrix}
    \mv y\\\mv{\widehat\ad}
\end{smallmatrix}\bigr] = \mv b$ for some $\mv b$ and
\begin{equation}
    A = \left[\begin{smallarray}{cccc|cccc}
        I&&&&\Psi_\mathcal P\\
        -\Phi_\mathcal P & I & & & & \Psi_\mathcal P\\
        & \ddots & \ddots&&&&\ddots\\
        && -\Phi_\mathcal P & I &&&&\Psi_\mathcal P\\
        \cmidrule(lr){1-4}\cmidrule(lr){5-8}
         -\Psi_\mathcal Q &&&& I & -\Phi_\mathcal Q\\
        &\ddots&&&&\ddots&\ddots\\
        &&-\Psi_\mathcal Q&&&&I&-\Phi_\mathcal Q\\
        &&&-\frac{\partial\widehat Q}{\partial\mv y_{\widehat\Tidx}}&&&&I\\
    \end{smallarray}\right]
\end{equation}
with $\widehat Q$ from \cref{eq:paraopt:paraopt:Qhat}. Recalling $\mv{\tilde f}'$ from \cref{eq:conv:setting:ftilde}, we have $\mv{\tilde f}'(\bigl[\begin{smallmatrix}
    \mv y\\\mv{\widehat\ad}
\end{smallmatrix}\bigr]) = \tilde A$, with $\tilde A$ defined analogously to $A$ (using the $\tilde\Phi$s and $\tilde\Psi$s). Then the ParaOpt Newton iteration becomes
\begin{equation} \label{eq:conv:setting:paraopt-it}
    \tilde A\begin{litmat}
        \mv y^\iidx - \mv y^{\iidx-1}\\\mv{\widehat\ad}^\iidx - \mv{\widehat\ad}^{\iidx-1}
    \end{litmat} = -A\begin{litmat}
        \mv y^{\iidx-1}\\\mv{\widehat\ad}^{\iidx-1}
    \end{litmat} + \mv b
\end{equation}
and thus, if we define $\mv x^\iidx\coloneqq[(\mv y^\iidx)^\trsp, (\mv{\widehat\ad}^\iidx)^\trsp]^\trsp$,
\begin{equation} \label{eq:conv:setting:paraopt-it-x}
    \mv x^\iidx = (I - \tilde A^{-1}A)\mv x^{\iidx-1}+\tilde A^{-1}\mv b    \mper
\end{equation}
To summari\sz{}e, instead of computing $\mv x = A^{-1}\mv b$ directly, ParaOpt applied to linear equations hopes to obtain a speed-up by using the iteration \cref{eq:conv:setting:paraopt-it-x}: a series of paralleli\sz{}able multiplications by $A$, combined with inverses of $\tilde A$ (which are cheaper, as they correspond to the coarse propagators). We call $S\coloneqq(I-\tilde A^{-1}A)$ the \emph{ParaOpt iteration matrix}, as it characteri\sz{}es the evolution of the error. Indeed, denote by $\mv e^\iidx\coloneqq \mv x^\iidx-A^{-1}\mv b$ the error on the solution at iteration $k$. Then \cref{eq:conv:setting:paraopt-it-x} means that
\begin{equation}
\begin{aligned}
    \mv e^\iidx &= (I-\tilde A^{-1}A)\mv x^{\iidx-1}+\tilde A^{-1}\mv b - A^{-1}\mv b\\
                &= \mv e^{\iidx-1}-(\tilde A^{-1}A\mv x^{\iidx-1}-\tilde A^{-1}\mv b) = S\mv e^{\iidx-1}    \mper
\end{aligned}
\end{equation}
Thus, ParaOpt converges if $S$'s eigenvalues are all smaller than $1$ in magnitude.
\end{proof}

\begin{assumption} \label{ass:conv:setting:2}
    We use $O$ to denote a zero matrix. It holds that
    \begin{equation}
    \begin{cases}
        \text{Tracking:} \quad & \Phi_\mathcal P = \Phi_\mathcal Q \eqqcolon \Phi \quad \text{and} \quad \Psi_\mathcal P = \Psi_\mathcal Q \eqqcolon \Psi    \mcom\\
        \text{Terminal cost:} \quad & \Phi_\mathcal P = \Phi_\mathcal Q \eqqcolon \Phi \quad \text{and} \quad \Psi_\mathcal Q = O, \Psi_\mathcal P \eqqcolon \Psi    \mcom
    \end{cases}
    \end{equation}
    and analogously for the coarse $\tilde\Phi$ and $\tilde\Psi$. In addition, $K$, $\Phi$, $\Psi$, $\tilde\Phi$, and $\tilde\Psi$ are simultaneously diagonali\sz{}able. We denote their corresponding eigenvalues by $\sigma$, $\varphi$, $\psi$, $\tilde\varphi$, and $\tilde\psi$. These assumptions are satisfied for many propagators when $K=K^\trsp$ is symmetric.
\end{assumption}
\begin{proof}[Implications]
    ParaOpt's convergence is characteri\sz{}ed by the \emph{convergence factor} $\rho$, which is the spectral radius of $S$. Using simultaneous diagonali\sz{}ability, a similarity transform allows decomposing $S$ into $\Sidx$ smaller matrices $S_\sigma$ -- one for each of $K$'s eigenvalues $\sigma$ -- whose combined eigenvalues give those of $S$ itself (see\ukus{}{,} e.g.\ukus{}{,}\ \cite{ganderPARAOPTPararealAlgorithm2020a}). Then
    \begin{equation}
        \label{eq:conv:setting:rhobound} \rho = \max_{\sigma\in\mathrm{eig}(K)} \max(\kern2pt\abs{\mathrm{eig}(S_\sigma)})    \mcom
    \end{equation}
    where the absolute value operates element-wise. For tracking, each $S_\sigma$ has the form
    \begin{subequations} \label{eq:conv:setting:Ssigma}
    \begin{align}
        \label{eq:conv:setting:Ssigma:tr} S_\sigma &= I - \left[\begin{smallarray}{cccc|cccc}
            1 &&&& \tilde\psi\\
            -\tilde\varphi & 1 &&&& \tilde\psi\\
            & \ddots & \ddots &&&& \ddots\\
            && -\tilde\varphi & 1 &&&& \tilde\psi\\
            \cmidrule(lr){1-4}\cmidrule(lr){5-8}
            -\tilde\psi &&&& 1 & -\tilde\varphi\\
            & \ddots &&&& \ddots & \ddots\\
            && -\tilde\psi &&&& 1 & -\tilde\varphi\\
            &&& -\tilde\psi &&&& 1
        \end{smallarray}\right]^{-1}\left[\begin{smallarray}{cccc|cccc}
            1 &&&& \mathrlap{\phantom{\tilde\psi}}\psi\\
            -\varphi & 1 &&&& \mathrlap{\phantom{\tilde\psi}}\psi\\
            & \ddots & \ddots &&&& \ddots\\
            && -\varphi & 1 &&&& \mathrlap{\phantom{\tilde\psi}}\psi\\
            \cmidrule(lr){1-4}\cmidrule(lr){5-8}
            -\mathrlap{\phantom{\tilde\psi}}\psi &&&& 1 & -\varphi\\
            & \ddots &&&& \ddots & \ddots\\
            && -\mathrlap{\phantom{\tilde\psi}}\psi &&&& 1 & -\varphi\\
            &&& -\mathrlap{\phantom{\tilde\psi}}\psi &&&& 1
        \end{smallarray}\right]
    \end{align}
    and for terminal cost
    \begin{align}
        \label{eq:conv:setting:Ssigma:tc} S_\sigma &= I - \left[\begin{smallarray}{cccc|cccc}
            1 &&&& \tilde\psi\\
            -\tilde\varphi & 1 &&&& \tilde\psi\\
            & \ddots & \ddots &&&& \ddots\\
            && -\tilde\varphi & 1 &&&& \tilde\psi\\
            \cmidrule(lr){1-4}\cmidrule(lr){5-8}
            &&&& 1 & -\tilde\varphi\\
            & &&&& \ddots & \ddots\\
            &&  &&&& 1 & -\tilde\varphi\\
            &&& -1 &&&& 1
        \end{smallarray}\right]^{-1}\left[\begin{smallarray}{cccc|cccc}
            1 &&&& \mathrlap{\phantom{\tilde\psi}}\psi\\
            -\varphi & 1 &&&& \mathrlap{\phantom{\tilde\psi}}\psi\\
            & \ddots & \ddots &&&& \ddots\\
            && -\varphi & 1 &&&& \mathrlap{\phantom{\tilde\psi}}\psi\\
            \cmidrule(lr){1-4}\cmidrule(lr){5-8}
            &&&& 1 & -\varphi\\
            & &&&& \ddots & \ddots\\
            &&  &&&& 1 & -\varphi\\
            &&& -1 &&&& 1
        \end{smallarray}\right]    \mper
    \end{align}
    \end{subequations}
    Here, $\varphi$, $\psi$, $\tilde\varphi$, and $\tilde\psi$ depend on the problem properties, the time discreti\sz{}ation and $K$'s eigenvalue $\sigma$. We detail them for some relevant time discreti\sz{}ations shortly.
\end{proof}

\begin{assumption} \label{ass:conv:setting:3}
    All $\varphi$, $\psi$, $\tilde\varphi$, and $\tilde\psi$ in \cref{ass:conv:setting:2} satisfy $0<\varphi,\tilde\varphi<1$ and $0<\psi,\tilde\psi$. This is satisfied for many propagators when $\sigma>0$ for all $\sigma\in\mathrm{eig}(K)$.
\end{assumption}

For any time step $\tau$, define rescalings of $\sigma$ from \cref{ass:conv:setting:2} and $\gamma$ from \cref{eq:intro:intro:obj}:
\begin{equation}
    \begin{cases}
        \text{Tracking:} & \widehat\sigma_\tau \coloneqq \tau\sigma \quad \text{and} \quad \widehat\gamma_\tau \coloneqq \tau/\sqrt\gamma    \mcom\\
        \text{Terminal cost:} & \widehat\sigma_\tau \coloneqq \tau\sigma \quad \text{and} \quad \widehat\gamma_\tau \coloneqq \tau/\gamma    \mper
    \end{cases}
\end{equation}
Recall that $\mathcal P$, $\mathcal Q$, $\tilde{\mathcal P}$, and $\tilde{\mathcal Q}$ propagate over a time interval $\DT$ and define $\widehat\sigma \coloneqq \widehat\sigma_\DT$ and $\widehat\gamma \coloneqq \widehat\gamma_\DT$ for conciseness, as is also shown in \cref{tab:paraopt:paraopt:hats}.

\begin{remark}[Optimi\sz{}ation and discreti\sz{}ation] \label{rem:conv:setting:optdisc}
    Optimal-control algorithms either target a discreti\sz{}ation of the optimal continuous solution (first-optimi\sz{}e-then-discreti\sz{}e -- \textsc{fotd}) or optimi\sz{}e the discreti\sz{}ed problem (\textsc{fdto}) \cite{hinzeOptimizationPDEConstraints2009b}. In general, these approaches are not equivalent. The terminal-cost implicit-Euler propagators in \cite{ganderPARAOPTPararealAlgorithm2020a} are based on \textsc{fdto}; we will consider both options here. For tracking, we only use \textsc{fotd}, as \textsc{fdto}-based tracking propagators do not satisfy \cref{ass:conv:setting:2}.
\end{remark}

We can now give some values of $\varphi$ and $\psi$ for various propagators and objectives.
\begin{lemma} \label{lmm:conv:setting:prop-tr-ie}
    For tracking objectives, (\textsc{fotd}) implicit Euler with $J$ length-$\tau$ steps (i.e.\ukus{}{,} $\DT = J\tau$) applied to a linear diffusive problem satisfies \cref{ass:conv:setting:1,ass:conv:setting:2,ass:conv:setting:3} with
    \begin{equation} \label{eq:lmm:conv:setting:prop-tr-ie:1}
        \varphi = \varphi_\tau^{(J)} \quad \text{and} \quad \psi = \psi_\tau^{(J)}
    \end{equation}
    where $\varphi_\tau^{(0)}=1$ and $\psi_\tau^{(0)}=0$ and, with $\zeta \coloneqq (1+\sigma_\tau)$, we have the recursion
    \begin{equation} \label{eq:lmm:conv:setting:prop-tr-ie:2}
        \varphi_\tau^{(j+1)} = \varphi_\tau^{(j)}(\zeta^{-1}-\widehat\gamma_\tau(\psi_\tau^{(j+1)}-\widehat\gamma_\tau\zeta^{-1})) \text{ and } \psi_\tau^{(j+1)} = \frac{\widehat\gamma_\tau+\zeta^{-1}(1+\widehat\gamma_\tau^2)\psi_\tau^{(j)}}{\zeta+\widehat\gamma_\tau\psi_\tau^{(j)}}    \mper
    \end{equation}
\end{lemma}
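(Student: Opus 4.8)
The plan is to reduce \cref{eq:intro:intro:optsys} to scalar two-point boundary-value problems, discretize them, and then induct on the number of fine steps $J$.

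Since $K=K^\trsp$, simultaneous diagonalization of $K$ decouples the tracking optimality system into $\Sidx$ scalar two-point problems, one per eigenvalue $\sigma$. On a sub-interval $[T_{\tidx-1},T_\tidx]$, the rescaling $\widehat\ad=\ad/\sqrt\gamma$ (see \cref{tab:paraopt:paraopt:hats}) renders the scalar system as the symmetric pair $y'=-\sigma y-\widehat\ad/\sqrt\gamma$ and $\widehat\ad'=\sigma\widehat\ad+(y_{\mathrm d}-y)/\sqrt\gamma$, with $y$ prescribed at $T_{\tidx-1}$ and $\widehat\ad$ at $T_\tidx$. Writing $Y_j\approx y(t_j)$, $\Lambda_j\approx\widehat\ad(t_j)$ on the fine grid $t_0=T_{\tidx-1},\dots,t_J=T_\tidx$ and applying implicit Euler in the \textsc{fotd} sense — the state equation forward, the adjoint equation backward, each with the right-hand side evaluated at the new node — yields, with $\zeta=1+\sigma_\tau$, the recurrences $\zeta Y_{j+1}=Y_j-\widehat\gamma_\tau\Lambda_{j+1}$ and $\zeta\Lambda_j=\Lambda_{j+1}+\widehat\gamma_\tau Y_j$ (up to an affine term in $y_{\mathrm d}$). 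These make $\mathcal P$ and $\mathcal Q$ affine; moreover $\Phi_\mathcal P=\Phi_\mathcal Q$ and $\Psi_\mathcal P=\Psi_\mathcal Q$ by the forward--backward symmetry of the discrete pair, and $\Phi,\Psi,\tilde\Phi,\tilde\Psi$ are matrix functions of $K$, hence share its eigenvectors. This establishes \cref{ass:conv:setting:1,ass:conv:setting:2}, and it remains to compute the scalar coefficients $\varphi,\psi$ determined by $Y_J=\varphi Y_0-\psi\Lambda_J+\mathrm{const}$ (equivalently $\Lambda_0=\psi Y_0+\varphi\Lambda_J+\mathrm{const}$, with the same $\varphi,\psi$).

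Write $\varphi_\tau^{(j)},\psi_\tau^{(j)}$ for these coefficients of the analogous $j$-step problem and induct on $j$. For $j=0$ the sub-interval is degenerate, so $Y_0=Y_0$ and $\Lambda_0=\Lambda_0$, giving $\varphi_\tau^{(0)}=1$, $\psi_\tau^{(0)}=0$. For $j\to j+1$, split the $(j+1)$-step problem on $[t_0,t_{j+1}]$ into the $j$-step problem on $[t_0,t_j]$ — whose solution operator is $(\varphi_\tau^{(j)},\psi_\tau^{(j)})$ by the induction hypothesis — and the single remaining step on $[t_j,t_{j+1}]$. Substituting $Y_j=\varphi_\tau^{(j)}Y_0-\psi_\tau^{(j)}\Lambda_j+\mathrm{const}$ into the last backward update $\zeta\Lambda_j=\Lambda_{j+1}+\widehat\gamma_\tau Y_j+\mathrm{const}$ and solving the resulting $2\times2$ system for $(Y_j,\Lambda_j)$ in terms of $(Y_0,\Lambda_{j+1})$ gives $Y_j=\zeta\varphi_\tau^{(j)}(\zeta+\widehat\gamma_\tau\psi_\tau^{(j)})^{-1}Y_0-\psi_\tau^{(j)}(\zeta+\widehat\gamma_\tau\psi_\tau^{(j)})^{-1}\Lambda_{j+1}+\mathrm{const}$; inserting this into $\zeta Y_{j+1}=Y_j-\widehat\gamma_\tau\Lambda_{j+1}$ and reading off coefficients yields $\varphi_\tau^{(j+1)}=\varphi_\tau^{(j)}(\zeta+\widehat\gamma_\tau\psi_\tau^{(j)})^{-1}$ together with, after collecting terms, the expression for $\psi_\tau^{(j+1)}$ in \cref{eq:lmm:conv:setting:prop-tr-ie:2}. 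A brief manipulation shows $(\zeta+\widehat\gamma_\tau\psi_\tau^{(j)})^{-1}=\zeta^{-1}-\widehat\gamma_\tau(\psi_\tau^{(j+1)}-\widehat\gamma_\tau\zeta^{-1})$, putting $\varphi_\tau^{(j+1)}$ into the stated form; the $\Lambda_0$ relation produces the same pair $\varphi_\tau^{(j+1)},\psi_\tau^{(j+1)}$ by symmetry.

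Finally, \cref{ass:conv:setting:3} follows by a second, easier induction using $\zeta=1+\sigma_\tau>1$ and $\widehat\gamma_\tau>0$ for $\sigma>0$: from $\psi_\tau^{(0)}=0$ and the recursion — whose numerator $\widehat\gamma_\tau+\zeta^{-1}(1+\widehat\gamma_\tau^2)\psi_\tau^{(j)}$ and denominator $\zeta+\widehat\gamma_\tau\psi_\tau^{(j)}$ are positive once $\psi_\tau^{(j)}\ge0$ — one gets $\psi_\tau^{(j)}>0$ for $j\ge1$; since then $\zeta+\widehat\gamma_\tau\psi_\tau^{(j)}>1$, the relation $\varphi_\tau^{(j+1)}=\varphi_\tau^{(j)}(\zeta+\widehat\gamma_\tau\psi_\tau^{(j)})^{-1}$ with $\varphi_\tau^{(0)}=1$ forces $0<\varphi_\tau^{(j)}<1$ for $j\ge1$. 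The same bounds hold verbatim for the coarse $\tilde\varphi,\tilde\psi$, which obey the identical recursion with the coarse step size. I expect the delicate part to be the bookkeeping in the inductive step — aligning the indices of the forward and backward implicit-Euler updates with the \textsc{fotd} convention so that the $2\times2$ elimination reproduces \cref{eq:lmm:conv:setting:prop-tr-ie:2} precisely, and checking that the denominators $\zeta+\widehat\gamma_\tau\psi_\tau^{(j)}$ never vanish, which is exactly what the positivity induction supplies and what makes the Schur-complement elimination (equivalently, solvability of the discrete sub-problem) legitimate.
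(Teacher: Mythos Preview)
Your proposal is correct and follows essentially the same approach as the paper: reduce to the scalar two-point problem, write down the \textsc{fotd} implicit-Euler recurrences, and induct on $j$ via the relation $Y_j=\varphi_\tau^{(j)}Y_0-\psi_\tau^{(j)}\Lambda_j+\mathrm{const}$. Your derivation of the cleaner closed form $\varphi_\tau^{(j+1)}=\varphi_\tau^{(j)}(\zeta+\widehat\gamma_\tau\psi_\tau^{(j)})^{-1}$ before recasting it as \cref{eq:lmm:conv:setting:prop-tr-ie:2} is a nice touch, since it makes the \cref{ass:conv:setting:3} check immediate (the factor is manifestly in $(0,1)$ once $\psi_\tau^{(j)}\ge0$ and $\zeta>1$), whereas the paper verifies the equivalent bound $0<r^{(j)}<1$ through a longer chain of inequalities.
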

\begin{proof}
    This result is derived in \cref{sec:apdx-po-prop:prop-tr-ie}.
\end{proof}
\begin{lemma} \label{lmm:conv:setting:prop-tr-ex}
    For tracking objectives, exact solvers on a linear diffusive problem satisfy \cref{ass:conv:setting:1,ass:conv:setting:2,ass:conv:setting:3} with
    \begin{equation}
    \begin{gathered}
        \varphi = d^{-1} \quad \text{and} \quad \psi = -d^{-1}c    \mcom\\
        \text{where } c = -\widehat\gamma\frac{\sinh(\sqrt{\widehat\gamma^2 + \widehat\sigma^2})}{\sqrt{\widehat\gamma^2 + \widehat\sigma^2}} \text{ and } d = \cosh(\sqrt{\widehat\gamma^2 + \widehat\sigma^2}) + \widehat\sigma\frac{\sinh(\sqrt{\widehat\gamma^2 + \widehat\sigma^2})}{\sqrt{\widehat\gamma^2 + \widehat\sigma^2}}    \mper
    \end{gathered}
    \end{equation}
\end{lemma}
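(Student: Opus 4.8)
The plan is to evaluate the exact propagators on a single eigencomponent of $K$ and read off $\varphi$ and $\psi$ from the resulting $2\times2$ flow map. First I would insert the rescaled adjoint $\mv{\widehat\ad}=\mv\ad/\sqrt\gamma$ and $\mv g(\mv y)=-K\mv y$ into the tracking optimality system \cref{eq:intro:intro:optsys}; using $K=K^\trsp$ this becomes the (now symmetric) linear boundary value problem $\mv y' = -K\mv y - \mv{\widehat\ad}/\sqrt\gamma$, $\mv{\widehat\ad}' = K\mv{\widehat\ad} - \mv y/\sqrt\gamma + \mv{y_\mathrm d}/\sqrt\gamma$ on $[T_{\tidx-1},T_\tidx]$, with $\mv y(T_{\tidx-1})$ and $\mv{\widehat\ad}(T_\tidx)$ prescribed. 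Diagonalizing $K$ decouples this into $\Sidx$ scalar problems, and after rescaling time to $[0,1]$ the homogeneous part of the $\sigma$-component is driven by $N \coloneqq \left[\begin{smallmatrix}-\widehat\sigma & -\widehat\gamma\\ -\widehat\gamma & \widehat\sigma\end{smallmatrix}\right]$ (with $\widehat\sigma=\DT\sigma$, $\widehat\gamma=\DT/\sqrt\gamma$ from \cref{tab:paraopt:paraopt:hats}), while the $\mv{y_\mathrm d}$ term contributes only an affine offset -- which is exactly the structure in \cref{eq:conv:setting:lin} that establishes \cref{ass:conv:setting:1}.

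The key step is the matrix exponential: since $N^2=(\widehat\sigma^2+\widehat\gamma^2)I$, writing $\omega\coloneqq\sqrt{\widehat\gamma^2+\widehat\sigma^2}$ gives $\E^N = \cosh(\omega)\,I + \frac{\sinh(\omega)}{\omega}N \eqqcolon \left[\begin{smallmatrix} a & c\\ c & d\end{smallmatrix}\right]$, so $c=-\widehat\gamma\,\sinh(\omega)/\omega$ and $d=\cosh(\omega)+\widehat\sigma\,\sinh(\omega)/\omega$ are precisely the quantities in the statement. The propagators invert the scalar relation $\left[\begin{smallmatrix} y(1)\\ \widehat\ad(1)\end{smallmatrix}\right] = \left[\begin{smallmatrix} a & c\\ c & d\end{smallmatrix}\right]\left[\begin{smallmatrix} y(0)\\ \widehat\ad(0)\end{smallmatrix}\right]$ for the outputs $y(1)$ and $\widehat\ad(0)$ given the data $y(0)$, $\widehat\ad(1)$: the second row yields $\widehat\ad(0) = d^{-1}\widehat\ad(1) - d^{-1}c\,y(0)$, and substituting into the first row and using $ad-c^2 = \det\E^N = \E^{\mathrm{tr}\,N} = 1$ collapses the $y(0)$-coefficient to $(ad-c^2)/d = d^{-1}$, so $y(1) = d^{-1}y(0) + cd^{-1}\widehat\ad(1)$. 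Comparing with the affine forms of $\mathcal P$ and $\mathcal Q$ in \cref{eq:conv:setting:lin} reads off $\varphi_{\mathcal P}=\varphi_{\mathcal Q}=d^{-1}$ and $\psi_{\mathcal P}=\psi_{\mathcal Q}=-d^{-1}c$; this simultaneously confirms $\Phi_{\mathcal P}=\Phi_{\mathcal Q}$, $\Psi_{\mathcal P}=\Psi_{\mathcal Q}$ from \cref{ass:conv:setting:2} and gives the claimed formulas. Simultaneous diagonalizability is automatic, as $\E^N$ and $d^{-1}$ are matrix functions of $K$ (well-defined since $d>0$). Finally, for \cref{ass:conv:setting:3}: $\omega>0$ forces $d=\cosh\omega+\widehat\sigma\,\sinh\omega/\omega>1$, hence $0<\varphi=d^{-1}<1$, and $\psi=-d^{-1}c=d^{-1}\widehat\gamma\,\sinh(\omega)/\omega>0$.

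I do not expect a genuine obstacle; the whole argument is linear algebra. The care points are bookkeeping: threading the $\mv{y_\mathrm d}$ forcing consistently into the $\mv b$-offsets so that \cref{ass:conv:setting:1} is actually established, keeping the sign conventions of $\Psi_{\mathcal P}$ and $\Psi_{\mathcal Q}$ straight when extracting $\psi$, and noting that well-posedness of the subinterval BVP (hence of $\mathcal P$ and $\mathcal Q$) is exactly the condition $d\neq0$. The one genuinely useful identity is $\det\E^N = \E^{\mathrm{tr}\,N}=1$, which is what makes $\varphi=d^{-1}$ come out so cleanly; an alternative derivation would instead pass to the limit $\tau\to0$ ($J\to\infty$ with $J\tau=\DT$ fixed) in \cref{lmm:conv:setting:prop-tr-ie}, but the direct exponential computation is shorter. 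If the coarse propagators $\tilde{\mathcal P}$, $\tilde{\mathcal Q}$ are likewise taken exact, the same computation applies verbatim and gives $\tilde\varphi=d^{-1}$, $\tilde\psi=-d^{-1}c$.
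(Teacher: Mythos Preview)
Your proposal is correct and follows the same overall route as the paper: reduce to the scalar eigencomponent, write the end-to-start map on $[T_{\tidx-1},T_\tidx]$ as the exponential of the $2\times2$ matrix $N=\bigl[\begin{smallmatrix}-\widehat\sigma&-\widehat\gamma\\-\widehat\gamma&\widehat\sigma\end{smallmatrix}\bigr]$, read off $c$ and $d$, and then check \cref{ass:conv:setting:3}. Two of your computational choices are cleaner than the paper's, and worth keeping. First, you compute $\E^N$ via the identity $N^2=(\widehat\sigma^2+\widehat\gamma^2)I$, which collapses the power series to $\cosh(\omega)I+\omega^{-1}\sinh(\omega)\,N$ in one line; the paper instead writes out an explicit eigendecomposition $N=V\Sigma V^{-1}$ and multiplies it back out. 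Second, you use $\det\E^N=\E^{\mathrm{tr}\,N}=1$ to confirm directly that the $\mathcal P$-coefficient is also $d^{-1}$, whereas the paper only extracts $\varphi$ and $\psi$ from the $\mathcal Q$ row and appeals to the symmetry built into \cref{ass:conv:setting:2}. Neither shortcut changes the argument's substance, but both tighten it.
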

\begin{proof}
    This result is derived in \cref{sec:apdx-po-prop:prop-tr-ex}.
\end{proof}
\begin{lemma} \label{lmm:conv:setting:prop-tc-ie-fdto}
    For terminal cost, \textsc{fdto} implicit Euler with $J$ length-$\tau$ steps (i.e.\ukus{}{,} $\DT = J\tau$) applied to a linear diffusive problem satisfies \cref{ass:conv:setting:1,ass:conv:setting:2,ass:conv:setting:3} with
    \begin{equation}
        \varphi = (1+\sigma\tau)^{-J} \quad \text{and} \quad \psi = \frac{1-\varphi^2}{\gamma\sigma(2+\sigma\tau)}    \mper
    \end{equation}
\end{lemma}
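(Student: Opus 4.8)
The plan is to reduce to a scalar recursion, write out the first-discretize-then-optimize (\textsc{fdto}) discrete optimality system for $J$ implicit-Euler steps on one sub-interval, solve that two-term recursion, and read off the coefficients against \cref{eq:conv:setting:lin}. For the scalar reduction: the implicit-Euler step operator for the state equation $\mv y' = -K\mv y + \mv u$ is $(I+\tau K)^{-1}$, and every matrix arising when forming the discrete optimality system and eliminating the control is a rational function of $K$, so all of $\Phi_{\mathcal P},\Psi_{\mathcal P},\Phi_{\mathcal Q},\Psi_{\mathcal Q}$ commute with $K$. Diagonalizing $K = K^\top$ orthogonally thus decouples the construction into one scalar problem per eigenvalue $\sigma > 0$ of $K$ and, simultaneously, establishes the simultaneous-diagonalizability part of \cref{ass:conv:setting:2}. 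Fix such a $\sigma$ and set $\zeta \coloneqq 1+\sigma\tau > 1$.

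Next I would write down the discrete optimality system. Discretizing the scalar state equation $y' = -\sigma y + u$ by implicit Euler on the fine grid $n = 0,\dots,J$ (with $\Delta T = J\tau$) and the control cost $\tfrac\gamma2\int u^2$ by the matching quadrature, one forms the discrete Lagrangian and sets its partial derivatives to zero. This yields the state recursion $\zeta\,y^n = y^{n-1} + \tau u^n$, a \emph{homogeneous} discrete adjoint recursion for the Lagrange multipliers (homogeneous precisely because the terminal-cost objective carries no running state penalty), and a gradient equation expressing $u^n$ through the multipliers. The one step that needs real care — and the step I expect to be the main obstacle — is the index bookkeeping that identifies the ParaOpt adjoint state $\lambda^n \approx \lambda(T_{l-1}+n\tau)$ with the multipliers: the discrete terminal condition pins this down as the costate equalling the multiplier at the \emph{next} time level, so that $u^n$ couples to $\lambda^{n-1}$; with the naive identification the resulting $\psi$ is wrong by a factor $\zeta$. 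After this the system reads $\zeta\,y^n = y^{n-1} - (\tau/\gamma)\lambda^{n-1}$ together with $\lambda^{n-1} = \zeta^{-1}\lambda^n$.

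Finally I would solve and collect terms. The adjoint recursion decouples from the state, so with the input value $\lambda^J = \widehat\lambda_l$ at $T_l$ we get $\lambda^n = \zeta^{-(J-n)}\widehat\lambda_l$; in particular the backward propagator returns $\mathcal Q(y_{l-1},\widehat\lambda_l) = \lambda^0 = \zeta^{-J}\widehat\lambda_l$, giving $\Phi_{\mathcal Q} = \zeta^{-J}$ and $\Psi_{\mathcal Q} = O$. Substituting $\lambda^{n-1} = \zeta^{-(J-n+1)}\widehat\lambda_l$ into the state recursion and unrolling from $y^0 = y_{l-1}$ gives $\mathcal P(y_{l-1},\widehat\lambda_l) = y^J = \zeta^{-J} y_{l-1} - (\tau/\gamma)\bigl(\sum_{m=1}^J \zeta^{-2m}\bigr)\widehat\lambda_l$, and the geometric sum equals $(\zeta^2-1)^{-1}(1-\zeta^{-2J})$. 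This verifies \cref{ass:conv:setting:1} (the propagators are affine — in fact linear, since a terminal-cost sub-interval carries no source term, so the $\mv b$'s vanish) and the structure $\Phi_{\mathcal P} = \Phi_{\mathcal Q} =: \varphi$, $\Psi_{\mathcal Q} = O$, $\Psi_{\mathcal P} =: \psi$ of \cref{ass:conv:setting:2}, with $\varphi = \zeta^{-J} = (1+\sigma\tau)^{-J}$; using $\zeta^2 - 1 = (\zeta-1)(\zeta+1) = \sigma\tau(2+\sigma\tau)$ turns the coefficient of $\widehat\lambda_l$ into $\psi = \frac{1-\varphi^2}{\gamma\sigma(2+\sigma\tau)}$. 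The identical computation with $J$ replaced by the coarse step count yields $\tilde\varphi,\tilde\psi$. Then \cref{ass:conv:setting:3} is immediate: $\zeta > 1$ forces $\varphi \in (0,1)$, hence $1-\varphi^2 \in (0,1)$, while $\gamma\sigma(2+\sigma\tau) > 0$, so $\psi > 0$.
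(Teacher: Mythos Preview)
Your derivation is correct and complete. The paper itself does not prove this lemma at all: it simply cites \cite[(3.16)--(3.17)]{ganderPARAOPTPararealAlgorithm2020a} for the formulas, so you have in fact supplied the argument that the paper outsources --- the scalar reduction, the \textsc{fdto} optimality system with its decoupled homogeneous adjoint recursion, and the geometric sum $\sum_{m=1}^J\zeta^{-2m}=(1-\varphi^2)/(\zeta^2-1)$ are exactly how that reference obtains the result.
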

\begin{proof}
    This result is found in \cite[(3.16)--(3.17)]{ganderPARAOPTPararealAlgorithm2020a}.
\end{proof}
\begin{lemma} \label{lmm:conv:setting:prop-tc-ie-fotd}
    For terminal cost, \textsc{fotd} implicit Euler with $J$ length-$\tau$ steps (i.e.\ukus{}{,} $\DT = J\tau$) applied to a linear diffusive problem satisfies \cref{ass:conv:setting:1,ass:conv:setting:2,ass:conv:setting:3} with
    \begin{equation} \label{eq:lmm:conv:setting:prop-tc-ie-fotd}
        \varphi = (1+\sigma\tau)^{-J} \quad \text{and} \quad \psi = \frac{(1-\varphi_\tau^2)(1+\widehat\sigma_\tau)}{\gamma\sigma(2+\sigma\tau)}    \mper
    \end{equation}
\end{lemma}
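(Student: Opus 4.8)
The plan is to diagonalise $K$ and reduce the statement to two scalar one-step recursions. Since $\mv g(\mv y)=-K\mv y$ is linear and $K=K^\trsp$, the terminal-cost optimality system \cref{eq:intro:intro:optsys} reads $\mv y'=-K\mv y-\mv\ad/\gamma$ and $\mv\ad'=K\mv\ad$, and the structural point that makes the argument tractable is that the adjoint equation is \emph{decoupled} from $\mv y$ (because $\mv g'\equiv -K$ is constant). Writing $K=V\diag(\sigma)V^\trsp$ and passing to the basis $V$, it suffices to treat one scalar mode $y'=-\sigma y-\ad/\gamma$, $\ad'=\sigma\ad$ on a sub-interval of length $\DT=J\tau$, with $y$ given at the left end ($=y_{\tidx-1}$) and $\ad$ given at the right end ($=\widehat\ad_\tidx$, since $\widehat\ad=\ad$ for terminal cost). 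Because all the propagator matrices produced are then functions of $K$, simultaneous diagonalisability in \cref{ass:conv:setting:2} is automatic and \cref{ass:conv:setting:1} holds with vanishing $\mv b$-terms (the dynamics carry no forcing); what remains is to compute $\varphi$, $\psi$ and to verify the remaining structure and sign conditions.

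I would then discretise both equations by implicit Euler. On the backward adjoint equation this gives $(1+\sigma\tau)\ad_{j-1}=\ad_j$ per step, hence $\ad_0=(1+\sigma\tau)^{-J}\ad_J$; thus the adjoint propagator has $\Phi_{\mathcal Q}=(1+\sigma\tau)^{-J}$ and $\Psi_{\mathcal Q}=0$, which is exactly the terminal-cost structure of \cref{ass:conv:setting:2} and fixes $\varphi=(1+\sigma\tau)^{-J}$. On the forward state equation implicit Euler evaluates the whole right-hand side at the new node, giving $(1+\sigma\tau)\,y_{j+1}=y_j-(\tau/\gamma)\,\ad_{j+1}$ for $j=0,\dots,J-1$. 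Substituting the closed form $\ad_{j+1}=(1+\sigma\tau)^{-(J-j-1)}\ad_J$ and unrolling from $y_0=y_{\tidx-1}$ gives \[ y_J=\varphi\,y_0-\frac{\tau}{\gamma}\,\ad_J\sum_{m=0}^{J-1}(1+\sigma\tau)^{-(2m+1)} , \] and summing the geometric series -- using $(1+\sigma\tau)^2-1=\sigma\tau(2+\sigma\tau)$ -- collapses this to $y_J=\varphi\,y_0-\psi\,\ad_J$ with $\psi$ the expression in \cref{eq:lmm:conv:setting:prop-tc-ie-fotd}. Hence $\Phi_{\mathcal P}=\varphi$, $\Psi_{\mathcal P}=\psi$, and in particular $\Phi_{\mathcal P}=\Phi_{\mathcal Q}$, so \cref{ass:conv:setting:2} holds; and $\sigma>0$, $\tau>0$ give $0<\varphi=(1+\sigma\tau)^{-J}<1$, so $1-\varphi^2\in(0,1)$ and every factor of $\psi$ is positive, giving $\psi>0$, which is \cref{ass:conv:setting:3}. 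The identical computation with the coarse step size and step count produces $\tilde\varphi$, $\tilde\psi$ of the same form and with the same bounds.

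I expect the only delicate point to be the index bookkeeping in the two implicit-Euler updates -- in particular that the state update couples to $\ad_{j+1}$ rather than to $\ad_j$ -- since this is precisely where the \textsc{fotd} discretisation differs from the \textsc{fdto} one of \cref{lmm:conv:setting:prop-tc-ie-fdto}, and it is what produces the extra factor $(1+\widehat\sigma_\tau)$ in $\psi$ relative to that lemma. Once the two recursions and the geometric sum are set up correctly, the remainder is routine algebra.
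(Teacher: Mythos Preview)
Your argument is correct and complete: reducing to the scalar mode, solving the decoupled adjoint recursion $(1+\sigma\tau)\ad_{j-1}=\ad_j$ in closed form, substituting into the forward state recursion, and summing the resulting geometric series gives exactly \cref{eq:lmm:conv:setting:prop-tc-ie-fotd}; the sign checks for \cref{ass:conv:setting:3} are immediate.

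The paper does not spell out a proof but points to the argument for \cref{lmm:conv:setting:prop-tr-ie} in \cref{sec:apdx-po-prop:prop-tr-ie}, which proceeds by an inductive recursion on the pair $(\varphi_\tau^{(j)},\psi_\tau^{(j)})$. Your route is genuinely different: because the terminal-cost adjoint equation is decoupled from $y$, you can solve it outright and then treat the state equation as a forced linear recursion with a known source, collapsing everything to one geometric sum. This is more direct than adapting the coupled-recursion machinery of the tracking case, and it also makes transparent \emph{why} the \textsc{fotd} $\psi$ differs from the \textsc{fdto} one in \cref{lmm:conv:setting:prop-tc-ie-fdto} by exactly the factor $(1+\widehat\sigma_\tau)$: the coupling to $\ad_{j+1}$ rather than $\ad_j$ shifts the geometric sum by one power of $(1+\sigma\tau)^{-1}$. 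The recursive approach would of course work too, but would obscure this structural point.
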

\begin{proof}
    This result can be derived analogously to \cref{lmm:conv:setting:prop-tr-ie}.
\end{proof}
\begin{lemma} \label{lmm:conv:setting:prop-tc-ex}
    For terminal cost, exact solvers on a linear diffusive problem satisfy \cref{ass:conv:setting:1,ass:conv:setting:2,ass:conv:setting:3} with
    \begin{equation} \label{eq:lmm:conv:setting:prop-tc-ex}
        \varphi = d^{-1} \quad \text{and} \quad \psi = -d^{-1}b, \quad \text{where} \quad b=-\widehat\gamma\frac{\sinh\widehat\sigma}{\widehat\sigma} \quad \text{and} \quad d = \exp\widehat\sigma    \mper
    \end{equation}
\end{lemma}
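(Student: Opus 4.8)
The plan is to diagonalise $K$ once and for all, so that (by \cref{ass:conv:setting:2}) everything reduces to the scalar optimality system in a single eigendirection, with $K$ replaced by its eigenvalue $\sigma>0$ and the interior equations homogeneous. After translating time so that the sub-interval becomes $[0,\DT]$, and writing $\widehat\ad=\ad$ for terminal cost, the exact solver must solve $y'(s)=-\sigma y(s)-\ad(s)/\gamma$ with $y(0)=y_{\tidx-1}$ together with $\ad'(s)=\sigma\ad(s)$ with $\ad(\DT)=\ad_\tidx$. The key structural observation is that the adjoint equation is autonomous, so I would integrate it first to get $\ad(s)=\ad_\tidx\exp(\sigma(s-\DT))$. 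Evaluating at $s=0$ gives $\mathcal Q(y_{\tidx-1},\ad_\tidx)=\exp(-\widehat\sigma)\,\ad_\tidx$, which already shows $\Psi_\mathcal Q=O$ (matching \cref{ass:conv:setting:2}) and $\varphi=\exp(-\widehat\sigma)=d^{-1}$.

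Next I would substitute this closed form of $\ad(s)$ into the state equation and solve the resulting inhomogeneous linear first-order \textsc{ode} with the integrating factor $\exp(\sigma s)$ and the initial value $y(0)=y_{\tidx-1}$. Evaluating at $s=\DT$ produces $\mathcal P(y_{\tidx-1},\ad_\tidx)=\exp(-\widehat\sigma)\,y_{\tidx-1}-\tfrac{1-\exp(-2\widehat\sigma)}{2\gamma\sigma}\,\ad_\tidx$. It then remains only to cast the $\ad_\tidx$-coefficient into the form $-\psi$ demanded by \cref{eq:conv:setting:lin}: using $\widehat\sigma=\DT\sigma$ and (for terminal cost) $\widehat\gamma=\DT/\gamma$, hence $1/(\gamma\sigma)=\widehat\gamma/\widehat\sigma$, together with the identity $1-\exp(-2\widehat\sigma)=2\exp(-\widehat\sigma)\sinh\widehat\sigma$, this coefficient equals $-\exp(-\widehat\sigma)\,\widehat\gamma\,\sinh(\widehat\sigma)/\widehat\sigma=-d^{-1}b$ with $b$ and $d$ as stated, so $\psi=-d^{-1}b$.

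Finally I would check the three structural assumptions: \cref{ass:conv:setting:1} because the equations are linear (so $\mathcal P$, $\mathcal Q$ are affine, here even with zero inhomogeneous part); \cref{ass:conv:setting:2} because $\Phi_\mathcal P=\Phi_\mathcal Q=\exp(-\DT K)$, $\Psi_\mathcal Q=O$, and all of these are analytic functions of the symmetric $K$, hence simultaneously diagonali\sz{}able; and \cref{ass:conv:setting:3} because $\widehat\sigma>0$ forces $\varphi=\exp(-\widehat\sigma)\in(0,1)$ and $\psi=\exp(-\widehat\sigma)\widehat\gamma\sinh(\widehat\sigma)/\widehat\sigma>0$. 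I do not expect a genuine obstacle here: the \textsc{ode} integration is elementary precisely because the backward equation decouples from the forward one, and the only places to be careful are the terminal-cost rescaling ($\widehat\gamma$ uses $\gamma$, not $\sqrt\gamma$) and the sign convention $-\Psi_\mathcal P$ in \cref{eq:conv:setting:lin}. As an independent cross-check, the same $\varphi$ and $\psi$ are recovered in the limit $J\to\infty$ with $\tau=\DT/J$ of \cref{lmm:conv:setting:prop-tc-ie-fotd}.
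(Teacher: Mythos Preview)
Your proof is correct and follows essentially the same route as the paper: reduce to the scalar eigenmode and integrate the linear optimality system on $[0,\DT]$ exactly, then read off $\varphi$ and $\psi$ and verify the assumptions. The only cosmetic difference is that the paper's cross-reference to \cref{lmm:conv:setting:prop-tr-ex} would suggest computing the $2\times2$ matrix exponential, whereas you integrate the (decoupled) adjoint and state equations sequentially; for the triangular terminal-cost system these are the same computation.
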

\begin{proof}
    This result can be derived analogously to \cref{lmm:conv:setting:prop-tr-ex}.
\end{proof}

    \subsection{Convergence results} \label{sec:conv:conv}
    Terminal-cost ParaOpt was proposed in \cite{ganderPARAOPTPararealAlgorithm2020a}, which includes a convergence bound for the case where both the fine and the coarse propagators use \textsc{fdto} implicit Euler. We propose alternative bounds $\rho^*$ on ParaOpt's convergence factor $\rho$ that are generic in the propagators used. \Cref{thm:conv:conv:tr-gen,thm:conv:conv:tc-gen} treat tracking and terminal cost, respectively.

\begin{theorem} \label{thm:conv:conv:tr-gen}
   When tracking ParaOpt is applied to a linear diffusive equation and \cref{ass:conv:setting:1,ass:conv:setting:2,ass:conv:setting:3} hold, the convergence factor $\rho$ satisfies
    \begin{equation} \label{eq:thm:conv:conv:tr-gen:bound}
        \rho < \rho^* \coloneqq \max_{\sigma\in\mathrm{eig}(K)}\sqrt{\frac{(\tilde\varphi - \varphi)^2 + (\tilde\psi - \psi)^2}{(1-\tilde\varphi)^2 + \tilde\psi^2}}    \mper
    \end{equation}
\end{theorem}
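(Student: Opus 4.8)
The plan is to build on the reduction already carried out under \cref{ass:conv:setting:2}: by \cref{eq:conv:setting:rhobound} it suffices to fix an eigenvalue $\sigma$ of $K$, show that every eigenvalue $\mu$ of the matrix $S_\sigma$ from \cref{eq:conv:setting:Ssigma:tr} satisfies
\begin{equation*}
    \abs\mu < \rho_\sigma^* \coloneqq \sqrt{\frac{(\tilde\varphi-\varphi)^2+(\tilde\psi-\psi)^2}{(1-\tilde\varphi)^2+\tilde\psi^2}}\mcom
\end{equation*}
and then take the maximum over $\sigma$. Writing $S_\sigma=\tilde A_\sigma^{-1}(\tilde A_\sigma-A_\sigma)$ and reading off from \cref{eq:conv:setting:Ssigma:tr}
\begin{equation*}
    A_\sigma = \begin{litmat} I-\varphi N & \psi I\\ -\psi I & I-\varphi N^\trsp\end{litmat} \qquad \tilde A_\sigma = \begin{litmat} I-\tilde\varphi N & \tilde\psi I\\ -\tilde\psi I & I-\tilde\varphi N^\trsp\end{litmat}\mcom
\end{equation*}
with $N$ the nilpotent $\widehat\Tidx\times\widehat\Tidx$ first-subdiagonal shift, one first checks that $\tilde A_\sigma$ is invertible: a block elimination gives $\det\tilde A_\sigma=\det\bigl((I-\tilde\varphi N)(I-\tilde\varphi N)^\trsp+\tilde\psi^2 I\bigr)>0$, the latter matrix being positive definite since $\tilde\psi>0$ by \cref{ass:conv:setting:3}. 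Then $\mu\in\mathrm{eig}(S_\sigma)$ precisely when $A_\sigma\mv v=(1-\mu)\tilde A_\sigma\mv v$ for some $\mv v=[\mv p^\trsp,\mv q^\trsp]^\trsp\neq\mv0$.

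Second, I would reduce this to a scalar characteristic equation. Expanding the two block rows of $A_\sigma\mv v=(1-\mu)\tilde A_\sigma\mv v$ and writing $a\coloneqq\varphi-(1-\mu)\tilde\varphi$, $b\coloneqq(1-\mu)\tilde\psi-\psi$, they become $(\mu I-aN)\mv p=b\mv q$ and $(\mu I-aN^\trsp)\mv q=-b\mv p$. When $b\neq0$ -- the finitely many $\mu$ with $b=0$ are treated separately, and one checks none of them reaches $\rho_\sigma^*$ -- eliminating $\mv q$ shows that $\mv p\neq\mv0$ lies in the kernel of the tridiagonal Toeplitz matrix $(\mu^2+a^2+b^2)I-a\mu(N+N^\trsp)$ modified by subtracting an extra $a^2$ from its last diagonal entry. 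The standard Chebyshev-type three-term recursion then evaluates its determinant in closed form as $\bigl(r_+^{\widehat\Tidx}(r_+-a^2)-r_-^{\widehat\Tidx}(r_--a^2)\bigr)/(r_+-r_-)$, where $r_\pm$ are the roots of $x^2-(\mu^2+a^2+b^2)x+a^2\mu^2$. Hence $\mu$ is an eigenvalue of $S_\sigma$ exactly when $(r_+/r_-)^{\widehat\Tidx}=(r_--a^2)/(r_+-a^2)$, which I would analyze together with the identity $(r_+-a^2)(r_--a^2)=-a^2b^2$, obtained by evaluating the quadratic at $x=a^2$.

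Third -- the crux -- I would prove that every root $\mu$ of this characteristic equation obeys $\abs\mu<\rho_\sigma^*$. Note that $\rho_\sigma^*$ is exactly the spectral radius of the $2\times2$ matrix obtained from $\tilde A_\sigma^{-1}(\tilde A_\sigma-A_\sigma)$ by formally replacing $N$ and $N^\trsp$ by the scalar $1$ (the constant Fourier mode); this signals that this mode is the worst case and that the bound is only approached as $\widehat\Tidx\to\infty$. For real $\mu$ there is a clean estimate: the Gram matrix of the $\mu$-independent matrix $\left[\begin{smallmatrix}aN & bI\\ -bI & aN^\trsp\end{smallmatrix}\right]$ equals $(a^2+b^2)I$ minus $a^2$ times a $0/1$ diagonal matrix supported at two indices, so an eigenvector calculation yields $\mu^2\le a^2+b^2=(\varphi-(1-\mu)\tilde\varphi)^2+((1-\mu)\tilde\psi-\psi)^2$, strict once one rules out (by a short cascade argument, using $\mu\neq0$) eigenvectors vanishing at those two indices; combined with the characteristic equation -- a leading-order count of which shows no root has large modulus -- and the sign ranges $0<\varphi,\tilde\varphi<1$, $\psi,\tilde\psi>0$ of \cref{ass:conv:setting:3}, this confines $\abs\mu$ strictly below $\rho_\sigma^*$. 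For complex $\mu$ I would work with $t\coloneqq r_+/r_-$: from $t^{\widehat\Tidx}(r_+-a^2)=r_--a^2$, the identity $(r_+-a^2)(r_--a^2)=-a^2b^2$, and the parameter ranges, one bounds $\abs t$ so that $\abs\mu=\rho_\sigma^*$ would force $\abs{r_+}=\abs{r_-}$, which a genuine finite power $t^{\widehat\Tidx}$ excludes.

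Finally, maximizing the per-$\sigma$ bound over $\sigma\in\mathrm{eig}(K)$ gives $\rho<\rho^*$, which is \cref{eq:thm:conv:conv:tr-gen:bound}. The main obstacle is clearly the third step: distilling the clean closed form $\rho_\sigma^*$ -- uniformly in $\widehat\Tidx$ and, crucially, with a \emph{strict} inequality -- out of the rather unwieldy characteristic equation, while also disposing of the complex-$\mu$ case and the degenerate $b=0$ case. The reduction, the block elimination, and the determinant recursion of the first two steps are all routine.
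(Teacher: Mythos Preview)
Your reduction and the block structure are correct, and the tridiagonal determinant recursion is a legitimate way to parametrize the spectrum. The gap is in the third step, and it is not merely that this step is hard --- the strategy you outline does not actually reach $\rho_\sigma^*$.

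For real $\mu$, your Gram-matrix argument gives $\mu^2\le a^2+b^2$ with $a=\varphi-(1-\mu)\tilde\varphi$ and $b=(1-\mu)\tilde\psi-\psi$. But $a,b$ depend on $\mu$, so this is the \emph{implicit} quadratic inequality
\[
    (1-\tilde\varphi^2-\tilde\psi^2)\,\mu^2
      - 2\mu\bigl[\tilde\varphi(\varphi-\tilde\varphi)+\tilde\psi(\psi-\tilde\psi)\bigr]
      - \bigl[(\varphi-\tilde\varphi)^2+(\psi-\tilde\psi)^2\bigr]\le 0\mper
\]
Its roots are \emph{not} $\pm\rho_\sigma^*$ (the denominator of $\rho_\sigma^{*2}$ is $(1-\tilde\varphi)^2+\tilde\psi^2$, not $1-\tilde\varphi^2-\tilde\psi^2$), and whenever $\tilde\varphi^2+\tilde\psi^2\ge 1$ --- which \cref{ass:conv:setting:3} does not exclude --- the leading coefficient is non-positive and the inequality fails to bound $\mu$ from above at all. ``Combined with the characteristic equation'' is not a rescue: you would be back to analyzing the full determinant expression, and nothing in your outline indicates how $\rho_\sigma^*$ would emerge from it. The complex case is even vaguer: the chain ``bound $\abs t$ $\Rightarrow$ $\abs\mu=\rho_\sigma^*$ forces $\abs{r_+}=\abs{r_-}$ $\Rightarrow$ contradiction'' is an aspiration, not an argument.

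The paper avoids both the real/complex split and the characteristic polynomial. Writing the eigenvector as $(\mv v^\trsp,\mv w^\trsp)^\trsp$, eliminate $\mv w$ and left-multiply by $\mv v^*$ to obtain a quadratic $a\theta^2-b\theta+c=0$ in $\theta=1-\mu$ whose coefficients are Rayleigh quotients of \emph{fixed} matrices (e.g.\ $a=\mv v^*(\tilde B^\trsp\tilde B+\tilde\psi^2 I)\mv v$), hence independent of $\theta$. A one-line Cauchy--Schwarz argument then gives $b^2\le 4ac$ \emph{always}, so $\abs\mu^2=\abs{1-\theta}^2=1-b/a+c/a$ exactly, for every eigenvalue --- no case split. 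This equals a ratio $\mv v^*M_1\mv v/\mv v^*M_2\mv v$ of fixed symmetric forms; bounding the numerator by $(\tilde\varphi-\varphi)^2+(\tilde\psi-\psi)^2$ and the denominator strictly below by $(1-\tilde\varphi)^2+\tilde\psi^2$ (via an explicit eigenvalue estimate for the tridiagonal $M_2$, using $0<\tilde\varphi<1$) yields the strict inequality $\abs\mu<\rho_\sigma^*$ directly. The discriminant lemma is the idea your plan is missing.
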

\begin{proof}
    \Cref{sec:proof-tr} is dedicated to proving this result.
\end{proof}

\clearpage

The $\varphi$, $\psi$, $\tilde\varphi$, and $\tilde\psi$ values for a specific set of propagators can be filled in to the general bound given in \cref{thm:conv:conv:tr-gen}. We give an example.
\begin{example} \label{ex:conv:conv:special}
    Consider tracking ParaOpt applied to a linear diffusive equation with an exact fine propagator and a one-step implicit-Euler coarse one. Then we can combine the results from \cref{lmm:conv:setting:prop-tr-ex,lmm:conv:setting:prop-tr-ie} with \cref{eq:thm:conv:conv:tr-gen:bound} to bound $\rho$. In fact, in this specific case, we can prove that
    \begin{equation} \label{eq:conv:conv:special}
        \rho < \rho^* < 1
    \end{equation}
    for any $\sigma$, $\gamma$, and $\DT$, meaning that ParaOpt does not diverge. The bound \cref{eq:conv:conv:special} is proven in \cref{sec:apdx-po-prop:special}.
\end{example}

\begin{theorem} \label{thm:conv:conv:tc-gen}
    When terminal-cost ParaOpt is applied to a linear diffusive equation and \cref{ass:conv:setting:1,ass:conv:setting:2,ass:conv:setting:3} hold, the convergence factor $\rho$ satisfies
    \begin{equation} \label{eq:thm:conv:conv:tc-gen:bound}
        \rho \le \rho^* \coloneqq \max_{\sigma\in\mathrm{eig}(K)}\max\left(\frac{\abs{\varphi-\tilde\varphi}}{1-\tilde\varphi}, x^*\right)    \mcom
    \end{equation}
    where $x^*$ is the root of $f_\infty(x) \coloneqq \frac{\tilde\psi-\psi}{\tilde\psi+1/\sum_{\tidx=0}^\infty(\tilde\varphi+\frac{\varphi-\tilde\varphi}x)^{2\tidx}}-x$ with the largest magnitude.
\end{theorem}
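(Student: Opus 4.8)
The plan is to make the spectral characterisation in \cref{eq:conv:setting:rhobound} explicit by computing the eigenvalues of each block $S_\sigma$ in \cref{eq:conv:setting:Ssigma:tc} and then estimating their moduli. Write $S_\sigma = I - \tilde M^{-1}M$, with $M$ and $\tilde M$ the two $2\widehat\Tidx\times2\widehat\Tidx$ matrices in \cref{eq:conv:setting:Ssigma:tc} (each consisting of a lower-bidiagonal $(1,1)$ block, an upper-bidiagonal $(2,2)$ block, a diagonal $\psi$- resp.\ $\tilde\psi$-coupling in the $(1,2)$ block, and $-\mv e_{\widehat\Tidx}\mv e_{\widehat\Tidx}^\trsp$ in the $(2,1)$ block), and set $h(\mu)\coloneqq\tilde\varphi\mu+(\varphi-\tilde\varphi)$, $\beta(\mu)\coloneqq(1-\mu)\tilde\psi-\psi$ and $g(\mu)\coloneqq h(\mu)/\mu=\tilde\varphi+(\varphi-\tilde\varphi)/\mu$.

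First I would compute the characteristic polynomial. A scalar $\mu\neq0$ is an eigenvalue of $S_\sigma$ iff $\det\bigl((1-\mu)\tilde M-M\bigr)=0$. For $\mu\neq0$ the $(2,2)$ block of $(1-\mu)\tilde M-M$ is an invertible bidiagonal matrix with constant diagonal $-\mu$ and constant superdiagonal $h(\mu)$, and its inverse is a geometric upper-triangular Toeplitz matrix; forming the corresponding Schur complement therefore changes only the last column of the lower-bidiagonal $(1,1)$ block, and that column is itself geometric in $g(\mu)$. Expanding the resulting determinant along that column, I expect
\[
    \det\bigl((1-\mu)\tilde M-M\bigr)=\mu\Bigl(\mu^{2\widehat\Tidx-1}-\beta(\mu)\textstyle\sum_{\tidx=0}^{\widehat\Tidx-1}h(\mu)^{2\tidx}\mu^{2(\widehat\Tidx-1-\tidx)}\Bigr).
\]
Dividing by $\mu^{2\widehat\Tidx-1}$ and using $\widehat\Tidx=\Tidx$ (\cref{tab:paraopt:paraopt:hats}), this says precisely that the nonzero eigenvalues of $S_\sigma$ are the zeros of $f_{\widehat\Tidx}$, the function obtained from $f_\infty$ by truncating the series at $\tidx=\widehat\Tidx-1$; together with $\mu=0$ these exhaust $\mathrm{eig}(S_\sigma)$, so it suffices to bound $\abs\mu$ for each zero $\mu$ of $f_{\widehat\Tidx}$.

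If $\abs{g(\mu)}\ge1$, then since $0<\tilde\varphi<1$ (\cref{ass:conv:setting:3}) the triangle inequality forces $\abs{\varphi-\tilde\varphi}/\abs\mu\ge1-\tilde\varphi$, i.e.\ $\abs\mu\le\abs{\varphi-\tilde\varphi}/(1-\tilde\varphi)\le\rho^*$, and we are done. Assume therefore $\abs{g(\mu)}<1$. Then, telescoping the geometric sum $\sum_{\tidx=0}^{\widehat\Tidx-1}h(\mu)^{2\tidx}\mu^{2(\widehat\Tidx-1-\tidx)}=(\mu^{2\widehat\Tidx}-h(\mu)^{2\widehat\Tidx})/(\mu^2-h(\mu)^2)$, the eigenvalue equation becomes $p_\infty(\mu)=-\mu\,\beta(\mu)\,g(\mu)^{2\widehat\Tidx}$, where $p_\infty(\mu)\coloneqq\mu^2-\beta(\mu)\mu-h(\mu)^2$ is exactly the polynomial obtained by clearing denominators in $f_\infty(\mu)=0$. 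Thus $\mu$ is a zero of the fixed quadratic $p_\infty$ perturbed by the term $-\mu\beta(\mu)g(\mu)^{2\widehat\Tidx}$, which is small because $\abs{g(\mu)}<1$ uniformly in $\widehat\Tidx$. Both zeros of $p_\infty$ have modulus at most $\rho^*$: any zero lying in the domain of $f_\infty$ is a root of $f_\infty$, hence of modulus $\le x^*$, and any zero outside that domain has $\abs g\ge1$, hence modulus $\le\abs{\varphi-\tilde\varphi}/(1-\tilde\varphi)$ by the same triangle-inequality bound. A perturbation argument — a Rouché-type zero count on the polynomial $\mu^{2\widehat\Tidx-1}p_\infty(\mu)+\beta(\mu)h(\mu)^{2\widehat\Tidx}$, whose zeros are those of $f_{\widehat\Tidx}$ together with the two ``spurious'' points $\mu=\pm h(\mu)$, i.e.\ $(\varphi-\tilde\varphi)/(1-\tilde\varphi)$ and $-(\varphi-\tilde\varphi)/(1+\tilde\varphi)$ — should then place all zeros of $f_{\widehat\Tidx}$ in the disk of radius $\rho^*$. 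Taking the maximum over $\sigma\in\mathrm{eig}(K)$ in \cref{eq:conv:setting:rhobound} gives \cref{eq:thm:conv:conv:tc-gen:bound}.

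The hard part is this last perturbation argument in the regime $\abs{g(\mu)}<1$. The zeros of $f_{\widehat\Tidx}$ are in general complex, so no monotonicity of a real function is available; the sign of $\tilde\psi-\psi$ flips the direction of the natural estimates; and, since $\rho^*$ is essentially the $\widehat\Tidx\to\infty$ limiting value, the estimate must be uniform in $\widehat\Tidx$, whereas the crude Rouché bound $r^{2\widehat\Tidx-1}\abs{p_\infty(\mu)}>\abs{\beta(\mu)}\abs{h(\mu)}^{2\widehat\Tidx}$ on the circle $\abs\mu=r$ degenerates exactly at the critical radius, where $p_\infty$ vanishes. I would handle this by a case split on $\operatorname{sign}(\tilde\psi-\psi)$, by factoring the zeros of $p_\infty$ out explicitly and tracking the spurious points, and, in the subcase where $\varphi\neq\tilde\varphi$ prevents the real-root analysis of \cite{ganderPARAOPTPararealAlgorithm2020a} from transferring verbatim, by supplementing it with an argument-principle count to exclude a larger-modulus complex zero.
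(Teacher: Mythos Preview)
Your determinant reduction is sound: the nonzero spectrum of $S_\sigma$ really coincides with the zeros of (the complex extension of) $f_{\widehat\Tidx}$, and the case $\abs{g(\mu)}\ge1$ is disposed of correctly by the triangle inequality. The gap is precisely where you put it. In the regime $\abs{g(\mu)}<1$ you need every \emph{complex} zero of $f_{\widehat\Tidx}$ to lie in the closed disk of radius $\rho^*$, and none of the tools you list actually delivers this. A Rouch\'e comparison on $\abs\mu=\rho^*$ fails because $p_\infty$ vanishes there; factoring the two roots of $p_\infty$ out only trades one degenerate comparison for another; and an argument-principle count still needs a contour on which one side dominates, which you do not exhibit. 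There is also a definitional mismatch you glide over: the theorem's $x^*$ is the largest-modulus \emph{real} root of $f_\infty$ (the discussion after \cref{thm:conv:conv:tc-gen} treats $f_\infty$ as a real function of a real variable), whereas $p_\infty$ can have complex roots with $\abs{g}<1$ that are not roots of $f_\infty$ in this sense, so even the preliminary claim ``every root of $p_\infty$ has modulus $\le\rho^*$'' is not yet justified.

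The paper avoids all of this by splitting on real versus non-real eigenvalues rather than on $\abs{g(\mu)}$. For a \emph{non-real} eigenvalue it repeats the quadratic-in-$\theta$ manipulation from the tracking proof (\cref{lmm:proof-tr:1-theta2}); the discriminant is now not automatically nonpositive, but if the eigenvalue is non-real it must be, and one again gets the Rayleigh-quotient identity $\abs{1-\theta}^2=\mv v^*(B-\tilde B)^\trsp(B-\tilde B)\mv v\big/\mv v^*(\tilde B^\trsp\tilde B+\tilde\psi E)\mv v$, which the $M_1,M_2$ estimates from \cref{sec:proof-tr} bound by $\bigl(\abs{\varphi-\tilde\varphi}/(1-\tilde\varphi)\bigr)^2$. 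For a \emph{real} eigenvalue $x=1-\theta$ the paper derives $f_{\widehat\Tidx}(x)=0$ (as you do) and then sandwiches: with $g(x)=(\tilde\psi-\psi)/\tilde\psi-x$ one has $f_{\widehat\Tidx}\le f_\infty\le g$ or the reverse, depending on $\mathrm{sign}(\tilde\psi-\psi)$, because the partial sums of a series with nonnegative terms are monotone. Bolzano then traps a root of $f_\infty$ between any root of $f_{\widehat\Tidx}$ and the root of $g$, giving $\abs{x}\le x^*$. Both ingredients---the pointwise inequality between $f_{\widehat\Tidx}$ and $f_\infty$, and the intermediate-value theorem---require the variable to be real; that is exactly what the real/non-real dichotomy provides and what your $\abs{g}$-based split gives up.
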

\begin{proof}
    \Cref{sec:proof-tc} is dedicated to proving this result.
\end{proof}
Note that, while its definition is implicit, $x^*$ in \cref{thm:conv:conv:tc-gen} is efficiently computable. This is elaborated upon at the end of \cref{sec:proof-tc}.

    \subsection{Interpreting the convergence results} \label{sec:conv:interp}
    The $\rho$ bounds \cref{eq:thm:conv:conv:tc-gen:bound,eq:thm:conv:conv:tr-gen:bound} are generic in the propagators used (under \cref{ass:conv:setting:1,ass:conv:setting:2,ass:conv:setting:3}) and independent of $\widehat\Tidx$. The latter property not only results in efficiently computable bounds (as opposed to calculating eigenvalues of potentially large matrices), but also ensures that the number of ParaOpt iterations stays constant when increasing $T$ together with $\widehat\Tidx$. We return to this scaling in \cref{sec:diag:scale}.

\begin{figure}
    \begin{subfigure}[b]{.4\textwidth}
        \includegraphics[width=\textwidth]{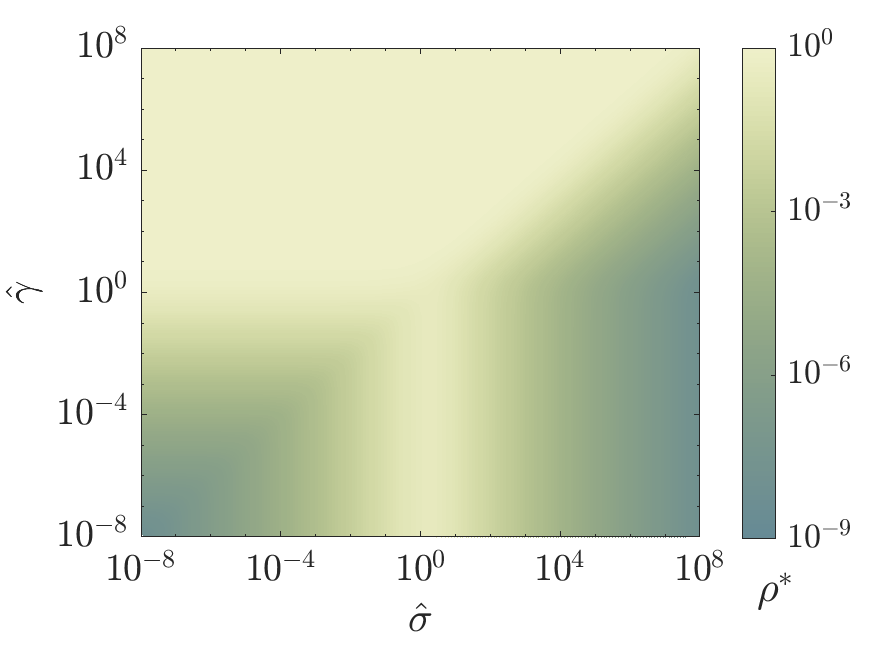}
        \caption{
            Tracking, $J=1$
            \vspace{-.4cm}
        }
        \label{fig:conv:interp:rhostar:tr-1}
    \end{subfigure}
    \hfill
    \begin{subfigure}[b]{.4\textwidth}
        \includegraphics[width=\textwidth]{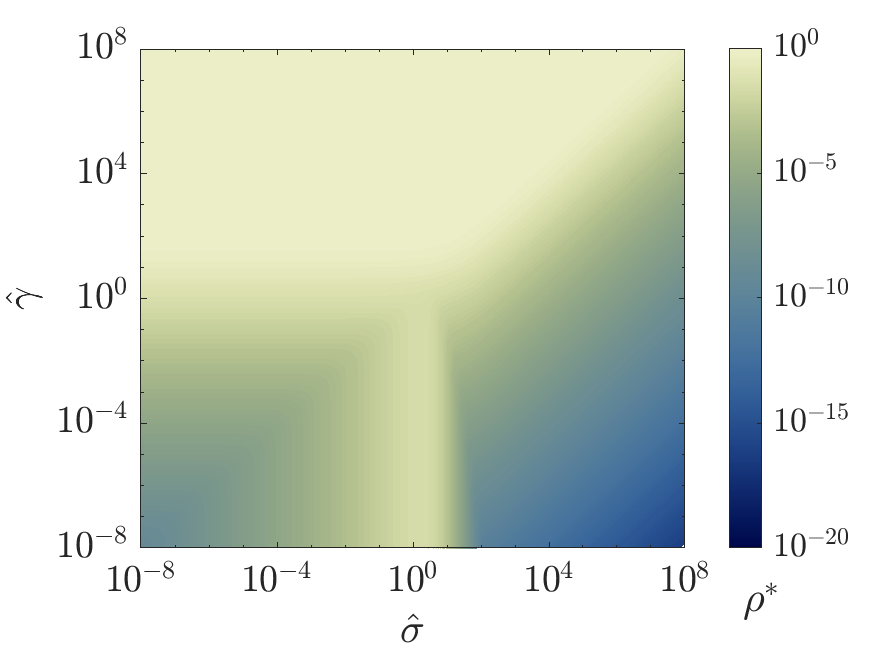}
        \caption{
            Tracking, $J=10$
            \vspace{-.4cm}
        }
        \label{fig:conv:interp:rhostar:tr-10}
    \end{subfigure}
    \vfill
    \begin{subfigure}[b]{.4\textwidth}
        \includegraphics[width=\textwidth]{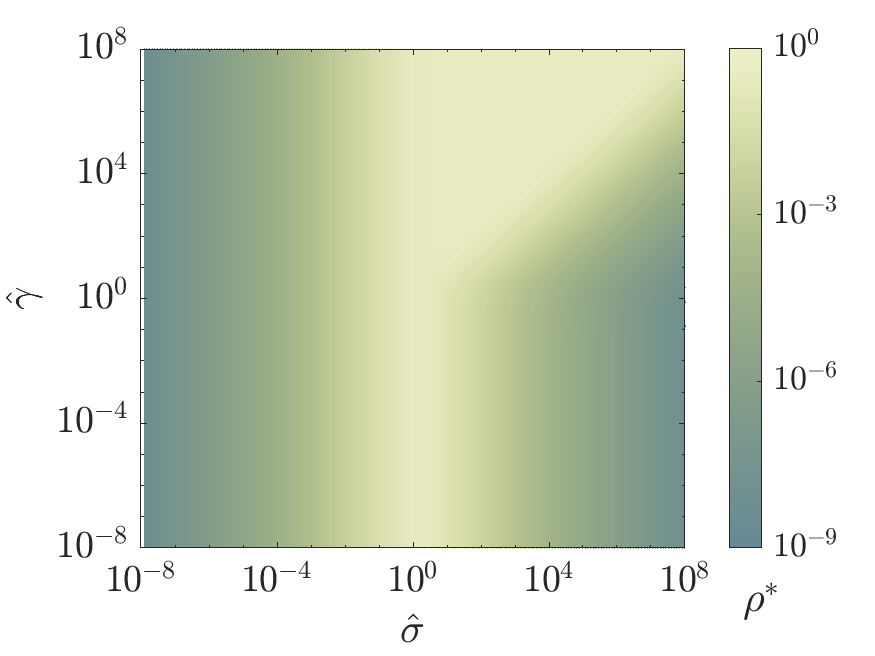}
        \caption{
            Terminal cost, \textsc{fotd}, $J=1$
            \vspace{-.4cm}
        }
        \label{fig:conv:interp:rhostar:tc-fotd-1}
    \end{subfigure}
    \hfill
    \begin{subfigure}[b]{.4\textwidth}
        \includegraphics[width=\textwidth]{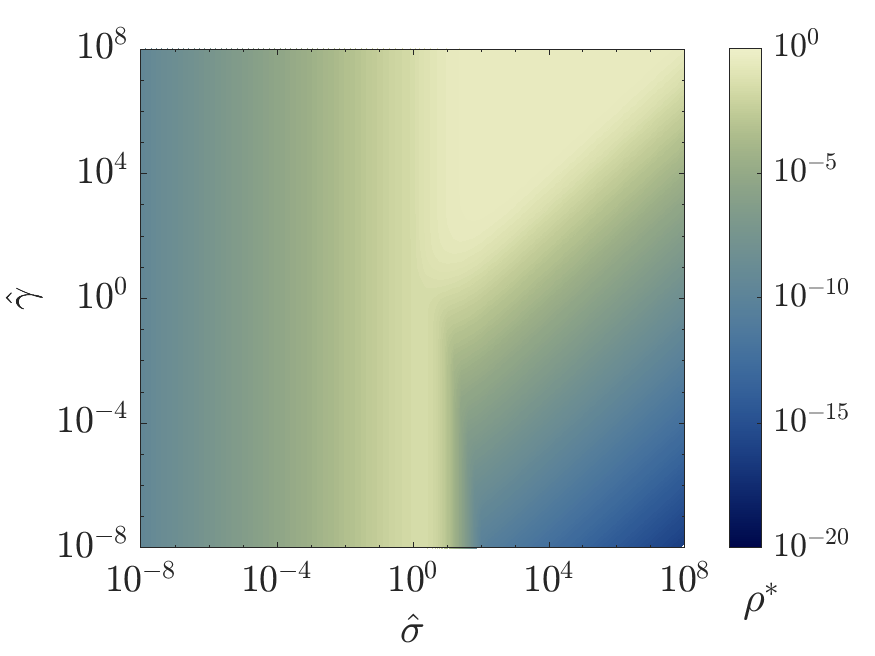}
        \caption{
            Terminal cost, \textsc{fotd}, $J=10$
            \vspace{-.4cm}
        }
        \label{fig:conv:interp:rhostar:tc-fotd-10}
    \end{subfigure}
    \vfill
    \begin{subfigure}[b]{.4\textwidth}
        \includegraphics[width=\textwidth]{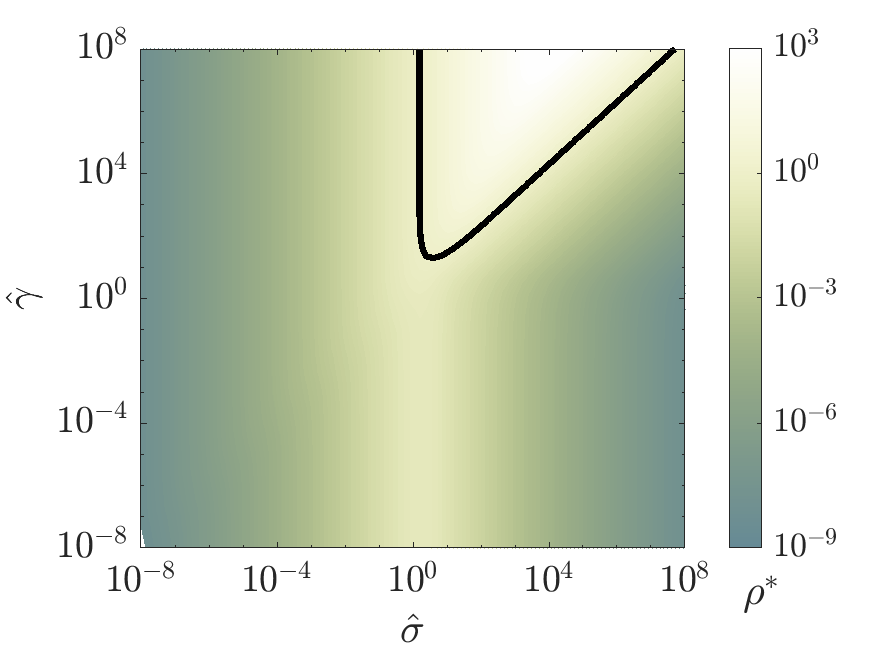}
        \caption{
            Terminal cost, \textsc{fdto}, $J=1$
            \vspace{-.5cm}
        }
        \label{fig:conv:interp:rhostar:tc-fdto-1}
    \end{subfigure}
    \hfill
    \begin{subfigure}[b]{.4\textwidth}
        \includegraphics[width=\textwidth]{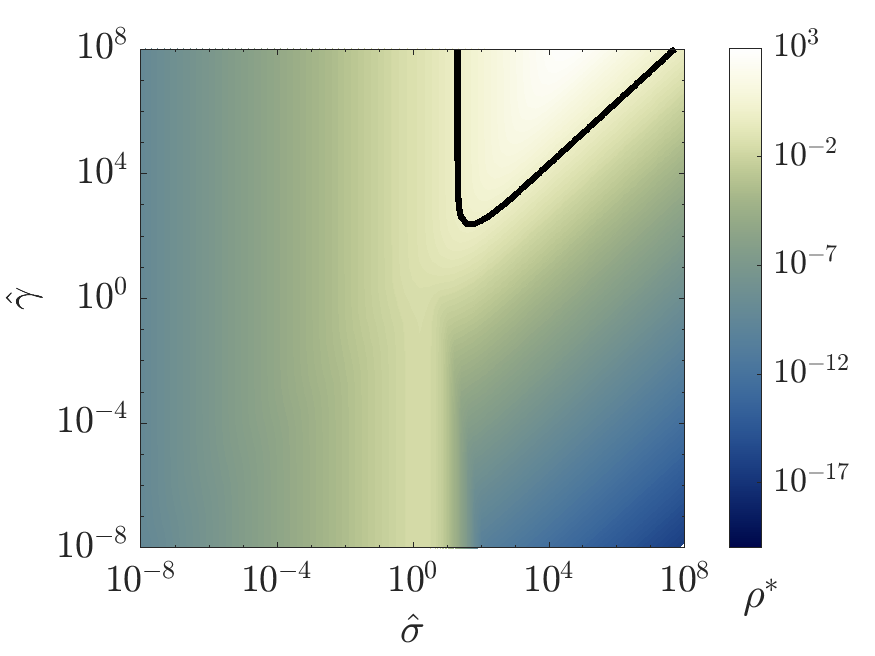}
        \caption{
            Terminal cost, \textsc{fdto}, $J=10$
            \vspace{-.5cm}
        }
        \label{fig:conv:interp:rhostar:tc-fdto-10}
    \end{subfigure}
    \caption{
        The bound \cref{eq:thm:conv:conv:tr-gen:bound} or \cref{eq:thm:conv:conv:tc-gen:bound} on ParaOpt's convergence factor $\rho^*$ is shown, with an exact fine and a $J$-step implicit-Euler coarse propagator. Recall that $\rho^* < 1$ guarantees convergence. The black contour lines mark $\rho^*=1$.
        \vspace{-.9cm}
    }
    \label{fig:conv:interp:rhostar}
\end{figure}
\begin{figure}[b]
    \begin{minipage}{.4\textwidth}
        \includegraphics[width=\textwidth]{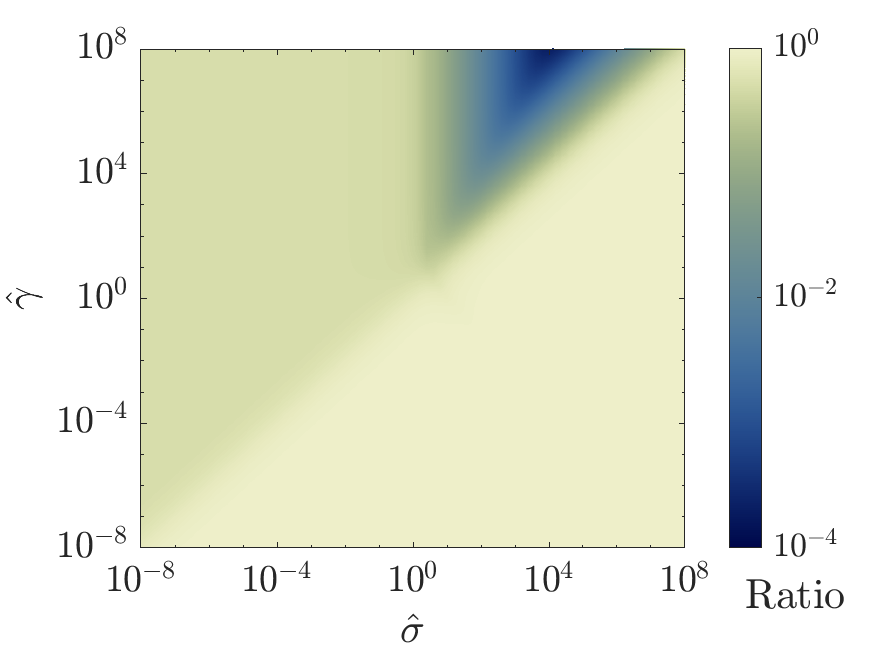}
        \caption{
            Ratio of the $\rho^*$ values in \cref{fig:conv:interp:rhostar:tc-fotd-1,fig:conv:interp:rhostar:tc-fdto-1}
        }
        \label{fig:conv:interp:rhostar:fdtovsfotd}
    \end{minipage}
    \hfill
    \begin{minipage}{.4\textwidth}
        \includegraphics[width=\textwidth]{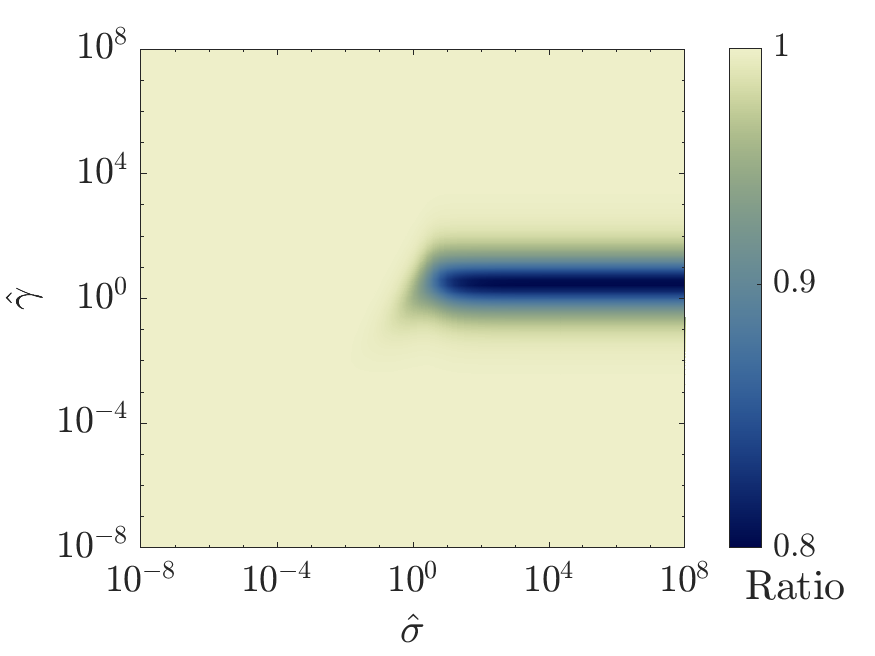}
        \caption{
            Ratio of $\rho^*$ in \cref{fig:conv:interp:rhostar:tc-fdto-1} with that of the bound in \cite{ganderPARAOPTPararealAlgorithm2020a}
        }
        \label{fig:conv:interp:rhostar:origvsnew}
    \end{minipage}
\end{figure}

To investigate specific propagators, their $\varphi$ and $\psi$ can be calculated as a function of the problem parameters $\sigma$, $\gamma$, and $\tau$. \Cref{sec:apdx-po-prop:phipsi} goes into more detail about this procedure. A bound on $\rho$ can then be obtained by filling $\varphi$ and $\psi$ into \cref{eq:thm:conv:conv:tc-gen:bound,eq:thm:conv:conv:tr-gen:bound}. Let us do so for an exact fine propagator (or, equivalently, a numerical propagator in the limit for infinitely small time steps) and an implicit-Euler coarse propagator. Recall from \cref{rem:conv:setting:optdisc} that, in the case of terminal cost, two implicit-Euler discreti\sz{}ations are possible: either first-discreti\sz{}e-then-optimi\sz{}e (as \cite{ganderPARAOPTPararealAlgorithm2020a} uses) or the other way around. We will compare both techniques.

\Cref{fig:conv:interp:rhostar} shows $\rho^*$ as a function of the problem parameters $\widehat\sigma$ and $\widehat\gamma$. Recall that $\rho^*$ is an upper bound on ParaOpt's convergence factor; $\rho^*<1$ means guaranteed convergence and, as $\rho^*$ decreases further, this convergence accelerates. In the tracking case, \cref{fig:conv:interp:rhostar:tr-1} confirms the result \cref{eq:conv:conv:special}, which guarantees $\rho$ never exceeds $1$. For terminal cost, \cref{fig:conv:interp:rhostar:tc-fotd-1,fig:conv:interp:rhostar:tc-fotd-10} show the bound \cref{eq:thm:conv:conv:tc-gen:bound} for \textsc{fotd} implicit Euler; \cref{fig:conv:interp:rhostar:tc-fdto-1,fig:conv:interp:rhostar:tc-fdto-10} concern \textsc{fdto}. With an exact fine propagator, it is clear that our \textsc{fotd} coarse propagators never cause divergence for linear diffusive problems, while those based on \textsc{fdto} might. \Cref{fig:conv:interp:rhostar:fdtovsfotd} shows the ratio of the bounds $\rho^*$ from \cref{fig:conv:interp:rhostar:tc-fotd-1,fig:conv:interp:rhostar:tc-fdto-1}, comparing the \textsc{fotd} and \textsc{fdto} strategies. This analysis showcases the advantage of having generic results: we can study these two propagators side-by-side, while previous bounds were specific to a single propagator choice. To assess the quality of our upper bound, we compare it to the one given by \cite[Corollary 3.6 and Theorem 3.8]{ganderPARAOPTPararealAlgorithm2020a} in \cref{fig:conv:interp:rhostar:origvsnew}. This shows the novel bound to be at least as tight as the existing one.

\clearpage
\section{Diagonali\sz{}ation-based preconditioners} \label{sec:diag}
    Having presented the ParaOpt algorithm and studied its convergence, we will now consider its scaling in \cref{sec:diag:scale}, establishing the need for efficient preconditioners in the inexact-Newton step. For affine coarse propagators, we propose in \cref{sec:diag:linear,sec:diag:prec} to construct a preconditioner that uses \emph{alpha-circulant} approximations of the system matrix, which can be inverted efficiently by a diagonali\sz{}ation procedure. \Cref{sec:diag:small} then details how to solve the arising smaller linear systems, after which \cref{sec:diag:conv} mentions some properties of the preconditioners that influence the convergence of iterative solvers.

    \subsection{Scaling ParaOpt and the need for preconditioners} \label{sec:diag:scale}
    When ParaOpt uses time integrators that satisfy \cref{ass:conv:setting:1,ass:conv:setting:2,ass:conv:setting:3}, $\rho^*$ in \cref{thm:conv:conv:tc-gen,thm:conv:conv:tr-gen} is an upper bound on the rate of the algorithm's exponential convergence. When the number of intervals $\Tidx$ is increased -- which scales both the problem size and the available parallelism -- we study the evolution of $\rho^*$ in two regimes of \emph{weak scaling} \cite{ganderPARAOPTPararealAlgorithm2020a}.
 \begin{itemize}
    \item When $T$ and $\Tidx$ are increased together, $\widehat\sigma$ and $\widehat\gamma$ do not change. Given that our bounds are $\Tidx$-independent, $\rho^*$ stays constant and the upper bound on the number of ParaOpt iterations is independent of the available parallelism.
    \item When $T$ is kept constant, $\DT$ -- and with it, $\widehat\sigma$ and $\widehat\gamma$ -- decreases instead. Scaling $\Tidx$ then corresponds to traversing the graphs in \cref{fig:conv:interp:rhostar} in a diagonal line towards the region with small $\widehat\sigma$ and $\widehat\gamma$. In, \cite{ganderPARAOPTPararealAlgorithm2020a}, it already noted that $\rho^*$ is bounded in this scaling regime for \textsc{fdto} implicit-Euler propagators on a linear diffusive terminal-cost problem. Our graphs allow similar conclusions and can be drawn for any propagators satisfying \cref{ass:conv:setting:1,ass:conv:setting:2,ass:conv:setting:3}. In \cref{fig:conv:interp:rhostar}'s examples, $\rho^*$ even goes to zero in the scaling limit, surpassing the usual concept of weak scalability. In the non-asymptotic regime, weak scalability may be absent until the maximum at $\widehat\sigma\approx1$ has been surmounted.
\end{itemize}
It follows from this discussion that, if the amount of work in each ParaOpt iteration scales linearly with $\Tidx$, the algorithm is weakly scalable for linear diffusive problems (and, experimentally, more broadly \cite{ganderPARAOPTPararealAlgorithm2020a}). This condition, however, is not yet fulfilled. The inexact-Newton procedure solves, in each iteration, a linear system with a matrix of size $(2\Sidx\widehat\Tidx\times2\Sidx\widehat\Tidx)$. With a direct or non-preconditioned iterative method, the cost of this scales superlinearly with $\widehat\Tidx$ (and thus with $\Tidx$), which -- especially if $K\in\mathbb R^{\Sidx\times\Sidx}$ is large but sparse -- starts to dominate the algorithm's execution time.

Hence our mission in the rest of this section is to precondition these systems such that the cost of solving them with iterative methods is linear in $\Tidx$. Each iteration of the iterative solver uses a matrix multiplication and an inversion, which will both scale (log-)linearly with $\Tidx$. The number of iterations should then be constant.

    \subsection{Linear coarse-grid correction} \label{sec:diag:linear}
    Our preconditioners will apply to affine coarse propagators (that is, $\tilde{\mathcal P}$ and $\tilde{\mathcal Q}$ are of the forms \cref{eq:conv:setting:lin:Ptilde,eq:conv:setting:lin:Qtilde}). Then the inexact-Newton step -- \emph{coarse-grid correction}, in Parareal vernacular -- looks like
\begin{equation}
    \tilde A\left[\begin{smallmatrix}
        \mv y^\iidx - \mv y^{\iidx-1}\\\mv{\widehat\ad}^\iidx - \mv{\widehat\ad}^{\iidx-1}
    \end{smallmatrix}\right] = -\mv f\left(\left[\begin{smallmatrix}
        \mv y^{\iidx-1}\\\mv{\widehat\ad}^{\iidx-1}
    \end{smallmatrix}\right]\right)
\end{equation}
where the matrix $\tilde A$ can be written as (using $\otimes$ to denote a Kronecker product)
\begin{equation} \label{eq:diag:linear:Atilde}
    \tilde A = \begin{litmat}
            I \otimes I + B \otimes \tilde\Phi_\mathcal P & I \otimes \tilde\Psi_\mathcal P\\
            -I \otimes \tilde\Psi_\mathcal Q + E \otimes (\tilde\Psi_\mathcal Q - \partial\widehat Q/\partial\mv y_{\widehat\Tidx}) & I \otimes I + B^\trsp \otimes \tilde\Phi_\mathcal Q
        \end{litmat}    \mper
\end{equation}
Here, $\widehat Q$ is as defined in \cref{eq:paraopt:paraopt:Qhat}, $E$ is a matrix with as only non-zero a one in its bottom-right corner, and $B$ only has $(-1)$s on its first sub-diagonal.

    \subsection{Formulating the preconditioners} \label{sec:diag:prec}
    Inspired by work on paralleli\sz{}ing the coarse-grid correction of Parareal \cite{wuParallelCoarseGrid2018a}, we propose a low-rank perturbation of $\tilde A$ in \cref{eq:diag:linear:Atilde} as a preconditioner. Introduce a parameter $\alpha\in\mathbb C$ and define
\begin{equation} \label{eq:diag:prec:Palpha}
    P(\alpha) \coloneqq \begin{litmat}
        I \otimes I + C(\alpha) \otimes \tilde\Phi_\mathcal P & I \otimes \tilde\Psi_\mathcal P\\
        -I \otimes \tilde\Psi_\mathcal Q & I \otimes I + C^*(\alpha) \otimes \tilde\Phi_\mathcal Q
    \end{litmat}    \mcom
\end{equation}
where $C(\alpha)$ differs from $B$ only by an additional non-zero value of $-\alpha$ in the top-right corner. This preconditioner contains two modifications compared to $\tilde A$: \begin{itemize}
    \item Replacing $B$ by $C(\alpha)$ causes a rank-$(2\Sidx)$ perturbation, which is scaled by $\alpha$.
    \item The term $E\otimes(\tilde\Psi_\mathcal Q-\partial\widehat Q/\partial\mv y_{\widehat\Tidx})$ has been left out. This has no effect in the tracking case, but is a rank-$\Sidx$ perturbation for terminal cost (although it combines with the perturbation above to another rank-$(2\Sidx)$ perturbation).
\end{itemize}
The preconditioner is itself invertible by a parallel process, as will be explained next. We outline two methods to perform this inversion: a general one, which requires us to choose $\abs\alpha=1$, and one for the case $\tilde\Psi_\mathcal Q=O$, which can use any $\alpha\ne0$.

\paragraph{General method}
The matrix $C(\alpha)$ used in \cref{eq:diag:prec:Palpha} is an \emph{alpha-circulant} matrix -- that is, it is Toeplitz and each super-diagonal is equal to a value $\alpha$ times its complementing sub-diagonal. It is well-known (see\ukus{}{,} e.g.\ukus{}{,}\ \cite{biniNumericalMethodsStructured2005a}) that alpha-circulants diagonali\sz{}e as
\begin{equation} \label{eq:diag:prec:Calpha}
    C(\alpha) = VD(\alpha)V^{-1} \quad \text{with} \quad V=\Gamma_\alpha^{-1}\mathbb F^* \quad \text{and} \quad D(\alpha) = \mathrm{diag}(\sqrt{\widehat\Tidx}\mathbb F\Gamma_\alpha\mv c_1(\alpha))
\end{equation}
where $\mv c_1(\alpha)$ is $C(\alpha)$'s first column, $\mathbb F=\{\E^{2\pi\iu jk/\widehat\Tidx}/\sqrt{\widehat\Tidx}\}_{j,k=0}^{\widehat\Tidx-1}$ is the discrete Fourier matrix, and we define $\Gamma_\alpha = \diag(1, \alpha^{1/\widehat\Tidx}, \ldots, \alpha^{(\widehat\Tidx-1)/\widehat\Tidx})$. When $\abs\alpha=1$, it holds that $\Gamma_\alpha^{-1}=\Gamma_\alpha^*$ (where $\Gamma_\alpha^*$ is the Hermitian transpose of $\Gamma_\alpha$) and, therefore, $C(\alpha)$ and $C^*(\alpha)$ are simultaneously diagonali\sz{}able \cite{mezelfparadiag}. Then
\begin{equation*}
    P^{-1}(\alpha) = \Bigl(\begin{smallmatrix}
        \left[\begin{smallmatrix}
            \Gamma_\alpha^*\mathbb F^*\\&\Gamma_\alpha^*\mathbb F^*
        \end{smallmatrix}\right] \otimes I\end{smallmatrix}
    \Bigr)\left[\begin{smallmatrix}
        I\otimes I + D(\alpha)\otimes\tilde\Phi_\mathcal P & I \otimes \tilde\Psi_\mathcal P\\
        -I\otimes\tilde\Psi_\mathcal Q & I\otimes I + D^*(\alpha)\otimes\tilde\Phi_\mathcal Q
    \end{smallmatrix}\right]^{-1}\Bigl(\begin{smallmatrix}
        \left[\begin{smallmatrix}
            \mathbb F\Gamma_\alpha\\&\mathbb F\Gamma_\alpha
        \end{smallmatrix}\right] \otimes I\end{smallmatrix}
    \Bigr)    \mcom
\end{equation*}
where the inverted matrix on the right-hand side has only diagonal matrices as left operands in the Kronecker products. Thus inversion of $P(\alpha)$ can be decomposed into $\widehat\Tidx$ different inversions that can be solved in parallel, as implemented in \cref{alg:diag:prec:gen}. The Fourier matrix $\mathbb F$ can be applied efficiently (log-linearly in $\Tidx$) with the fast Fourier transform and can be paralleli\sz{}ed over the spatial dimensions of the problem.

\begin{algorithm}
    \caption{Procedure for inverting $P(\alpha)$ from \cref{eq:diag:prec:Palpha}} \label{alg:diag:prec:gen}
    \begin{tabular}{rl}
        \textbf{Input:}  &Vectors $\mv v$ and $\mv w$\\
        &Matrix $D(\alpha)$ following from the time discreti\sz{}ation by \cref{eq:diag:prec:Calpha} ($\hspace{.03cm}\abs\alpha=1$)\\
        &with diagonal elements $d_\tidx(\alpha)$\\
        \textbf{Output:} &The vector $\bigl[\begin{smallmatrix}\kronv x\\ \kronv z\end{smallmatrix}\bigr] = \kronm P^{-1}(\alpha)\bigl[\begin{smallmatrix}\kronv v\\ \kronv w\end{smallmatrix}\bigr]$\\
    \end{tabular}
    \begin{algorithmic}[1]
        \State Calculate $\kronv{r_1} \coloneqq (\mathbb F \Gamma_\alpha \kron I)\kronv{v}$, $\kronv{s_1} \coloneqq (\mathbb F \Gamma_\alpha \kron I)\kronv{w}$ using the (parallel) \textsc{fft}.
        \State For $\tidx=\{1, \ldots, \widehat\Tidx\}$, solve (in parallel)
            \begin{equation} \label{eq:alg:diag:prec:gen:sys}
                \begin{litmat}\mv r_{\mv{2,}\tidx}\\\mv s_{\mv{2,}\tidx}\end{litmat}\coloneqq
                \begin{litmat}
                    I + d_\tidx(\alpha)\tilde\Phi_\mathcal P & \tilde\Psi_\mathcal P\\
                    -\tilde\Psi_\mathcal Q & I + d_\tidx^*(\alpha)\tilde\Phi_\mathcal Q\\
                \end{litmat}^{-1}
                \begin{litmat}\mv r_{\mv{1,}\tidx}\\\mv s_{\mv{1,}\tidx}\end{litmat}    \mper
            \end{equation}
        \State Calculate $\kronv{x} = (\Gamma_\alpha^{-1}\mathbb F^* \kron I)\kronv{r_2}$, $\kronv{z} = (\Gamma_\alpha^{-1}\mathbb F^* \kron I)\kronv{s_2}$ using the (parallel) \textsc{fft}.
    \end{algorithmic}
\end{algorithm}

\paragraph{Method for a block-triangular preconditioner}
When $\tilde\Psi_\mathcal Q=O$ (as is often the case for terminal-cost objectives), we can invert the bottom and top halves of $P(\alpha)$ separately. Then simultaneous diagonali\sz{}ability of $C(\alpha)$ and $C^*(\alpha)$ is no longer needed and $\alpha$ can be any non-zero number (with small values generally working better \cite{mezelfparadiag}, as they decrease the difference between $P(\alpha)$ and $\tilde A$). Then it holds that
\begin{equation*}
    P^{-1}(\alpha) = \Bigl(\begin{smallmatrix}
        \left[\begin{smallmatrix}
            \Gamma_\alpha^{-1}\mathbb F^*\\&\Gamma_\alpha^*\mathbb F^*
        \end{smallmatrix}\right] \otimes I\end{smallmatrix}
    \Bigr)\left[\begin{smallmatrix}
        I\otimes I + D(\alpha)\otimes\tilde\Phi_\mathcal P & (\mathbb F\Gamma_\alpha\Gamma_\alpha^*\mathbb F^*) \otimes \tilde\Psi_\mathcal P\\
        & I\otimes I + D^*(\alpha)\otimes\tilde\Phi_\mathcal Q
    \end{smallmatrix}\right]^{-1}\Bigl(\begin{smallmatrix}
        \left[\begin{smallmatrix}
            \mathbb F\Gamma_\alpha\\&\mathbb F\Gamma_\alpha^{-*}
        \end{smallmatrix}\right] \otimes I\end{smallmatrix}
    \Bigr)    \mper
\end{equation*}
\Cref{alg:diag:prec:triangle} lays out how to multiply a vector by $P^{-1}(\alpha)$ when using this method.

\begin{algorithm}[h!]
    \caption{Procedure for inverting $P(\alpha)$ from \cref{eq:diag:prec:Palpha} when $\tilde\Psi_\mathcal Q = O$} \label{alg:diag:prec:triangle}
    \begin{tabular}{rl}
        \textbf{Input:}  &Vectors $\mv v$ and $\mv w$\\
        &Matrix $D(\alpha)$ following from the time discreti\sz{}ation by \cref{eq:diag:prec:Calpha} ($\hspace{.03cm}\abs\alpha\ne0$)\\
        &with diagonal elements $d_\tidx(\alpha)$\\
        \textbf{Output:} &The vector $\bigl[\begin{smallmatrix}\kronv x\\ \kronv z\end{smallmatrix}\bigr] = \kronm P^{-1}(\alpha)\bigl[\begin{smallmatrix}\kronv v\\ \kronv w\end{smallmatrix}\bigr]$\\
    \end{tabular}
    \begin{algorithmic}[1]
            \LineComment Phase 1: invert the bottom-right block
            \State Calculate $((\mv s_{\mv{1,}1})^\trsp, \ldots, (\mv s_{\mv{1,}\widehat\Tidx})^\trsp)^\trsp \coloneqq (\mathbb F \Gamma_\alpha^{-*} \kron I)\kronv{w}$ using the (parallel) \textsc{fft}.
            \State For $\tidx=\{1, \ldots, \widehat\Tidx\}$, solve (in parallel)
            \begin{equation}
                \mv s_{\mv{2,}\tidx} \coloneqq (I + d_\tidx^*(\alpha)\tilde\Phi_\mathcal Q)^{-1}\mv s_{\mv{1,}\tidx}
            \end{equation}
            and assemble $\kronv{s_2} \coloneqq ((\mv s_{\mv{2,}1})^\trsp, \ldots, (\mv s_{\mv{2,}\widehat\Tidx})^\trsp)^\trsp$.
            \State Calculate $\kronv{z} = (\Gamma_\alpha^*\mathbb F ^* \kron I)\kronv{s_2}$ using the (parallel) \textsc{fft}.
            \vspace{.3cm}
            \LineComment Phase 2: invert the rest of the matrix
            \State Set $\kronv{r_1} = \kronv v - (I \kron \tilde\Psi_\mathcal P)\kronv z$.
            \State Calculate $((\mv r_{\mv{2,}1})^\trsp, \ldots, (\mv r_{\mv{2,}\widehat\Tidx})^\trsp)^\trsp \coloneqq (\mathbb F \Gamma_\alpha \kron I)\kronv{r_1}$ using the (parallel) \textsc{fft}.
            \State For $\tidx=\{1, \ldots, \widehat\Tidx\}$, solve (in parallel)
            \begin{equation}
                \mv r_{\mv{3,}\tidx} \coloneqq (I + d_\tidx(\alpha)\tilde\Phi_\mathcal P)^{-1}\mv r_{\mv{2,}\tidx}
            \end{equation}
            and assemble $\kronv{r_3} \coloneqq ((\mv r_{\mv{3,}1})^\trsp, \ldots, (\mv r_{\mv{3,}\widehat\Tidx})^\trsp)^\trsp$.
            \State Calculate $\kronv{x} = (\Gamma_\alpha^{-1}\mathbb F ^* \kron I)\kronv{r_3}$ using the (parallel) \textsc{fft}.
    \end{algorithmic}
\end{algorithm}

    \subsection{Solving the smaller systems} \label{sec:diag:small}
    In \cref{alg:diag:prec:gen,alg:diag:prec:triangle}, it is needed to solve $\widehat\Tidx$ linear systems in parallel to each other. In the general method (\cref{alg:diag:prec:gen}), these systems \cref{eq:alg:diag:prec:gen:sys} use matrices of the form
\begin{equation}
    H_\tidx \coloneqq \begin{litmat}
        I + d_\tidx(\alpha)\tilde\Phi_\mathcal P & \tilde\Psi_\mathcal P\\
        -\tilde\Psi_\mathcal Q & I + d_\tidx^*(\alpha)\tilde\Phi_\mathcal Q
    \end{litmat}
\end{equation}
and this subsection outlines how $H_\tidx$ can be inverted. This is non-trivial due to the matrices $\tilde\Phi_\mathcal P$, $\tilde\Phi_\mathcal Q$, $\tilde\Psi_\mathcal P$, and $\tilde\Psi_\mathcal Q$, which are defined through the coarse propagators \cref{eq:conv:setting:lin:Ptilde,eq:conv:setting:lin:Qtilde}. We focus on the general method; the speciali\sz{}ed case of \cref{alg:diag:prec:triangle} is simpler and can be treated analogously. We propose two methods for solving systems with $H_\tidx$, differing both in generality and in performance.

\clearpage
\paragraph{Method 1: Black-box approach}
One strength of the ParaOpt algorithm is that the propagators can be given as black boxes (although for our preconditioners, we do require the coarse ones to be affine). In that case we can only access the $\tilde\Phi$ and $\tilde\Psi$ matrices through the propagators $\tilde{\mathcal P}$ and $\tilde{\mathcal Q}$ by \cref{eq:conv:setting:lin:Ptilde,eq:conv:setting:lin:Qtilde}. Without giving up on the black-box character of the coarse solvers, the smaller systems can be tackled with an iterative solver, which only needs $H_\tidx$ as a multiplication routine. It can be seen that, for any $\mv x$ and $\mv z$,
\begin{equation}
    H_\tidx\begin{litmat}\mv x\\\mv z\end{litmat} = \begin{litmat}
        \mv x + \tilde{\mathcal P}(d_\tidx(\alpha)\mv x, -\mv z) - \tilde{\mathcal P}(\mv0, \mv0)\\
        \mv z + \tilde{\mathcal Q}(-\mv x, d_\tidx^*(\alpha)\mv z) - \tilde{\mathcal Q}(\mv0, \mv0)
    \end{litmat}
\end{equation}
where the quantities $\tilde{\mathcal P}(\mv0, \mv0)$ and $\tilde{\mathcal Q}(\mv0, \mv0)$ can be precomputed.

\paragraph{Method 2: Using the coarse propagators' explicit form}
In many cases, the coarse propagators will be simple and their explicit form known. Then it is often possible to solve the linear system more cheaply than with the black-box approach above. \Cref{ex:diag:small:tr-ie1} illustrates this for the case of a simple coarse propagator.
\begin{example}[Tracking with one-step implicit Euler] \label{ex:diag:small:tr-ie1}
    ParaOpt with a tracking objective and a one-step implicit-Euler coarse propagator for linear problems has that
    \begin{equation}
        \tilde\Phi = (I+\DT K)^{-1} \quad \text{and} \quad \tilde\Psi = \widehat\gamma(I + \DT K)^{-1}    \mcom
    \end{equation}
    as can be derived using \cref{sec:apdx-po-prop:phipsi}'s arguments. Write $Z\coloneqq I+\DT K$. Then
    \begin{equation}
        \Bigl[\begin{smallmatrix}
            I + d_l(\alpha)Z^{-1} & \widehat\gamma Z^{-1}\\
            -\widehat\gamma Z^{-1} & I + d_l^*(\alpha)Z^{-1}\\
        \end{smallmatrix}\Bigr]\Bigl[\begin{smallmatrix}
            \mv{r_2}\\\mv{s_2}
        \end{smallmatrix}\Bigr]=\Bigl[\begin{smallmatrix}
            \mv{r_1}\\\mv{s_1}
        \end{smallmatrix}\Bigr]
        \Leftrightarrow \Bigl[\begin{smallmatrix}
            Z + d_l(\alpha)I & \widehat\gamma I\\
            -\widehat\gamma I & Z + d_l^*(\alpha)I\\
        \end{smallmatrix}\Bigr]\Bigl[\begin{smallmatrix}
            \mv{r_2}\\\mv{s_2}
        \end{smallmatrix}\Bigr]=\Bigl[\begin{smallmatrix}
            Z\mv{r_1}\\Z\mv{s_1}
        \end{smallmatrix}\Bigr]    \mper
    \end{equation}
    The second form is much easier to solve, by either direct or iterative methods.
\end{example}

Both approaches above can use iterative solvers, while only method 2 can use direct methods. In the context of \textsc{ivp} ParaDiag algorithms, multiple techniques have been proposed to accelerate solving related linear systems \cite{heVankatypeMultigridSolver2022a,liuROMacceleratedParallelintimePreconditioner2020}. Adaptations of those methods could conceivably further improve the efficiency of our preconditioner.

    \subsection{Convergence results} \label{sec:diag:conv}
    To assess the convergence properties of solving systems with $\tilde A$ using the proposed preconditioners $P(\alpha)$, we take the usual approach of studying the eigenvalues of the preconditioned system matrix $P^{-1}(\alpha)\tilde A$. If those are clustered together and lie far enough away from zero, rapid convergence is expected for most iterative linear-system solvers such as \textsc{gmres} \cite{saadGMRESGeneralizedMinimal1986a}.

An advantage of using ParaDiag-inspired preconditioners is that, in certain cases, eigenvalue results from ParaDiag apply directly. In \cite{mezelfparadiag}, analytic expressions are provided for the preconditioned eigenvalues of optimi\sz{}ation ParaDiag methods applied to linear diffusive problems, for both objective functions we study. When \cref{ass:conv:setting:1,ass:conv:setting:2} are satisfied \emph{for the coarse propagator}, those eigenvalue expressions also apply to the proposed ParaOpt preconditioners. In particular, for tracking and when choosing $\alpha=-1$ in \cref{eq:diag:prec:Palpha}, it is shown in \cite{mezelfparadiag} that \textsc{gmres} converges exponentially with a problem-independent convergence rate under \cref{ass:conv:setting:1,ass:conv:setting:2,ass:conv:setting:3}.

Even in the most general situation, where nothing is known about the coarse propagators other than them being affine, preconditioned coarse-grid correction asymptotically scales well with increasing time-parallelism, as the following theorem asserts.
\begin{theorem}
    The preconditioned matrix $P^{-1}(\alpha)\tilde A$ has at most $2\Sidx$ eigenvalues that differ from $1$.
\end{theorem}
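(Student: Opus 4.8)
The plan is to show that $P(\alpha) - \tilde A$ has rank at most $2\Sidx$, which immediately implies that $P^{-1}(\alpha)\tilde A = I + P^{-1}(\alpha)(\tilde A - P(\alpha))$ differs from the identity by a matrix of rank at most $2\Sidx$, and hence has at most $2\Sidx$ eigenvalues different from $1$.

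First I would write out $P(\alpha) - \tilde A$ explicitly using \cref{eq:diag:linear:Atilde} and \cref{eq:diag:prec:Palpha}. Comparing block by block, the difference in the top-left block is $(C(\alpha) - B)\otimes\tilde\Phi_\mathcal P$, the top-right blocks agree, the bottom-right block difference is $(C^*(\alpha) - B^\trsp)\otimes\tilde\Phi_\mathcal Q$, and the bottom-left block difference is $-E\otimes(\tilde\Psi_\mathcal Q - \partial\widehat Q/\partial\mv y_{\widehat\Tidx})$. By construction, $C(\alpha)$ differs from $B$ only in the top-right corner entry (a single scalar $-\alpha$), so $C(\alpha) - B$ is a rank-one matrix $-\alpha\, \mv e_1 \mv e_{\widehat\Tidx}^\trsp$; likewise $C^*(\alpha) - B^\trsp = -\bar\alpha\,\mv e_{\widehat\Tidx}\mv e_1^\trsp$ is rank one, and $E$ is rank one ($\mv e_{\widehat\Tidx}\mv e_{\widehat\Tidx}^\trsp$). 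A Kronecker product $\mv u\mv v^\trsp \otimes M$ has rank equal to $\operatorname{rank}(M) \le \Sidx$.

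Next I would bound the total rank. A crude count gives three rank-$\le\Sidx$ blocks for a total of $3\Sidx$, so the key step is to notice the overlap: the nonzero rows of the top-left perturbation lie in the first block-row and first block-column position, the bottom-right perturbation occupies the last block-row of the lower half, and the bottom-left perturbation also occupies that same last block-row. Concretely, every column of $P(\alpha) - \tilde A$ is supported on the index set corresponding to block positions $1$ and $\widehat\Tidx$ of the state variables together with block position $\widehat\Tidx$ of the adjoint variables — but more to the point, the row space is spanned by rows indexed by block $1$ of the upper half and block $\widehat\Tidx$ of the lower half only, giving at most $2\Sidx$ independent rows. Equivalently, writing $P(\alpha) - \tilde A = \bigl[\begin{smallmatrix} -\alpha(\mv e_1\mv e_{\widehat\Tidx}^\trsp)\otimes\tilde\Phi_\mathcal P & O\\ -\mv e_{\widehat\Tidx}\mv e_{\widehat\Tidx}^\trsp\otimes(\partial\widehat Q/\partial\mv y_{\widehat\Tidx}-\tilde\Psi_\mathcal Q) & -\bar\alpha(\mv e_{\widehat\Tidx}\mv e_1^\trsp)\otimes\tilde\Phi_\mathcal Q\end{smallmatrix}\bigr]$, one sees the image is contained in $\operatorname{span}\{\mv e_1\}\otimes\mathbb R^\Sidx$ in the upper half plus $\operatorname{span}\{\mv e_{\widehat\Tidx}\}\otimes\mathbb R^\Sidx$ in the lower half, a space of dimension $2\Sidx$. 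Hence $\operatorname{rank}(P(\alpha)-\tilde A)\le 2\Sidx$.

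Finally, since $P^{-1}(\alpha)\tilde A - I = P^{-1}(\alpha)(\tilde A - P(\alpha))$ has rank at most $2\Sidx$, its kernel has dimension at least $2\Sidx(\widehat\Tidx-1)$, so $1$ is an eigenvalue of $P^{-1}(\alpha)\tilde A$ with geometric multiplicity at least $2\Sidx(\widehat\Tidx-1)$, leaving at most $2\Sidx$ eigenvalues unequal to $1$. The main obstacle is bookkeeping: one must be careful that the low-rank perturbations in the different blocks are supported on a common $2\Sidx$-dimensional subspace of the \emph{image}, rather than naively adding their ranks; the remark in the paper that the two modifications to $\tilde A$ "combine to another rank-$(2\Sidx)$ perturbation" is exactly the fact to verify here, and the terminal-cost case (where $\partial\widehat Q/\partial\mv y_{\widehat\Tidx}\ne\tilde\Psi_\mathcal Q$ in general) is the one that needs the overlap argument rather than a term-by-term count.
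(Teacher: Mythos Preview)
Your proposal is correct and follows exactly the paper's approach: the paper's proof simply asserts that $\tilde A - P(\alpha)$ has rank at most $2\Sidx$ (a fact already previewed in the text immediately before the theorem) and then cites the well-known implication for the eigenvalues of $P^{-1}(\alpha)\tilde A$. You have merely filled in the bookkeeping that the paper leaves implicit; the minor sign slip in your bottom-left block (it should be $+E\otimes(\partial\widehat Q/\partial\mv y_{\widehat\Tidx}-\tilde\Psi_\mathcal Q)$ rather than $-E\otimes(\cdot)$) is immaterial to the rank argument.
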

\begin{proof}
    The difference $\tilde A-P(\alpha)$ between the matrices has maximum rank $2\Sidx$. It is a well-known result (mentioned in\ukus{}{,} e.g.\ukus{}{,}\ \cite{wuParallelInTimeBlockCirculantPreconditioner2020a}) that this proves the theorem.
\end{proof}

\section{Proof of \cref{thm:conv:conv:tr-gen}} \label{sec:proof-tr}
    Given the discussion in \cref{sec:conv:setting} it holds that $\rho=\max_{\sigma\in\mathrm{eig}(K)}\max(\kern2pt\abs{\mathrm{eig}(S_\sigma)})$, with $S_\sigma$ given by \cref{eq:conv:setting:Ssigma:tr}. Define $B$ and $\tilde B$ such that
\begin{equation} \label{eq:proof-tr:proof-tr:Ssigma}
    S_\sigma = I - \begin{litmat}
        \tilde B & \tilde\psi I\\
        -\tilde\psi I & \tilde B^\trsp\\
    \end{litmat}^{-1}\begin{litmat}
        B & \psi I\\
        -\psi I & B^\trsp\\
    \end{litmat}    \mper
\end{equation}

We will first prove a general result about the eigenvalues $(1-\theta)$ of \cref{eq:proof-tr:proof-tr:Ssigma}, making abstraction of the forms of $B$ and $\tilde B$.
\begin{lemma} \label{lmm:proof-tr:1-theta2}
    Let $\theta$ be an eigenvalue of
    \begin{equation}
        \begin{litmat}
            \tilde B & \tilde \psi I\\
            -\tilde \psi I & \tilde B^\trsp\\
        \end{litmat}^{-1}\begin{litmat}
            B & \psi I\\
            -\psi I & B^\trsp\\
        \end{litmat}
    \end{equation}
    with $B,\tilde B \in \mathbb R^{\widehat\Tidx\times\widehat\Tidx}$. Then, for some vector $\mv v \in \mathbb C^{\widehat\Tidx}$,
    \begin{equation} \label{lmm:proof-tr:1-theta2:1-theta2}
        \abs{1 - \theta}^2 = \frac{\mv v^*((B-\tilde B)^\trsp(B-\tilde B)+(\psi-\tilde\psi)^2I)\mv v}{\mv v^*(\tilde B^\trsp \tilde B+\tilde\psi^2I)\mv v}    \mper
    \end{equation}
\end{lemma}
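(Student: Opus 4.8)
The plan is to reduce the claim to a scalar quadratic with real coefficients whose two roots have product equal to the asserted fraction. Let $\mv w = [\mv v^\trsp, \mv z^\trsp]^\trsp$ be an eigenvector of the matrix in the statement for the eigenvalue $\theta$; writing out the two block rows gives $(B - \theta\tilde B)\mv v = (\theta\tilde\psi - \psi)\mv z$ and $(B^\trsp - \theta\tilde B^\trsp)\mv z = -(\theta\tilde\psi - \psi)\mv v$. Eliminating $\mv z$ (when $\theta\tilde\psi \ne \psi$) yields
\[
    (B - \theta\tilde B)^\trsp(B - \theta\tilde B)\,\mv v = -(\theta\tilde\psi - \psi)^2\,\mv v \mper
\]
A short separate argument, which I would relegate to the end, handles the degenerate cases $\theta\tilde\psi = \psi$ and $\mv v = \mv 0$ by taking $\mv v$ to be a nonzero null vector of $B - \theta\tilde B$ instead; in those cases the identity still holds (both sides vanish) and one lands in the real-$s$ analysis below.

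Next, setting $s \coloneqq 1 - \theta$ and substituting $B - \theta\tilde B = (B - \tilde B) + s\tilde B$ and $\theta\tilde\psi - \psi = (\tilde\psi - \psi) - s\tilde\psi$ into the displayed identity, I collect powers of $s$ to obtain the quadratic eigenvalue problem
\[
    \bigl(M_0 + s\,M_1 + s^2 M_2\bigr)\mv v = \mv 0 \mcom
\]
with $M_0 = (B - \tilde B)^\trsp(B - \tilde B) + (\psi - \tilde\psi)^2 I$ the numerator matrix of the claim, $M_2 = \tilde B^\trsp\tilde B + \tilde\psi^2 I$ the denominator matrix, and $M_1 = (B-\tilde B)^\trsp\tilde B + \tilde B^\trsp(B-\tilde B) - 2(\tilde\psi-\psi)\tilde\psi I$. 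The crucial observation is that $M_0$, $M_1$, $M_2$ are all real and symmetric, so left-multiplying by $\mv v^*$ produces a scalar quadratic $a_2 s^2 + a_1 s + a_0 = 0$ with \emph{real} coefficients $a_i \coloneqq \mv v^* M_i \mv v$; moreover $a_2 = \norm{\tilde B\mv v}_2^2 + \tilde\psi^2\norm{\mv v}_2^2 > 0$ (using $\tilde\psi > 0$, \cref{ass:conv:setting:3}), so this is genuinely a degree-two polynomial, and its two roots multiply to $a_0/a_2$.

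It remains to show $\abs{1-\theta}^2 = \abs{s}^2 = a_0/a_2$, which is exactly the asserted ratio $\mv v^* M_0 \mv v / \mv v^* M_2 \mv v$. If $s \notin \mathbb R$, then $\bar s$ is the other root of the real quadratic, so $\abs s^2 = s\bar s = a_0/a_2$. If $s \in \mathbb R$ (equivalently $\theta\in\mathbb R$), then $B - \theta\tilde B$ is a real matrix, so $(B - \theta\tilde B)^\trsp(B - \theta\tilde B)$ is positive semidefinite; by the displayed identity its eigenvalue $-(\theta\tilde\psi - \psi)^2 \le 0$ must vanish, which forces $\theta\tilde\psi = \psi$ together with $(B - \theta\tilde B)\mv v = \mv 0$, and substituting $(B-\tilde B)\mv v = -s\tilde B\mv v$ and $(\psi - \tilde\psi)^2 = s^2\tilde\psi^2$ into $a_0$ gives $a_0 = s^2 a_2$ directly. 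The main obstacle is precisely this real-$s$ branch: a real quadratic with two \emph{distinct} real roots would in general break $\abs s^2 = a_0/a_2$, so the semidefiniteness argument collapsing it to $\theta\tilde\psi = \psi$ and $(B - \theta\tilde B)\mv v = \mv 0$ is the heart of the proof; the remaining manipulations are routine bookkeeping.
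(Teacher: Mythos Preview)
Your argument is correct and takes a genuinely different route from the paper's. Both proofs start from the same block eigenvalue equations and eliminate the second half of the eigenvector to reach a scalar quadratic with real coefficients, but they diverge from there. The paper keeps the quadratic in $\theta$, proves in a separate lemma (via a Cauchy--Schwarz argument on the augmented vectors $[\tilde B^\trsp\mv v;\tilde\psi]$ and $[B^\trsp\mv v;\psi]$) that its discriminant is nonpositive, and then expands $\abs{1-\theta}^2 = 1 - b/a + c/a$ directly. You instead substitute $s=1-\theta$ up front, so that the constant and leading coefficients of the quadratic are already the numerator and denominator of the claim; the non-real case is then Vieta's formula, and your positive-semidefiniteness observation for real $s$ replaces the paper's discriminant lemma entirely. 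In effect, your PSD step shows that the non-degenerate case always has $s\notin\mathbb R$, which is equivalent to (and a slicker proof of) the paper's $b^2-4ac\le 0$. What the paper's approach buys is a uniform treatment with no case split inside the lemma; what yours buys is the elimination of an auxiliary lemma and a more transparent link between the quadratic's coefficients and the target ratio.
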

\begin{proof}
    The proof of this lemma is inspired by \cite{wuParallelInTimeBlockCirculantPreconditioner2020a}, which in turn refers to \cite{simonciniSpectralPropertiesHermitian2004}. An eigenvalue $\theta$ and its corresponding eigenvector $(\mv v^\trsp, \mv w^\trsp)^\trsp$ satisfy
    \begin{equation}
        \begin{litmat}
            B & \psi I\\
            -\psi I & B^\trsp\\
        \end{litmat}\begin{litmat}
            \mv v\\ \mv w\\
        \end{litmat} = \theta\begin{litmat}
            \tilde B & \tilde \psi I\\
            -\tilde \psi I & \tilde B^\trsp\\
        \end{litmat}\begin{litmat}
            \mv v\\ \mv w\\
        \end{litmat}    \mcom
    \end{equation}
    which is equivalent to the system
    \begin{subequations} \label{eq:paraopt:eig}
    \begin{align}
        \label{eq:paraopt:eig2} B\mv v + \psi\mv w &= \theta(\tilde B\mv v + \tilde\psi\mv w)    \mcom\\
        \label{eq:paraopt:eig1} -\psi\mv v + B^\trsp\mv w &= \theta(-\tilde\psi\mv v + \tilde B^\trsp \mv w)    \mper
    \end{align}
    \end{subequations}

    Then from (\ref{eq:paraopt:eig2}) follows
    \begin{equation}
        \mv w = \frac{ \theta \tilde B - B}{\psi-\theta \tilde\psi}\mv v    \mcom
    \end{equation}
    which, when filled into (\ref{eq:paraopt:eig1}), yields
    \begin{equation}
        -\psi\mv v + B^\trsp\frac{ \theta \tilde B - B}{\psi-\theta \tilde\psi}\mv v = \theta(\tilde B\mv v + \tilde\psi\frac{ \theta \tilde B - B}{\psi-\theta \tilde\psi}\mv v)    \mper
    \end{equation}
    After left-multiplying by $\mv v^*$, this can be manipulated into a quadratic equation in $\theta$:
    \begin{equation} \label{eq:paraopt:quad}
    \begin{aligned}
        a\theta^2 - b\theta + c \coloneqq& \mv v^*(\tilde B^\trsp \tilde B + \tilde\psi^2I)\mv v \theta^2 \\{}-{}& \mv v^*(B^\trsp \tilde B + 2\psi\tilde\psi I+\tilde B^\trsp B)\mv v \theta \\{}+{}& \mv v^*(B^\trsp B + \psi^2I)\mv v = 0    \mper
    \end{aligned}
    \end{equation}
    Note that $a$, $b$, and $c$ are real numbers due to the matrices between parentheses being real and symmetric.
    % \paragraph{$\theta$ is real}
    % We left-multiply the equations in (\ref{eq:paraopt:eig}) by $\mv w^*$ and $\mv v^*$, respectively, giving
    % \begin{subequations} \label{eq:paraopt:eigmult}
    % \begin{align}
    %     \label{eq:paraopt:eigmult2} \mv w^*B\mv v + \mv w^*\psi\mv w &= \theta(\mv w^*\tilde B\mv v + \mv w^*\tilde\psi\mv w)\\
    %     \label{eq:paraopt:eigmult1} -\mv v^*\psi\mv v + \mv v^*B^\trsp\mv w &= \theta(-\mv v^*\tilde\psi\mv v + \mv v^*\tilde B^\trsp \mv w)    \mcom
    % \end{align}
    % \end{subequations}
    % and, upon subtraction of the conjugate of (\ref{eq:paraopt:eigmult1}) from (\ref{eq:paraopt:eigmult2}), obtain (making use of the assumption that $\theta = \theta^*$ is real)
    % \begin{equation}
    %     \mv v^*\psi\mv v + \mv w^*\psi\mv w = \theta \mv v^*\tilde\psi\mv v + \theta \mv w^*\tilde\psi\mv w \Leftrightarrow \theta = \frac\psi{\tilde\psi}    \mper
    % \end{equation}
    % \paragraph{$\theta$ is complex}
    The solutions to (\ref{eq:paraopt:quad}) are found as
    \begin{equation}
        \theta_\pm = \frac{b}{2a} \pm \sqrt{\left(\frac{b}{2a}\right)^2 - \frac c a} \eqqcolon \Re(\theta) \pm \iu\Im(\theta)    \mper
    \end{equation}
    Later, \cref{lmm:paraopt:b^2-4ac} will prove that $\left(\frac{b}{2a}\right)^2 - \frac c a \le 0$ always holds. We can then state $\Re(\theta)^2 = \left(\frac{b}{2a}\right)^2$ and $\Im(\theta)^2 = \frac c a - \left(\frac{b}{2a}\right)^2$. It holds that
    \begin{equation} \label{eq:paraopt:conv:compltheta}
    \begin{aligned}
        \abs{1 - \theta}^2 &= \Re(1 - \theta)^2 + \Im(1 - \theta)^2 = (1 - \Re(\theta))^2 + \Im(\theta)^2 \\
        &= \left(1 - \frac{b}{2a}\right)^2 + \left(\frac c a - \left(\frac{b}{2a}\right)^2\right) = 1 + \frac c a - \frac b a\\
        &= \frac{\mv v^*(B^\trsp B+\tilde B^\trsp \tilde B-B^\trsp \tilde B-\tilde B^\trsp B+\psi^2I+\tilde\psi^2I-2\psi\tilde\psi I)\mv v}{\mv v^*(\tilde B^\trsp \tilde B+\tilde\psi^2I)\mv v}\\
        &= \frac{\mv v^*((B-\tilde B)^\trsp(B-\tilde B)+(\psi-\tilde\psi)^2I)\mv v}{\mv v^*(\tilde B^\trsp \tilde B+\tilde\psi^2I)\mv v}    \mcom
    \end{aligned}
    \end{equation}
    which is exactly (\ref{lmm:proof-tr:1-theta2:1-theta2}).
\end{proof}
\begin{lemma} \label{lmm:paraopt:b^2-4ac}
    In the quadratic equation (\ref{eq:paraopt:quad}), it always holds that
    \begin{equation} \label{eq:lmm:paraopt:b^2-4ac}
        b^2-4ac \le 0    \mper
    \end{equation}
\end{lemma}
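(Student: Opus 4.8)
The plan is to recognise the coefficients $a$, $b$, and $c$ of the quadratic \cref{eq:paraopt:quad} as the ingredients of a Cauchy--Schwarz inequality. To that end, introduce the two vectors in $\mathbb C^{2\widehat\Tidx}$
\begin{equation*}
    \mv p \coloneqq \begin{litmat}B\mv v\\ \psi\mv v\end{litmat} \quad \text{and} \quad \mv q \coloneqq \begin{litmat}\tilde B\mv v\\ \tilde\psi\mv v\end{litmat}    \mper
\end{equation*}
Since $B$, $\tilde B$, $\psi$, and $\tilde\psi$ are real, a direct computation shows that $c = \mv p^*\mv p$ and $a = \mv q^*\mv q$, while the cross terms combine as $b = \mv p^*\mv q + \mv q^*\mv p = 2\Re(\mv p^*\mv q)$; the latter uses $(B\mv v)^*(\tilde B\mv v) = \mv v^*B^\trsp\tilde B\mv v$ together with the fact that $\psi\tilde\psi\,\mv v^*\mv v$ is real, so the two cross contributions to $b$ are complex conjugates of one another.

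With this identification, proving \cref{eq:lmm:paraopt:b^2-4ac} amounts to showing $(\Re(\mv p^*\mv q))^2 \le (\mv p^*\mv p)(\mv q^*\mv q)$, after which $b^2 - 4ac = 4(\Re(\mv p^*\mv q))^2 - 4(\mv p^*\mv p)(\mv q^*\mv q) \le 0$. This estimate is immediate: $(\Re z)^2 \le \abs z^2$ for every $z\in\mathbb C$, and the Cauchy--Schwarz inequality gives $\abs{\mv p^*\mv q}^2 \le (\mv p^*\mv p)(\mv q^*\mv q)$. (As a degenerate case, note that if $a = 0$ then $\mv q = \mv 0$, hence also $b = 0$, and the inequality holds trivially; otherwise $a>0$ and the argument above applies verbatim.)

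The only genuine step here is spotting the block decomposition that writes $a$, $b$, $c$ as $\norm{\mv q}_2^2$, $2\Re(\mv p^*\mv q)$, and $\norm{\mv p}_2^2$; once that is in place, the inequality is automatic. The one point requiring a little care is the book-keeping of conjugates, because $\mv v$ may be complex even though $B$ and $\tilde B$ are real — this is exactly why only the real part of $\mv p^*\mv q$ enters $b$, and hence why the (slightly lossy) bound $(\Re z)^2 \le \abs z^2$ appears rather than an equality. Since only an inequality is needed, this loss is harmless. This mirrors the Hermitian/skew-Hermitian splitting arguments in \cite{wuParallelInTimeBlockCirculantPreconditioner2020a,simonciniSpectralPropertiesHermitian2004} that inspired \cref{lmm:proof-tr:1-theta2}.
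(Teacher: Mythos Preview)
Your proof is correct and follows the same Cauchy--Schwarz route as the paper: both rewrite $a$, $c$ as squared norms and $b$ as (twice) a cross inner product of two augmented vectors, then apply Cauchy--Schwarz. The paper normalises $\mv v$ first and appends the scalars $\psi,\tilde\psi$ rather than the blocks $\psi\mv v,\tilde\psi\mv v$, and phrases the conclusion via the angle between the two vectors, but the content is identical; if anything, your explicit use of $b=2\Re(\mv p^*\mv q)$ is slightly more careful about the fact that $\mv v$ may be complex.
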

\begin{proof}
    We normali\sz{}e $\mv v$ without loss of generality. By defining $\mv v_1 \coloneqq \tilde B^\trsp\mv v$ and $\mv v_2 \coloneqq B^\trsp\mv v$, we can write
    \begin{equation}
        a = \mv v_1^*\mv v_1 + \tilde\psi^2, \quad b = 2\mv v_2^*\mv v_1 + 2\psi\tilde\psi, \quad \text{and} \quad c = \mv v_2^*\mv v_2 + \psi^2    \mper
    \end{equation}
    Define $\mv{\widehat v_1}$ by appending $\tilde\psi$ to $\mv v_1$ and $\mv{\widehat v_2}$ by doing the same with $\psi$ and $\mv v_2$. Then the expression $b^2-4ac$ is equal to
    \begin{equation}
        4\dualp{\mv{\widehat v_1}}{\mv{\widehat v_2}}^2 - 4\dualp{\mv{\widehat v_1}}{\mv{\widehat v_1}}\dualp{\mv{\widehat v_2}}{\mv{\widehat v_2}} = 4(\cos^2(\omega)-1)\dualp{\mv{\widehat v_1}}{\mv{\widehat v_1}}\dualp{\mv{\widehat v_2}}{\mv{\widehat v_2}}    \mcom
    \end{equation}
    with $\omega$ the angle between $\mv{\widehat v_1}$ and $\mv{\widehat v_2}$. Since $-1 \le \cos(\omega)\le 1$, the lemma holds.
\end{proof}

Having proven the general \cref{lmm:proof-tr:1-theta2}, we can fill in our particular $B$ and $\tilde B$ matrices. If $(1-\theta)$ is an eigenvalue of $S_\sigma$, it holds that, for some $\mv v$,
\begin{equation}
    \abs{1-\theta}^2 = \frac{\mv v^*M_1\mv v + \norm{\mv v}_2^2(\psi-\tilde\psi)^2}{\mv v^*M_2\mv v + \norm{\mv v}_2^2\tilde\psi^2}
\end{equation}
with
\begin{equation} \label{eq:proof-tr:proof-tr:Ms}
    M_1 = \left[\begin{smallmatrix}
        (\tilde\varphi-\varphi)^2\\&\ddots\\&&(\tilde\varphi-\varphi)^2\\&&& 0
    \end{smallmatrix}\right] \quad \text{and} \quad M_2 = \left[\begin{smallmatrix}
        1+\tilde\varphi^2 & -\tilde\varphi\\
        -\tilde\varphi & \ddots & \ddots\\
        & \ddots & 1+\tilde\varphi^2 & -\tilde\varphi\\
        &&-\tilde\varphi & 1\\
    \end{smallmatrix}\right]    \mper
\end{equation}
We can normali\sz{}e $\mv v$ such that $\norm{\mv v}_2 = 1$ without altering the value of $\abs{1-\theta}$. Then
\begin{equation} \label{eq:proof-tr:proof-tr:bound1-theta}
\begin{aligned}
    \abs{1-\theta}^2 = \frac{\mv v^*M_1\mv v + (\psi-\tilde\psi)^2}{\mv v^*M_2\mv v + \tilde\psi^2}
      &\le \frac{(\tilde\varphi-\varphi)^2 + (\psi-\tilde\psi)^2}{\mv v^*M_2\mv v + \tilde\psi^2}
      \\&< \frac{(\tilde\varphi-\varphi)^2 + (\psi-\tilde\psi)^2}{(1-\tilde\varphi)^2 + \tilde\psi^2}    \mper
\end{aligned}
\end{equation}
The first inequality is valid since $\mv v^*M_1\mv v = (1-\abs{v_{\widehat\Tidx}}^2)(\tilde\varphi-\varphi)^2 \le (\tilde\varphi-\varphi)^2$, where $0 \le \abs{v_{\widehat\Tidx}} \le 1$ is the magnitude of the last element in $\mv v$. The second inequality is more involved, and is proven by the following lemma. Then, \cref{eq:proof-tr:proof-tr:bound1-theta} proves \cref{thm:conv:conv:tr-gen}.
\begin{lemma} \label{lmm:paraopt:M2eig}
    Any eigenvalue $\xi$ of $M_2$, defined in \cref{eq:proof-tr:proof-tr:Ms}, satisfies $\xi > (1-\tilde\varphi)^2$.
\end{lemma}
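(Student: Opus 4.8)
The plan is to bound the smallest eigenvalue of $M_2$ from below by $(1-\tilde\varphi)^2$ by exploiting the tridiagonal, almost-Toeplitz structure of $M_2$. Since $M_2$ is real symmetric, write $M_2 = N^\trsp N + R$ for a well-chosen lower-bidiagonal matrix $N$ and a remainder $R$ that one hopes is positive semidefinite, or — more directly — observe that $M_2$ is exactly a Gram-type matrix. Concretely, let $N$ be the $\widehat\Tidx\times\widehat\Tidx$ bidiagonal matrix with $1$ on the diagonal and $-\tilde\varphi$ on the first sub-diagonal; then $N^\trsp N$ has diagonal $(1+\tilde\varphi^2,\ldots,1+\tilde\varphi^2,1)$ and off-diagonals $-\tilde\varphi$, which is precisely $M_2$. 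So $M_2 = N^\trsp N$ and its eigenvalues are the squared singular values of $N$. The task reduces to showing $\sigma_{\min}(N)^2 > (1-\tilde\varphi)^2$, i.e. $\sigma_{\min}(N) > 1-\tilde\varphi$ (recall $0<\tilde\varphi<1$ by \cref{ass:conv:setting:3}, so $1-\tilde\varphi>0$).

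First I would make this singular-value claim precise. For any unit vector $\mv v$, I want $\|N\mv v\|_2 > (1-\tilde\varphi)$. Writing out $N\mv v$ componentwise, its entries are $v_1$ and $v_j - \tilde\varphi v_{j-1}$ for $j=2,\ldots,\widehat\Tidx$. A clean way to get the bound is to note $N = I - \tilde\varphi Z$ where $Z$ is the nilpotent down-shift (ones on the first sub-diagonal). Then $\|N\mv v\| \ge \|\mv v\| - \tilde\varphi\|Z\mv v\| \ge (1-\tilde\varphi)\|\mv v\|$ since $\|Z\|=1$; this already gives $\xi \ge (1-\tilde\varphi)^2$. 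To upgrade $\ge$ to the strict $>$ claimed in the lemma, I would argue that equality in the triangle inequality $\|\mv v - \tilde\varphi Z\mv v\| = \|\mv v\| - \tilde\varphi\|Z\mv v\|$ together with $\|Z\mv v\| = \|\mv v\|$ forces $Z\mv v = \mv v$ (the vectors $\mv v$ and $\tilde\varphi Z\mv v$ must be positive multiples of each other with the right norms), but $Z$ is nilpotent and hence has no eigenvector with eigenvalue $1$ — contradiction. Therefore the inequality is strict for every unit $\mv v$, and since the unit sphere is compact and $\mv v\mapsto\|N\mv v\|$ continuous, $\sigma_{\min}(N)$ itself is strictly larger than $1-\tilde\varphi$, giving $\xi = \sigma_{\min}(N)^2 > (1-\tilde\varphi)^2$.

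An alternative, more self-contained route avoids the compactness remark: expand $\|N\mv v\|_2^2$ directly and complete squares. One has
\begin{equation*}
    \|N\mv v\|_2^2 = v_1^2 + \sum_{j=2}^{\widehat\Tidx}(v_j - \tilde\varphi v_{j-1})^2 = \sum_{j=1}^{\widehat\Tidx} v_j^2 - 2\tilde\varphi\sum_{j=2}^{\widehat\Tidx} v_j v_{j-1} + \tilde\varphi^2\sum_{j=2}^{\widehat\Tidx} v_{j-1}^2,
\end{equation*}
and then bound the cross term via $-2v_jv_{j-1} \ge -(v_j^2 + v_{j-1}^2)$, which after reindexing yields $\|N\mv v\|_2^2 \ge (1-\tilde\varphi)^2 + \tilde\varphi(1-\tilde\varphi)(v_1^2 + v_{\widehat\Tidx}^2)$ or a similar explicitly positive surplus; since $\mv v\neq\mv0$ and $\tilde\varphi(1-\tilde\varphi)>0$, the surplus cannot vanish for all coordinates simultaneously once one tracks which $v_j^2$ terms survive, giving the strict inequality. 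The main obstacle is precisely getting the \emph{strict} inequality rather than the easy non-strict one: the naive operator-norm bound is tight in the limit of long Jordan-like vectors, so some genuine use of finite dimension / nilpotency (or a careful completion of squares that keeps a strictly positive leftover) is needed. I expect the nilpotency argument to be the cleanest, but I would double-check the edge case $\widehat\Tidx = 1$, where $M_2 = [1]$ and the bound $1 > (1-\tilde\varphi)^2$ is immediate.
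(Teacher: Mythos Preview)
Your proof is correct and takes a genuinely different route from the paper's. The paper invokes an explicit eigenvalue formula from the literature for a nearby tridiagonal matrix $\widehat M_2$ (differing from $M_2$ only in the last diagonal entry, $1-\tilde\varphi+\tilde\varphi^2$ instead of $1$), checks directly that those eigenvalues $\widehat\xi_j = 1+\tilde\varphi^2 + 2\tilde\varphi\cos\frac{2j\pi}{2\widehat\Tidx+1}$ exceed $(1-\tilde\varphi)^2$ because the cosine never hits $-1$, and then notes that $M_2 = \widehat M_2 + (\tilde\varphi-\tilde\varphi^2)\,e_{\widehat\Tidx}e_{\widehat\Tidx}^\trsp$ with $\tilde\varphi-\tilde\varphi^2>0$, so the smallest eigenvalue can only go up.

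Your factorisation $M_2 = N^\trsp N$ with $N = I - \tilde\varphi Z$ is more elementary and self-contained: it needs no external eigenvalue formula, only the reverse triangle inequality, $\|Z\|\le 1$, and the nilpotency of $Z$ to rule out the equality case (followed by compactness of the unit sphere to pass from a pointwise strict inequality to $\sigma_{\min}(N)>1-\tilde\varphi$). This is arguably cleaner and more robust. The paper's approach, by contrast, yields the exact eigenvalues of a neighbouring matrix for free, which could be handy for quantitative refinements. Your completion-of-squares alternative also goes through: the surplus is $\tilde\varphi v_1^2 + \tilde\varphi(1-\tilde\varphi)v_{\widehat\Tidx}^2$ (slightly different from what you wrote), and if both $v_1$ and $v_{\widehat\Tidx}$ vanish then equality in each $-2v_jv_{j-1}\ge -v_j^2-v_{j-1}^2$ forces all $v_j$ equal, hence zero --- but the nilpotency argument is indeed the cleanest.
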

\begin{proof}
    From \cite[Theorem 4]{betterThanYueh}, one can deduce that the matrix
    \begin{equation}
        \widehat M_2 \coloneqq \left[\begin{smallmatrix}
            1+\tilde\varphi^2 & -\tilde\varphi\\
            -\tilde\varphi & 1+\tilde\varphi^2 & \ddots\\
            & \ddots & \ddots & -\tilde\varphi\\
            &&-\tilde\varphi & 1-\tilde\varphi+\tilde\varphi^2\\
        \end{smallmatrix}\right]
    \end{equation}
    has eigenvalues
    \begin{equation} \label{eq:paraopt:M2eig:xihat}
        \widehat\xi_j = 1 + \tilde\varphi^2 + 2\tilde\varphi\cos\frac{2j\pi}{2\widehat\Tidx+1}, \quad j = 1, \ldots, \widehat\Tidx    \mcom
    \end{equation}
    which means $\widehat\xi_j > (1-\tilde\varphi)^2$. To transform $\widehat M_2$ into $M_2$, one adds $(\tilde\varphi - \tilde\varphi^2)$ to the last diagonal element. Since this is a positive number (\cref{ass:conv:setting:3} ensures that $0<\tilde\varphi<1$), it cannot reduce the minimum eigenvalue of this symmetric matrix (as follows from \cite[Theorem 10.3.1]{parlettSymmetricEigenvalueProblem1998}). This proves the lemma.
\end{proof}

\section{Proof of \cref{thm:conv:conv:tc-gen}} \label{sec:proof-tc}
    Given the discussion in \cref{sec:conv:setting} it holds that $\rho=\max_{\sigma\in\mathrm{eig}(K)}\max(\kern2pt\abs{\mathrm{eig}(S_\sigma)})$, with $S_\sigma$ given by \cref{eq:conv:setting:Ssigma:tc}. Define $B$ and $\tilde B$ such that
\begin{equation} \label{eq:proof-tc:proof-tc:Ssigma}
    S_\sigma = I - \begin{litmat}
        \tilde B & \tilde\psi I\\
        -E & \tilde B^\trsp\\
    \end{litmat}^{-1}\begin{litmat}
        B & \psi I\\
        -E & B^\trsp\\
    \end{litmat}    \mcom
\end{equation}
where $E$ is all-zero except for a one in the bottom-right corner. This is analogous to the proof in \cref{sec:proof-tr}. However, we have no equivalent of \cref{lmm:paraopt:b^2-4ac}, so we will need to deal with separate cases for non-real and real eigenvalues of \cref{eq:proof-tc:proof-tc:Ssigma}.

\paragraph{Non-real eigenvalues}
We start by looking at the non-real case.

\begin{lemma} \label{lmm:proof-tc:1-theta2}
    Let $\theta$ be an eigenvalue with non-zero imaginary part of
    \begin{equation}
        \begin{litmat}
            \tilde B & \tilde \psi I\\
            -E & \tilde B^\trsp\\
        \end{litmat}^{-1}\begin{litmat}
            B & \psi I\\
            -E & B^\trsp\\
        \end{litmat}
    \end{equation}
    with $B,\tilde B \in \mathbb R^{\widehat\Tidx\times\widehat\Tidx}$ and where $E$ is as in \cref{eq:proof-tc:proof-tc:Ssigma}. Then, for some vector $\mv v \in \mathbb C^{\widehat\Tidx}$,
    \begin{equation}
        \abs{1 - \theta}^2 = \frac{\mv v^*((B-\tilde B)(B-\tilde B)^\trsp)\mv v}{\mv v^*(\tilde B \tilde B^\trsp+\tilde\psi E)\mv v}    \mper
    \end{equation}
\end{lemma}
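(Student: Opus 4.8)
The plan is to follow the proof of \cref{lmm:proof-tr:1-theta2} closely; the only change in the matrices is that the lower-left blocks $-\psi I$ and $-\tilde\psi I$ have become $-E$. So I would start from the generalized eigenvalue problem: if $1-\theta\in\mathrm{eig}(S_\sigma)$ then $\theta$ is an eigenvalue of the matrix in the statement, with some eigenvector $(\mv v^\trsp,\mv w^\trsp)^\trsp$, giving $B\mv v+\psi\mv w=\theta(\tilde B\mv v+\tilde\psi\mv w)$ and $-E\mv v+B^\trsp\mv w=\theta(-E\mv v+\tilde B^\trsp\mv w)$. The first relation rearranges to $(\theta\tilde B-B)\mv v=(\psi-\theta\tilde\psi)\mv w$; since $\theta\notin\mathbb R$ and $\tilde\psi>0$ by \cref{ass:conv:setting:3}, the scalar $\psi-\theta\tilde\psi$ cannot vanish, so $\mv w=\frac{1}{\psi-\theta\tilde\psi}(\theta\tilde B-B)\mv v$ (and $\mv v\neq\mv 0$, else $\mv w=\mv 0$ as well).

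I would then substitute this expression into the second block equation, clear the denominator, left-multiply by $\mv v^*$, and collect powers of $\theta$. Exactly as in \cref{sec:proof-tr} this yields a quadratic $\hat a\theta^2-\hat b\theta+\hat c=0$, now with $\hat a=\mv v^*(\tilde B^\trsp\tilde B+\tilde\psi E)\mv v$, $\hat b=\mv v^*(\tilde B^\trsp B+B^\trsp\tilde B+(\psi+\tilde\psi)E)\mv v$ and $\hat c=\mv v^*(B^\trsp B+\psi E)\mv v$: the rank-one matrix $E$ simply plays the role that $\tilde\psi^2 I$, $\psi\tilde\psi I$ and $\psi^2 I$ played in the tracking case. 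Because each matrix in parentheses is real and symmetric — here one uses $E^\trsp=E$ — the coefficients $\hat a,\hat b,\hat c$ are real, and $\hat a>0$ since $\tilde B$ has unit diagonal hence is invertible. As $\theta$ is non-real and the coefficients real, the discriminant is strictly negative, so $\Re\theta=\hat b/(2\hat a)$ and $\Im(\theta)^2=\hat c/\hat a-\hat b^2/(4\hat a^2)$, whence $\abs{1-\theta}^2=(1-\Re\theta)^2+\Im(\theta)^2=1-\hat b/\hat a+\hat c/\hat a=(\hat a-\hat b+\hat c)/\hat a$. Simplifying the numerator finishes it: the $E$-terms cancel ($\tilde\psi-(\psi+\tilde\psi)+\psi=0$) and $\tilde B^\trsp\tilde B-\tilde B^\trsp B-B^\trsp\tilde B+B^\trsp B=(B-\tilde B)^\trsp(B-\tilde B)$, which is the identity in the statement (up to the harmless placement of the transpositions, $X^\trsp X$ being cospectral with $XX^\trsp$ — I would reconcile the exact written form against the analogous computation in \cite{wuParallelInTimeBlockCirculantPreconditioner2020a}).

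The genuinely new feature, and the reason the lemma restricts to non-real $\theta$, is that there is no counterpart of \cref{lmm:paraopt:b^2-4ac} here: the discriminant of the quadratic need not stay non-positive over all eigenvalues, so the formula above is valid only once $\theta$ is known to be non-real, and the real eigenvalues of \cref{eq:proof-tc:proof-tc:Ssigma} must be handled by a separate argument afterward. Within this lemma the only delicate points are the two divisions (by $\psi-\theta\tilde\psi$ and by $\hat a$, both justified above) and the reality of $\hat a,\hat b,\hat c$ — precisely where the symmetry of $E$ is used; I expect the rest to be routine algebra.
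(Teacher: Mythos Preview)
Your proposal is correct and follows essentially the same route as the paper's proof: derive $\mv w$ from the first block equation, substitute into the second, left-multiply by $\mv v^*$ to obtain a real-coefficient quadratic in $\theta$, and then use the non-reality of $\theta$ to conclude $\abs{1-\theta}^2=1+\hat c/\hat a-\hat b/\hat a$. Your extra care in justifying $\psi-\theta\tilde\psi\neq0$ and $\hat a>0$ is welcome (the paper omits these), and you are right that the numerator and denominator naturally come out as $(B-\tilde B)^\trsp(B-\tilde B)$ and $\tilde B^\trsp\tilde B+\tilde\psi E$ --- indeed the paper's own proof ends with that form, so the $XX^\trsp$ ordering in the lemma statement is a typo rather than something to reconcile.
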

\begin{proof}
    The proof is similar to that of \cref{lmm:proof-tr:1-theta2}. We introduce an eigenvalue $\theta$ and its corresponding eigenvector $(\mv v^\trsp, \mv w^\trsp)^\trsp$, such that
    \begin{subequations} \label{eq:proof-tc:proof-tc:eig}
    \begin{align}
        \label{eq:proof-tc:proof-tc:eig:1} B\mv v + \psi\mv w &= \theta(\tilde B\mv v + \tilde\psi\mv w)    \mcom\\
        \label{eq:proof-tc:proof-tc:eig:2} -E\mv v + B^\trsp\mv w &= \theta(-E\mv v + \tilde B^\trsp \mv w)    \mcom
    \end{align}
    \end{subequations}
    and thus $\mv w = \frac{\theta\tilde B - B}{\psi-\theta\tilde\psi}\mv v$. After filling this into \cref{eq:proof-tc:proof-tc:eig:2} and left-multiplying by $\mv v^*$, we can manipulate everything into the quadratic equation
    \begin{equation}
    \begin{aligned}
        a\theta^2 - b\theta + c \coloneqq& \mv v^*(\tilde B^\trsp \tilde B + \tilde\psi E)\mv v \theta^2 \\{}-{}& \mv v^*(B^\trsp \tilde B + (\psi+\tilde\psi)E + \tilde B^\trsp B)\mv v \theta \\{}+{}& \mv v^*(B^\trsp B + \psi E)\mv v = 0
    \end{aligned}
    \end{equation}
    with solution $\theta_\pm = \frac{b}{2a} \pm \sqrt{\left(\frac{b}{2a}\right)^2 - \frac c a}$.
    \clearpage

    Since we look for solutions $\theta$ with non-zero imaginary part, the contents of the square root must be negative. Then, similarly to before,
    \begin{equation}
        \abs{1-\theta}^2 = 1+\frac ca - \frac ba = \frac{\mv v^*((B-\tilde B)^\trsp(B-\tilde B))\mv v}{\mv v^*(\tilde B^\trsp\tilde B + \tilde\psi E)\mv v}    \mper
    \end{equation}
\end{proof}

We utili\sz{}e this lemma in the same way as before, but now must keep in mind that its result is only guaranteed for non-real eigenvalues. In that case, we can state
\begin{equation} \label{eq:proof-tc:proof-tc:complex}
    \abs{1-\theta} = \sqrt\frac{\mv v^*M_1\mv v}{\mv v^*M_2\mv v + \tilde\psi\abs{v_{\widehat\Tidx}}^2} \le \frac{\abs{\varphi-\tilde\varphi}}{1-\tilde\varphi}
\end{equation}
where we recall $M_1$ and $M_2$ from \cref{eq:proof-tr:proof-tr:Ms}.

\paragraph{Real eigenvalues}
When the contents of the square root in \cref{eq:proof-tc:proof-tc:eig}'s solution are non-negative, the reasoning breaks down. This case can be treated in a different way. By left-multiplying \cref{eq:proof-tc:proof-tc:eig:1,eq:proof-tc:proof-tc:eig:2} by $\mv w^*$ and $\mv v^*$, respectively, and then subtracting the complex adjoint of the latter from the former, we find that
\begin{equation}
    \mv w^*\psi\mv w + \mv v^*E\mv v = \theta(\mv w^*\tilde\psi\mv w + \mv v^*E\mv v) \Leftrightarrow \theta = \frac{\psi\norm{\mv w}_2^2 + \abs{v_{\widehat\Tidx}}^2}{\tilde\psi\norm{\mv w}_2^2 + \abs{v_{\widehat\Tidx}}^2}    \mcom
\end{equation}
such that, shifting our focus to $(1-\theta)$, we obtain
\begin{equation}
    (1-\theta) = \frac{(\tilde\psi-\psi)\norm{\mv w}_2^2}{\tilde\psi\norm{\mv w}_2^2 + \abs{v_{\widehat\Tidx}}^2}    \mper
\end{equation}
As shown in \cite[(3.24) and (3.28)]{ganderPARAOPTPararealAlgorithm2020a}, it holds that $v_{\widehat\Tidx} = w_{\widehat\Tidx}$ and $w_\tidx = w_{\widehat\Tidx}(\tilde\varphi+\frac{\varphi-\tilde\varphi}{1-\theta})^{\widehat\Tidx-\tidx}$. A normali\sz{}ation such that $w_{\widehat\Tidx}=1$ and the definition $x \coloneqq (1 - \theta)$ then yield
\begin{equation} \label{eq:proof-tc:proof-tc:sumfrac}
    x = \frac{\tilde\psi-\psi}{\tilde\psi + 1/\sum_{\tidx=0}^{\widehat\Tidx-1}(\tilde\varphi+\frac{\varphi-\tilde\varphi}{x})^{2\tidx}} \Leftrightarrow f_{\widehat\Tidx}(x) \coloneqq \frac{\tilde\psi-\psi}{\tilde\psi + 1/\sum_{\tidx=0}^{\widehat\Tidx-1}(\tilde\varphi+\frac{\varphi-\tilde\varphi}{x})^{2\tidx}} - x = 0 \mper
\end{equation}
Define $g(x) \coloneqq (\tilde\psi-\psi)/\tilde\psi - x$. We have, depending on $\mathrm{sign}(\tilde\psi-\psi)$,
\begin{equation}
    \forall x: f_{\widehat\Tidx}(x) \le f_\infty(x) \le g(x) \quad \text{or} \quad \forall x: f_{\widehat\Tidx}(x) \ge f_\infty(x) \ge g(x)    \mper
\end{equation}
Now denote by $x_1$ a root of $f_{\widehat\Tidx}$, and by $x_2$ the root of $g$. Clearly,
\begin{equation} \label{eq:proof-tc:proof-tc:x12}
    \mathrm{sign}(x_1) = \mathrm{sign}(x_2) \quad \text{and} \quad \abs{x_1} \le \abs{x_2}    \mper
\end{equation}
This means that $\min\{f_\infty(x_1), f_\infty(x_2)\} \le 0 \le \max\{f_\infty(x_1), f_\infty(x_2)\}$. Since $f_{\widehat\Tidx}$, $f_\infty$, and $g$ are continuous, Bolzano's theorem asserts that
\begin{equation}
    f_\infty(x^*) = 0 \quad \text{for some} \quad \min\{x_1, x_2\} \le x^* \le \max\{x_1, x_2\}    \mcom
\end{equation}
which, by \cref{eq:proof-tc:proof-tc:x12}, means that $\abs{x^*} \ge \abs{x_1}$. Recall that any \emph{real} eigenvalue $x = (1-\theta)$ of the ParaOpt iteration matrix must be a root of $f_{\widehat\Tidx}$; by the above argument, then, $\abs{1-\theta}$ is bounded from above by the absolute value of at least one root of
\begin{equation}
    f_\infty(x) = \frac{\tilde\psi-\psi}{\tilde\psi + 1/\sum_{\tidx=0}^{\infty}(\tilde\varphi+\frac{\varphi-\tilde\varphi}{x})^{2\tidx}} - x    \mper
\end{equation}
This root is efficiently computable: if $x=(\tilde\psi-\psi)/\tilde\psi$ makes the infinite sum diverge, it is a root (and the one with the largest absolute value). Otherwise, all roots must have the sum converge, which can then be replaced by $(1-(\tilde\varphi+\frac{\varphi-\tilde\varphi}{x})^{2})^{-1}$. Then finding $f_\infty$'s roots amounts to solving a quadratic equation and checking when the sum converges. Our numerical tests suggest that $f_\infty$ always has exactly one root.

\section{Numerical results} \label{sec:num}
    \Cref{sec:conv} presents upper bounds on the ParaOpt convergence factor for linear diffusive problems. \Cref{sec:num:conv-tr,sec:num:conv-tc} will study the accuracy and sharpness of these bounds through numerical tests. Later, \cref{sec:num:prec} looks at the performance of the proposed preconditioners.

Our tests confirm the accuracy of our bounds and the scalability of preconditioned ParaOpt. To perform them, we have extended the \texttt{pintopt} package\footnote{The version of the \texttt{pintopt} \textsc{Matlab} package used here and code to reproduce our results are located at \url{https://gitlab.kuleuven.be/numa/public/pintopt}. To keep this paper reproducible, new additions and bugfixes are tracked only at \url{https://github.com/ArneBouillon/pintopt}.} designed in \cite{mezelfparadiag} to include a sequential implementation of ParaOpt and its preconditioners. Though unoptimi\sz{}ed, it is a useful reference solver and can be used to study iteration counts.

    \subsection{Assessing the bounds for tracking} \label{sec:num:conv-tr}
    Consider the scalar equation
\begin{equation}
    y'(t) = -\sigma y(t) + u(t), \quad y(0) = y_\mathrm{init}
\end{equation}
with a tracking objective function, analogously to a test case in \cite{ganderPARAOPTPararealAlgorithm2020a}. We follow \cite{ganderPARAOPTPararealAlgorithm2020a} in setting $\gamma=1$, $\sigma=16$ and $T=1$ and compare the true spectral radius $\rho$ of ParaOpt's iteration matrix \cref{eq:conv:setting:Ssigma} to the bound $\rho^*$ set by \cref{eq:thm:conv:conv:tr-gen:bound}.

Figure \ref{fig:po-num:scalar:vartimestep} uses $\widehat\Tidx=100$ and looks at the influence of the time steps. In the left figure, the coarse step $\Dt=\DT$ is kept fixed and the fine one $\dt$ is varied. The bound $\rho^*$ is correct and rather sharp, and $\rho$ becomes steady since $\dt\rightarrow0$ corresponds to the limit of an exact solver. The right figure fixes the fine time step $\dt=10^{-5}\DT$ and varies the coarse one. Expectedly, a small coarse time step leads to faster convergence.

To confirm that this spectral radius has the impact on convergence expected from \cref{sec:conv:setting}, ParaOpt has been executed on two scalar problems. Both use $\Tidx=50$ and $T=50$, with $y_\mathrm{init} = y_\mathrm d(\cdot) = 1$. The fine propagator is exact while the coarse one uses 10 steps of implicit Euler. The parameter sets, called A and B, are displayed in \cref{fig:po-num:scalar:conv-ill:1}: case A uses $\widehat\sigma=10^{-6}$ and $\widehat\gamma=6$, while case B uses $\widehat\sigma=0.0006$ and $\widehat\gamma=0.4$. They are overlaid on a copy of \cref{fig:conv:interp:rhostar:tr-10}, which shows $\rho^*$ for these propagators. Parameters B lead to a smaller $\rho^*$, so ParaOpt may be expected to converge faster for that problem than for A; \cref{fig:po-num:scalar:conv-ill:2} confirms this. In addition, the residuals decrease at a rate close to the bound, confirming its accuracy.

Next, consider the weak-scaling regimes from \cref{sec:diag:scale}. We use an exact fine propagator and a one-step implicit-Euler coarse one. Recall that the upper bound is $\widehat\Tidx$-independent, which guarantees weak scalability for fixed $\DT$. \Cref{fig:po-num:scalar:scaling:fixedDT} confirms the bound is sharp. For fixed $T$ (i.e.\ukus{}{,}\ decreasing $\DT$), \cref{fig:po-num:scalar:scaling:fixedT} shows good scalability.

\begin{figure}
    \centering
    \begin{subfigure}[c]{.38\textwidth}
        \includegraphics[width=\textwidth]{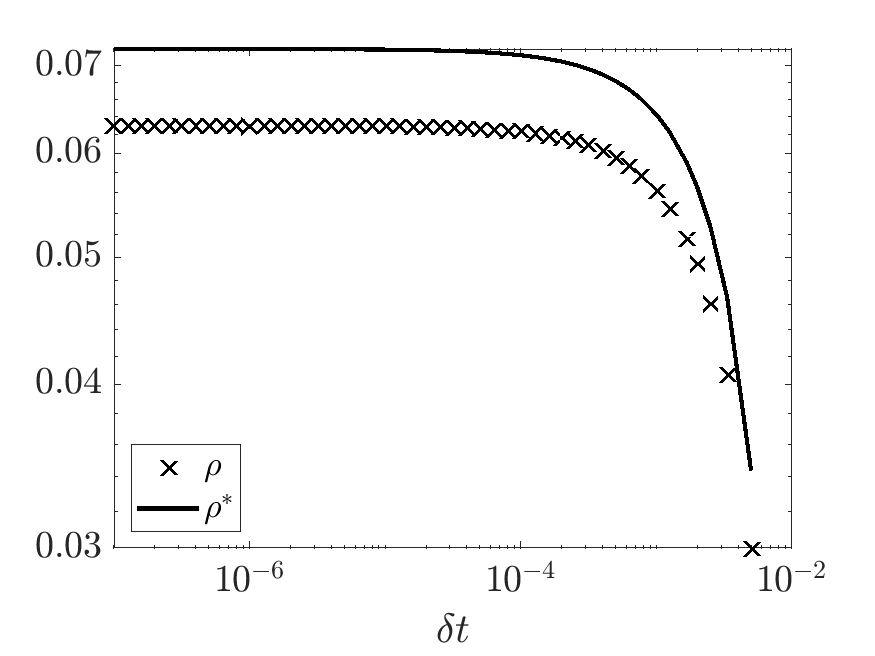}
    \end{subfigure}
    \hfill
    \begin{subfigure}[c]{.38\textwidth}
        \includegraphics[width=\textwidth]{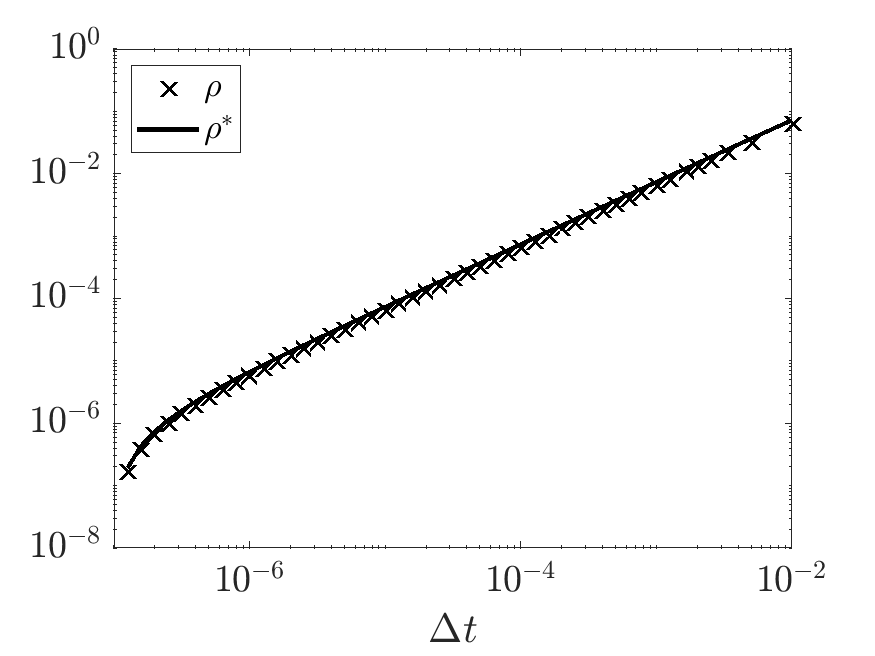}
    \end{subfigure}
    \caption{
        Spectral radius $\rho$ and bound $\rho^*$ from \cref{eq:thm:conv:conv:tr-gen:bound} of the ParaOpt iteration matrix for a fixed coarse time step $\Dt=\DT$ (left) and a fixed fine time step $\dt = 10^{-5}\DT$ (right), for the tracking problem with implicit-Euler propagators from \cref{sec:num:conv-tr}
        \vspace{-.9cm}
    }
    \label{fig:po-num:scalar:vartimestep}
\end{figure}

\begin{figure}
    \centering
    \begin{subfigure}[b!]{.42\textwidth}
        \includegraphics[width=\textwidth]{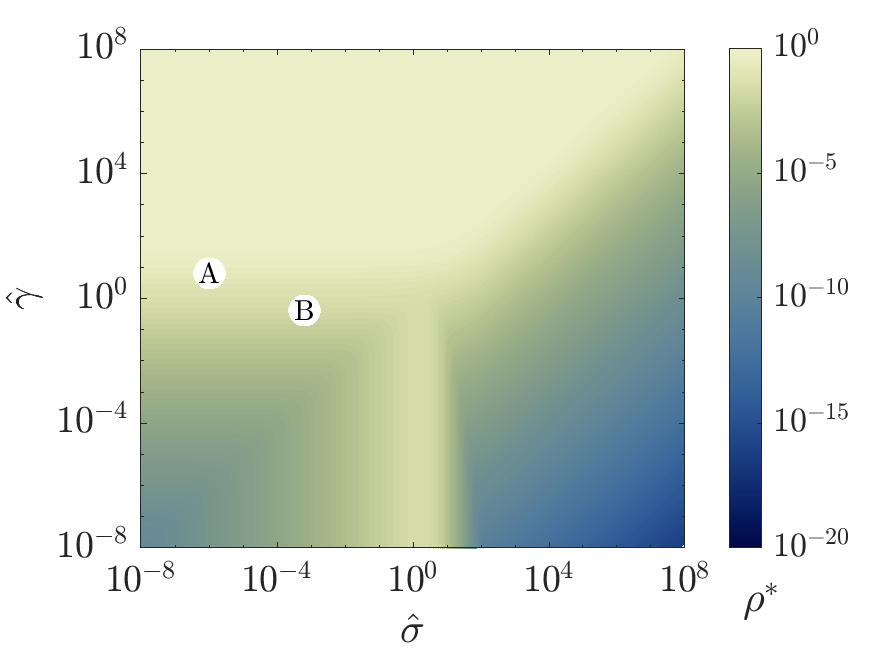}
        \caption{
            $\rho^*$ for parameter choices A and B
            \vspace{-.6cm}
        }
        \label{fig:po-num:scalar:conv-ill:1}
    \end{subfigure}
    \hfill
    \begin{subfigure}[b!]{.42\textwidth}
        \includegraphics[width=\textwidth]{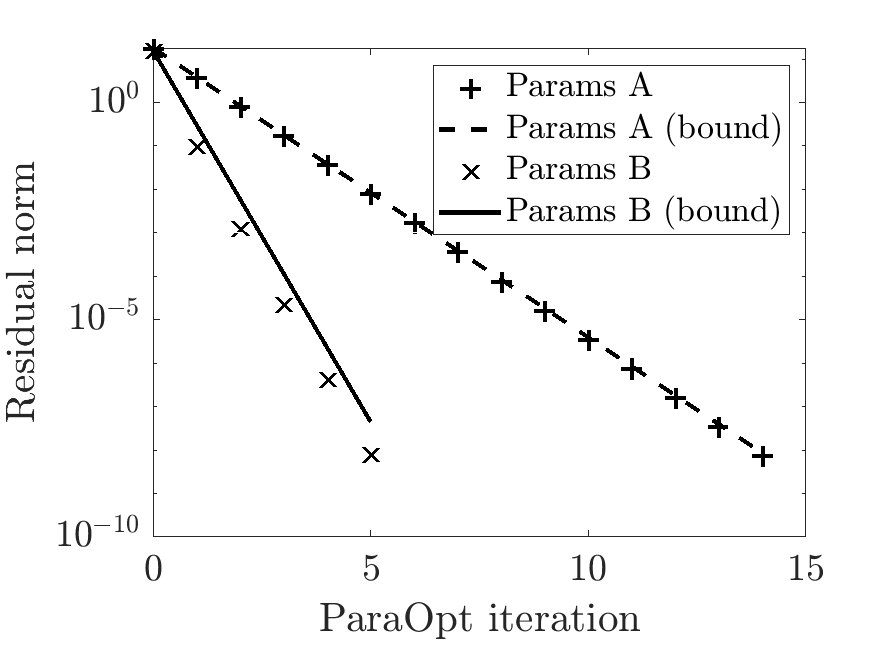}
        \caption{
            True convergence and rate \cref{eq:thm:conv:conv:tr-gen:bound}
            \vspace{-.6cm}
        }
        \label{fig:po-num:scalar:conv-ill:2}
    \end{subfigure}
    \caption{
        Scalar tracking ParaOpt with exact $\mathcal P/\mathcal Q$ and 10-step implicit-Euler $\tilde{\mathcal P}/\tilde{\mathcal Q}$, using the parameters from \cref{sec:num:conv-tr}
        \vspace{-.6cm}
    }
    \label{fig:po-num:scalar:conv-ill}
\end{figure}

\begin{figure}
    \centering
    \begin{subfigure}[c]{.4\textwidth}
        \includegraphics[width=\textwidth]{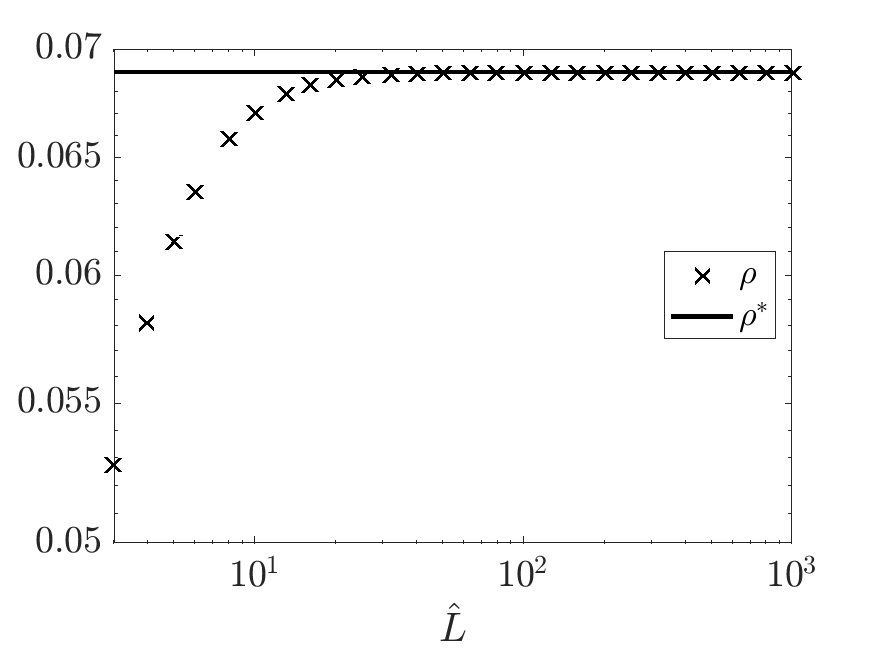}
        \caption{
            Fixed sub-interval size $\DT=1$
            \vspace{-.6cm}
        }
        \label{fig:po-num:scalar:scaling:fixedDT}
    \end{subfigure}
    \hfill
    \begin{subfigure}[c]{.4\textwidth}
        \includegraphics[width=\textwidth]{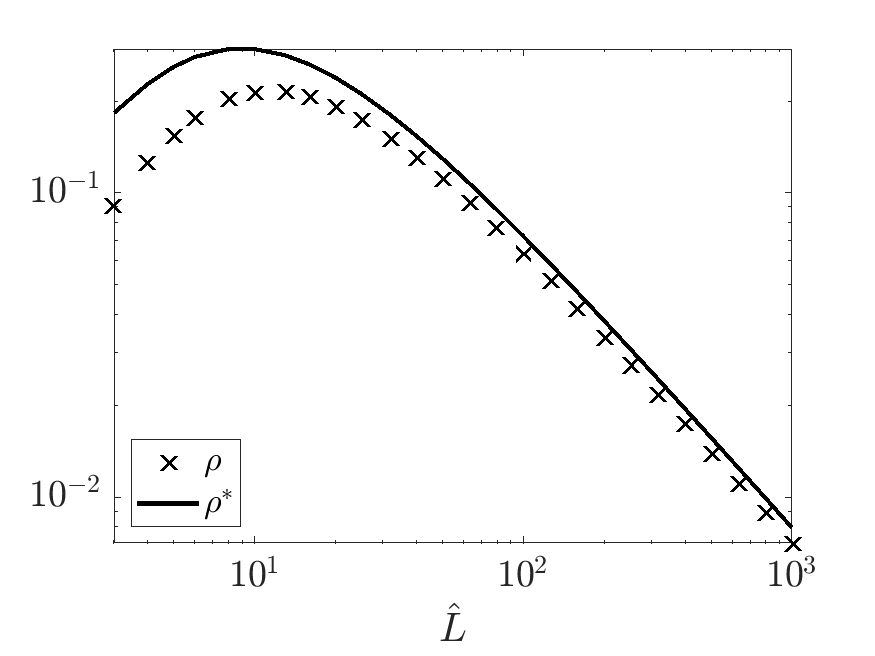}
        \caption{
            Fixed total interval size $T=1$
            \vspace{-.6cm}
        }
        \label{fig:po-num:scalar:scaling:fixedT}
    \end{subfigure}
    \caption{
        Spectral radius $\rho$ and bound $\rho^*$ from \cref{eq:thm:conv:conv:tr-gen:bound} of the tracking ParaOpt iteration matrix, using the parameters from \cref{sec:num:conv-tr}
        \vspace{-.7cm}
    }
    \label{fig:po-num:scalar:scaling}
\end{figure}

    \subsection{Assessing the bounds for terminal cost} \label{sec:num:conv-tc}
    \begin{figure}
    \centering
    \begin{subfigure}{.4\textwidth}
        \includegraphics[width=\textwidth]{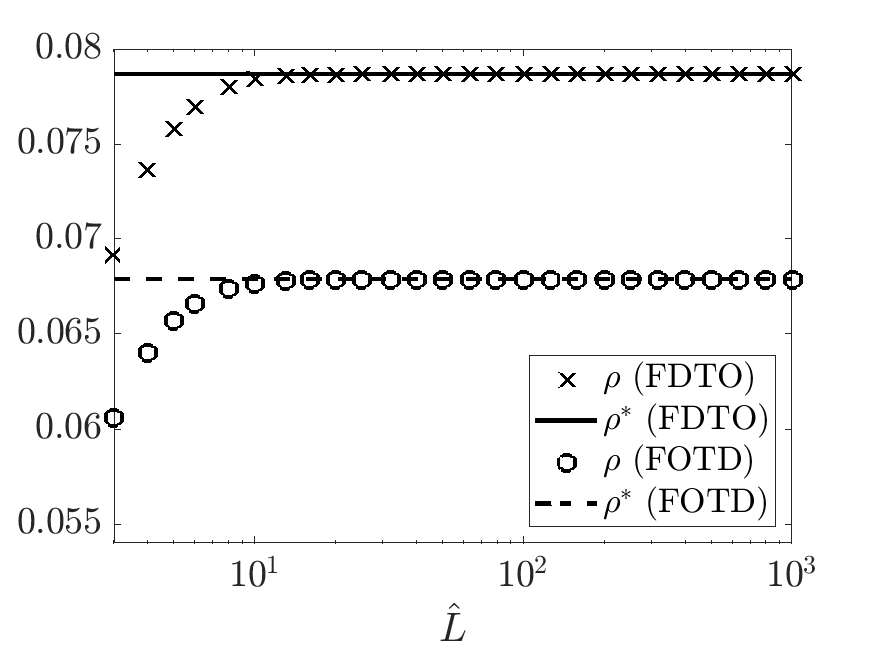}
    \end{subfigure}
    \hfill
    \begin{subfigure}{.4\textwidth}
        \includegraphics[width=\textwidth]{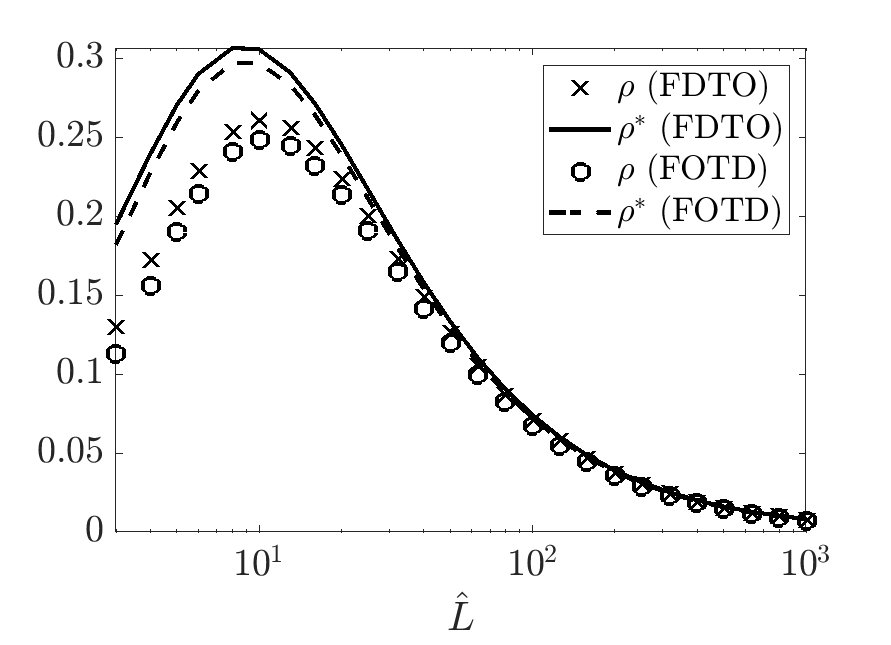}
    \end{subfigure}
    \vfill
    \begin{subfigure}{.4\textwidth}
        \includegraphics[width=\textwidth]{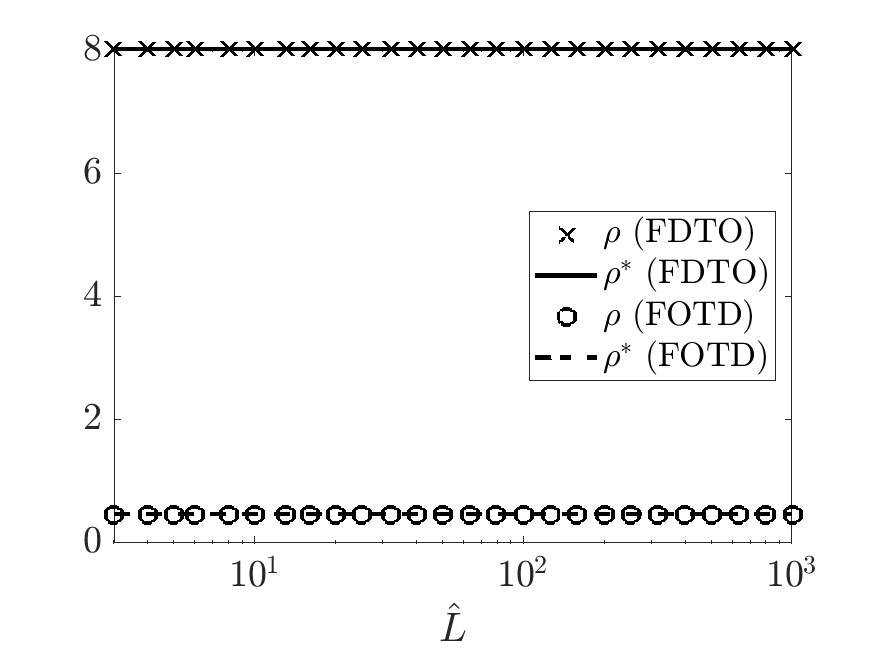}
    \end{subfigure}
    \hfill
    \begin{subfigure}{.4\textwidth}
        \includegraphics[width=\textwidth]{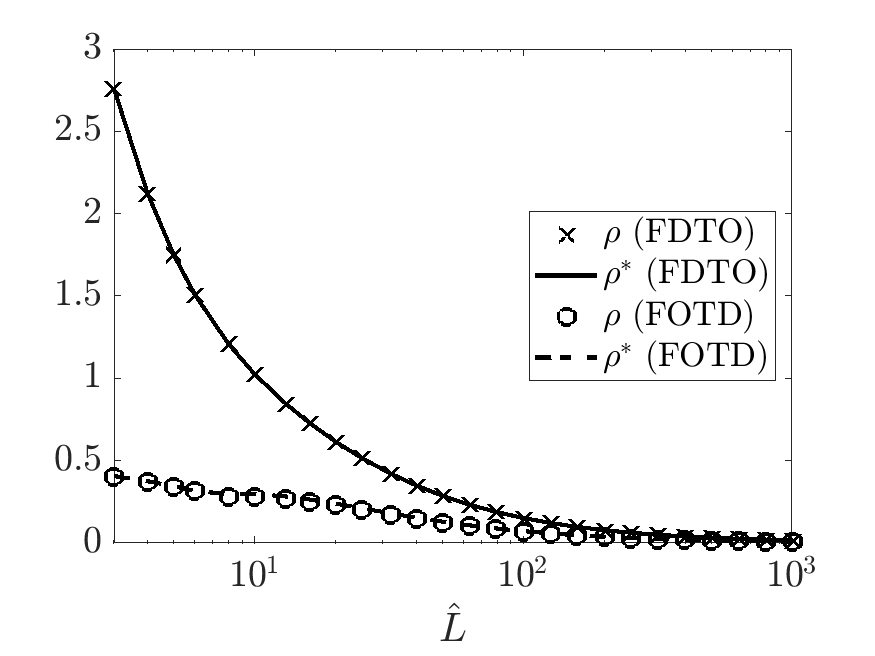}
    \end{subfigure}

    \caption{Equivalents of \cref{fig:po-num:scalar:scaling} for terminal cost with $\gamma=1$ (first row) and $\gamma=\texttt{1e-6}$ (second row)}
    \label{fig:num:conv-tc:scaling}
\end{figure}

The experiments in \cref{sec:num:conv-tr} can be repeated for the terminal-cost case, with the target trajectory $y_\mathrm d(\cdot)=1$ replaced by a target state $y_\mathrm{target} = 1$. In a bid not to qualitatively repeat experiments from \cite{ganderPARAOPTPararealAlgorithm2020a}, we focus on the main difference between our results: the switch from \textsc{fdto} to \textsc{fotd} for the coarse propagator, whose analysis is enabled by our more general convergence bound in \cref{thm:conv:conv:tc-gen}. \Cref{fig:num:conv-tc:scaling} repeats the scaling test from \cref{fig:po-num:scalar:scaling} in the terminal-cost setting. We compare a medium $\gamma=1$ to a very small $\gamma=10^{-6}$ (that is, a large $\widehat\gamma$, meaning control is cheap). As expected from \cref{fig:conv:interp:rhostar:fdtovsfotd}, \textsc{fotd} and \textsc{fdto} differ little in the former case and a lot in the latter. Herein lies the advantage of the \textsc{fotd} implicit-Euler coarse propagator: ParaOpt converges for all linear diffusive problems, not just those with sufficiently small time steps.

    \subsection{Preconditioning} \label{sec:num:prec}
    As discussed in \cref{sec:diag:scale}, the two main processes to study in ParaOpt are the outer inexact-Newton iterations and the inner linear-system solves that constitute the inexact-Newton corrections. The past two subsections have confirmed the scalability of the former -- now, we verify whether the proposed preconditioners succeed in completing the picture by keeping inner iterations constant when the number of parallel time intervals is scaled.

\paragraph{Heat problem}
We study the same heat problem considered in \cite{mezelfparadiag} (and loosely adapted from \cite{emmettEfficientParallelTime2012}), defined on a periodic spatial domain $\Omega = [0, 1]^2$ and given by
\begin{equation}
    \partial_ty = \Delta y + u    \mcom
\end{equation}
where the spatial derivative is discreti\sz{}ed using central differences. We also use \cite{mezelfparadiag}'s initial value, target trajectory and target state
\begin{align*}
    y_\mathrm{init}(x) &= \frac1{12\pi^2\gamma}(1-T)\mathrm{sign}(\sin(2\pi x_1))\sin^2(2\pi x_2)\\
    y_\mathrm{target}(x) &= \sin(2\pi x_1)\sin(2\pi x_2)\\
    y_\mathrm d(t, x) &= \Bigl((12\pi^2+(12\pi^2\gamma)^{-1})(t-T)-(1+((12\pi^2)^2\gamma)^{-1})\Bigr)\sin(2\pi x_1)\sin(2\pi x_2)    \mper
\end{align*}
A non-smooth $y_\mathrm{init}$ is important to make the preconditioning problem sufficiently challenging, as also noticed in \cite{goddardNoteParallelPreconditioning2019,wuParallelInTimeBlockCirculantPreconditioner2020a}. We use the same parameters $\gamma=0.05$ and $T=2$ as \cite{gotschelEfficientParallelinTimeMethod2019a}, and a spatial grid of $\Sidx=8\times8$ points (a small value, to keep the non-preconditioned computation tractable). The fine propagator uses $10$ implicit-Euler steps; the coarse one just $1$. All further results will use a tolerance of $10^{-6}$ for ParaOpt, and will employ a \textsc{gmres} inner solver with a tolerance of $10^{-4}$.
\begin{remark}[ParaOpt's inner \textsc{gmres} tolerance]
    A significant difference between ParaDiag's use as a stand-alone method and its adaptation as a ParaOpt preconditioner is the tolerance to which the system should be solved. Since ParaOpt solves a system with a Jacobian that is already approximate, there is no need to solve the system to full precision. Figure \ref{fig:po-num:prec:gmrestol} shows that, for the heat problem we are about to study with $\widehat\Tidx=10$, $T=2$, $\gamma=0.05$, $\Sidx=8\times8$, a ten-step fine and a one-step coarse implicit-Euler propagator, a tolerance as high as $10^{-3}$ already performs well.
\end{remark}

\begin{figure}[h!]
    \centering
    \begin{subfigure}[c]{.49\textwidth}
        \includegraphics[width=\textwidth]{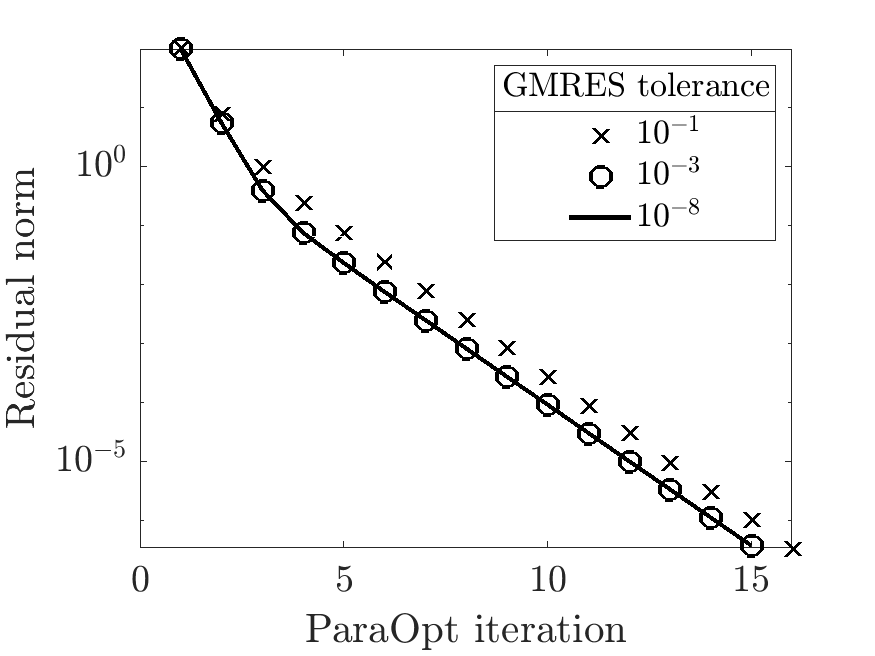}
        \caption{
            Tracking-type objective
        }
        \label{fig:po-num:prec:gmrestol:tr}
    \end{subfigure}
    \hfill
    \begin{subfigure}[c]{.49\textwidth}
        \includegraphics[width=\textwidth]{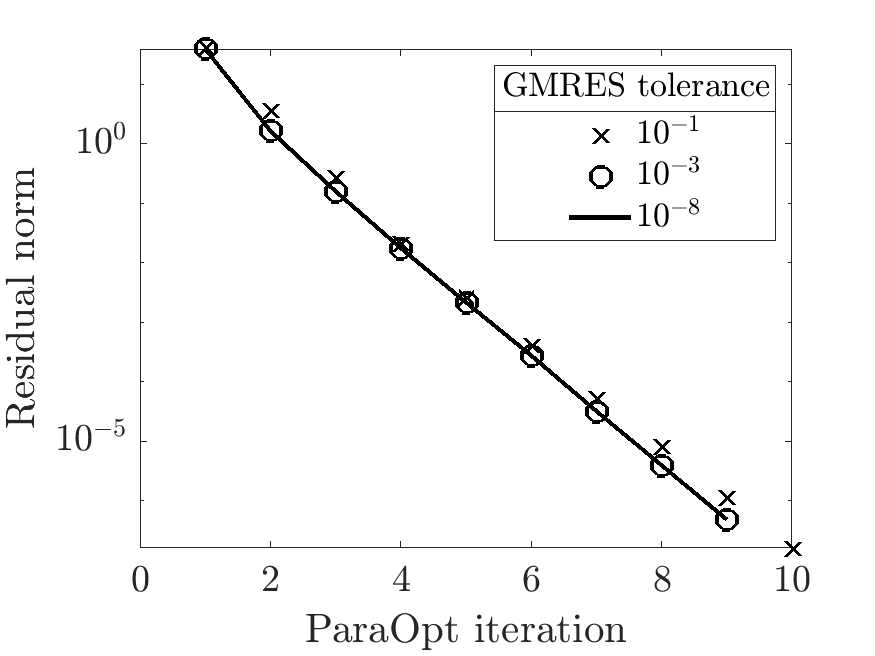}
        \caption{
            Terminal-cost objective
        }
        \label{fig:po-num:prec:gmrestol:tc}
    \end{subfigure}
    \caption{
        Heat problem ParaOpt residual, for different \textsc{gmres} tolerances
    }
    \label{fig:po-num:prec:gmrestol}
\end{figure}

\Cref{fig:po-num:prec:illustr} shows how many \textsc{gmres} iterations are required within each ParaOpt iteration for $\widehat\Tidx\in\{10,100\}$. Using the preconditioner, this number is not only much lower, but also remains virtually constant when $\widehat\Tidx$ changes. The difference preconditioning makes is seen even more strikingly in \cref{fig:po-num:prec:scale}, which sums up the \textsc{gmres} iterations over all ParaOpt iterations. Using our preconditioners, the work per processor remains constant when scaling the processors in tandem with $\widehat\Tidx$ -- in other words, the algorithm is weakly scalable.

\begin{figure}[h!]
    \centering
    \begin{subfigure}[c]{.47\textwidth}
        \includegraphics[width=\textwidth]{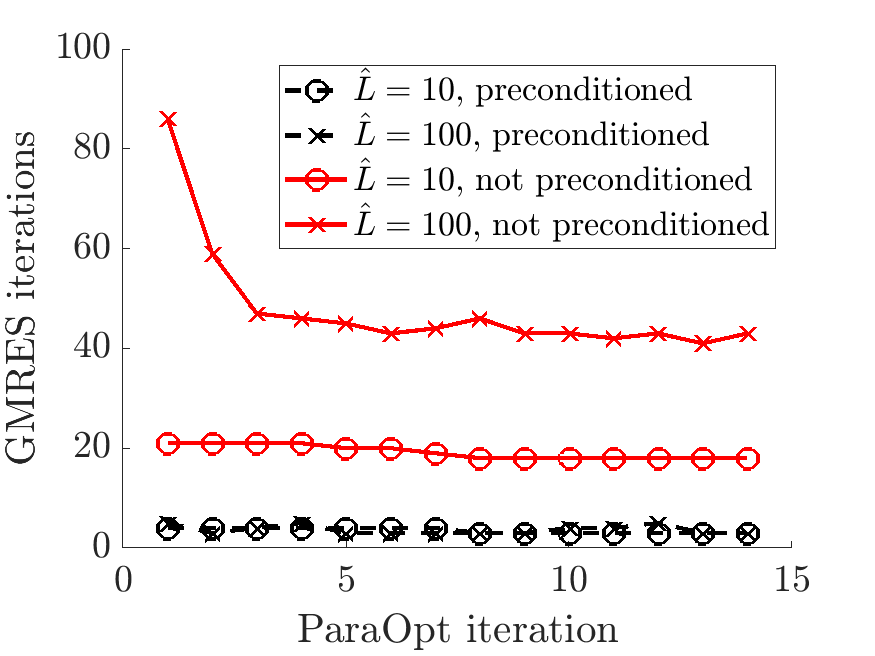}
        \caption{
            Tracking-type objective
        }
        \label{fig:po-num:prec:illustr:tr}
    \end{subfigure}
    \hfill
    \begin{subfigure}[c]{.47\textwidth}
        \includegraphics[width=\textwidth]{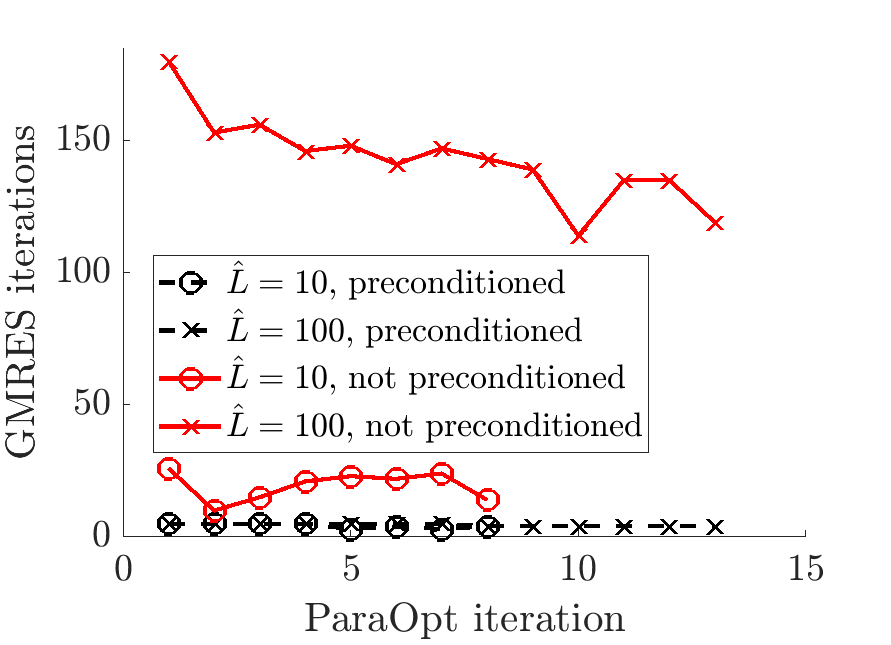}
        \caption{
            Terminal-cost objective
        }
        \label{fig:po-num:prec:illustr:tc}
    \end{subfigure}
    \vspace{-.2cm}
    \caption{
        (Un)preconditioned \textsc{gmres} iteration counts for the heat problem
    }
    \label{fig:po-num:prec:illustr}
\end{figure}
\begin{figure}[h!]
    \centering
    \begin{subfigure}[c]{.47\textwidth}
        \includegraphics[width=\textwidth]{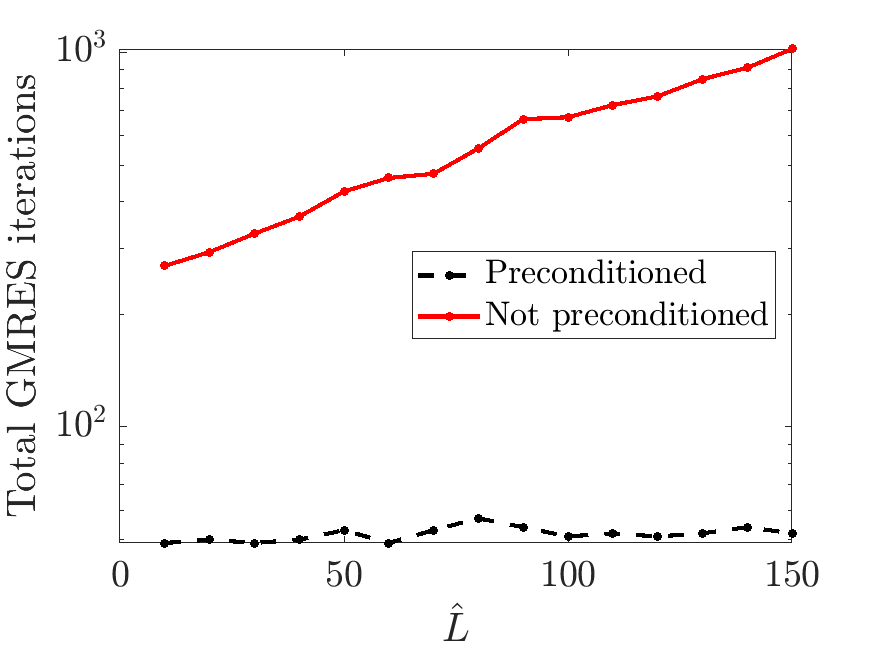}
        \caption{
            Tracking-type objective
        }
        \label{fig:po-num:prec:scale:tr}
    \end{subfigure}
    \hfill
    \begin{subfigure}[c]{.47\textwidth}
        \includegraphics[width=\textwidth]{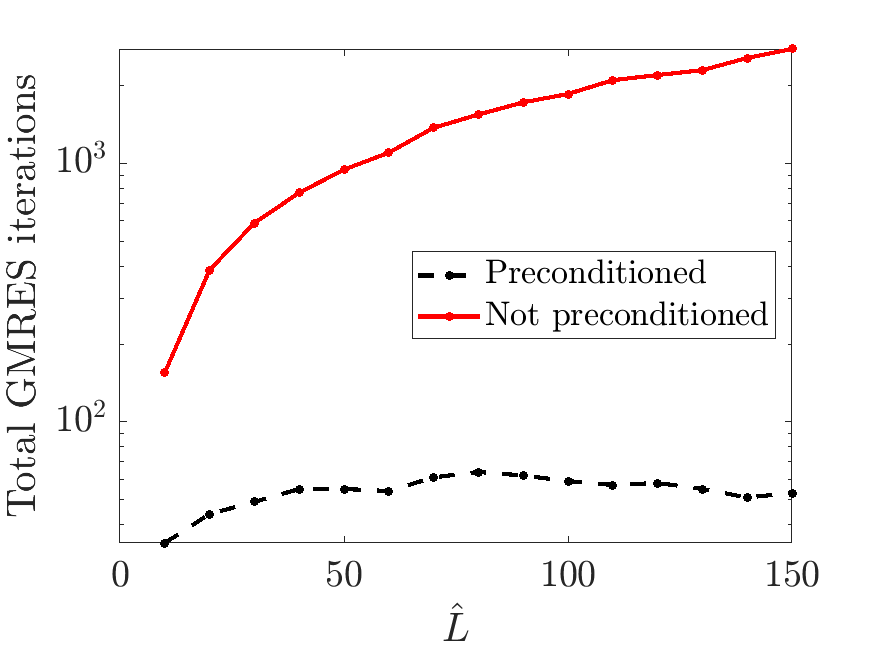}
        \caption{
            Terminal-cost objective
        }
        \label{fig:po-num:prec:scale:tc}
    \end{subfigure}
    \vspace{-.2cm}
    \caption{
        (Un)preconditioned total \textsc{gmres} iteration counts for the heat problem
    }
    \label{fig:po-num:prec:scale}
\end{figure}

\clearpage
\paragraph{Advection-diffusion problem}
In addition to the diffusion problem which follows the theory in \cref{sec:conv} and \cite{mezelfparadiag}, we study an advection-diffusion problem. Its $K$ matrix is not symmetric, such that our theory does not apply -- however, the preconditioners from \cref{sec:diag} can still be used. Consider the equation
\begin{equation}
    \partial_ty = \frac{\Delta y}{10} - \partial_{x_1}y - \partial_{x_2}y + u    \mper
\end{equation}
which retains some diffusion (otherwise, ParaOpt itself might have poor convergence, unrelated to the preconditioners) but adds an advection term. We use the same $y_\mathrm d$, $y_\mathrm{target}$, and $y_\mathrm{init}$ as for the heat problem and again discreti\sz{}e spatial derivatives using central differences. \Cref{fig:po-num:prec:da-illustr,fig:po-num:prec:da-scale} are the advection-diffusion counterparts to \cref{fig:po-num:prec:illustr,fig:po-num:prec:scale}, to which they are qualitatively very similar. We can conclude that the proposed preconditioner performs well, even outside the regime where it is fully understood. A similar conclusion was drawn in the ParaDiag context \cite{mezelfparadiag}.

\begin{figure}
    \centering
    \begin{subfigure}[c]{.47\textwidth}
        \includegraphics[width=\textwidth]{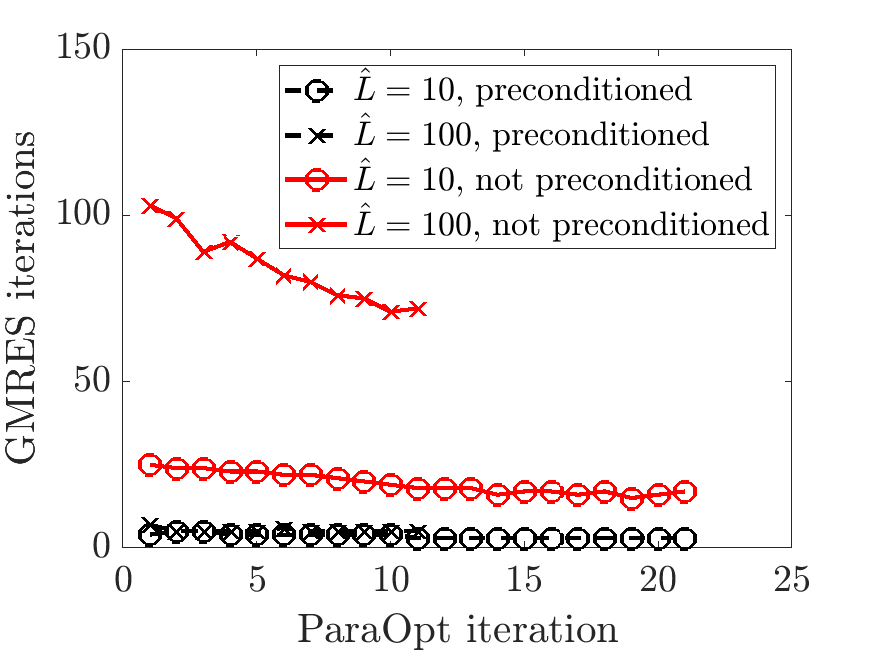}
        \caption{
            Tracking-type objective
        }
        \label{fig:po-num:prec:illustr:da-tr}
    \end{subfigure}
    \hfill
    \begin{subfigure}[c]{.47\textwidth}
        \includegraphics[width=\textwidth]{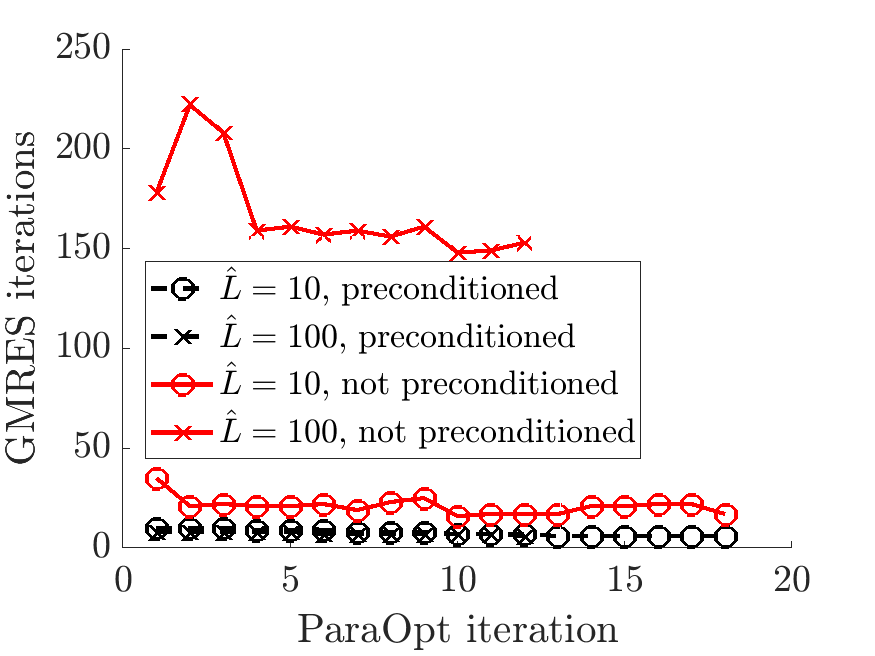}
        \caption{
            Terminal-cost objective
        }
        \label{fig:po-num:prec:illustr:da-tc}
    \end{subfigure}
    \vspace{-.2cm}
    \caption{
        (Un)preconditioned \textsc{gmres} iteration counts for the advection-diffusion problem
    }
    \label{fig:po-num:prec:da-illustr}
\end{figure}
\begin{figure}
    \centering
    \begin{subfigure}[c]{.47\textwidth}
        \includegraphics[width=\textwidth]{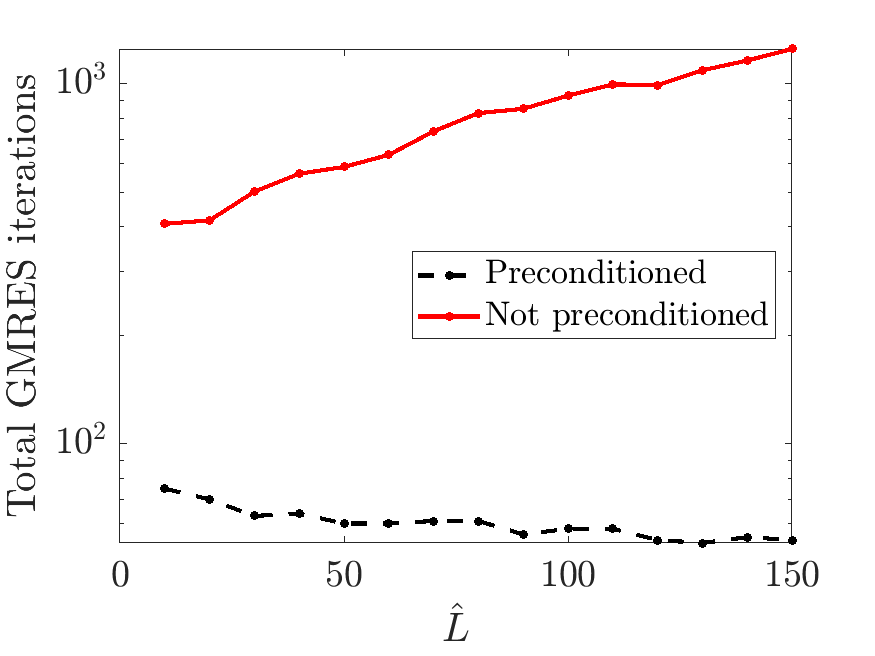}
        \caption{
            Tracking-type objective
        }
        \label{fig:po-num:prec:scale:da-tr}
    \end{subfigure}
    \hfill
    \begin{subfigure}[c]{.47\textwidth}
        \includegraphics[width=\textwidth]{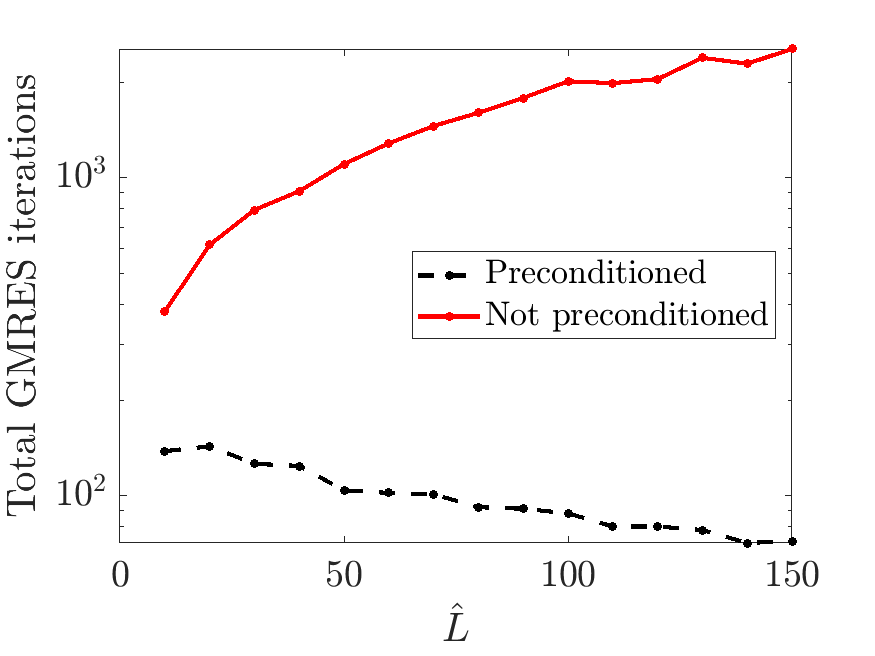}
        \caption{
            Terminal-cost objective
        }
        \label{fig:po-num:prec:scale:da-tc}
    \end{subfigure}
    \vspace{-.2cm}
    \caption{
        (Un)preconditioned total \textsc{gmres} iteration counts for the advection-diffusion problem
    }
    \label{fig:po-num:prec:da-scale}
\end{figure}

\section{Conclusions} \label{sec:concl}
    The main focus of this paper is on linear diffusive problems. In that setting, and when \cref{ass:conv:setting:1,ass:conv:setting:2,ass:conv:setting:3} are satisfied, we adapted ParaOpt to work for both tracking and terminal-cost objectives, additionally proposing generic convergence bounds and a preconditioner that ensures good weak scalability. We summari\sz{}e our main contributions in the linear diffusive setting. \begin{itemize}
    \item We formulated an extension of ParaOpt to the setting of tracking problems.
    \item We proved a generali\sz{}ed convergence bound for terminal-cost ParaOpt that is generic in the propagators used (as long as they satisfy \cref{ass:conv:setting:1,ass:conv:setting:2,ass:conv:setting:3}). Numerical results confirmed the bound's validity and showed that it is even sharper than the state-of-the-art result \cite{ganderPARAOPTPararealAlgorithm2020a}.
    \item We proved a similar bound for the tracking setting and proved that the case of an exact fine and a one-step implicit-Euler coarse propagator guarantees ParaOpt does not diverge.
    \item Thanks to the generic convergence bound, we were able to study a different type of implicit-Euler coarse propagator for terminal-cost problems and found that it improves ParaOpt's convergence.
    \item We proposed diagonali\sz{}ation-based preconditioners to improve the scaling of ParaOpt. Analytic results from the ParaDiag literature and numerical tests confirm that ParaOpt is now a weakly scalable algorithm.
\end{itemize}

For problems outside the linear diffusive category, ParaOpt is still a very promising method \cite{ganderPARAOPTPararealAlgorithm2020a}, although poorly understood. Our preconditioners apply as long as the coarse propagator is affine in its arguments, even for non-diffusive and non-symmetric problems. In addition, we are confident that these preconditioners can be extended to non-linear coarse propagators, although we leave this for future work. Recall that our application of diagonali\sz{}ation-based preconditioners to ParaOpt was inspired by proceedings in \cite{wuParallelCoarseGrid2018a} for the \textsc{ivp} Parareal algorithm. There, non-linear Parareal is supported by using non-linear variants of ParaDiag. For \textsc{ivp}s, these non-linear ParaDiag methods exist \cite{ganderTimeParallelizationNonlinear2017a,liuFastBlockAcirculant2020a}. Developing similar techniques for optimi\sz{}ation ParaDiag would open the way for preconditioning non-linear ParaOpt similarly to what we proposed.

Other interesting future work could consist of applying ParaOpt to realistic applications, testing the proposed preconditioners on more complex (but still linear) problems. In addition, with the generic bounds in \cref{sec:conv:conv}, the search for coarse propagators with better convergence properties than implicit Euler is wide open: one simply needs to calculate a propagator's $\tilde\varphi$ and $\tilde\psi$ (see \cref{sec:apdx-po-prop:phipsi}) to evaluate its performance.

We conclude by noting that ParaOpt does not necessarily need a parallel preconditioner; the fine propagations can already be paralleli\sz{}ed, so as long as the coarse-grid correction is cheap, it need not be parallel for the method itself to achieve speed-ups (in fact, the base version of Parareal has sequential coarse-grid correction). It would therefore be feasible for sequential preconditioners to match or surpass the performance of the ones proposed here, if they cause the system to be solved in fewer iterations.

\clearpage
\appendix
\section{Properties of some ParaOpt propagators} \label{sec:apdx-po-prop}
    This appendix contains calculations for various properties of the ParaOpt propagators in \cref{sec:conv}.

    \subsection{On $\varphi$ and $\psi$, and how to find them} \label{sec:apdx-po-prop:phipsi}
    % \Cref{sec:conv:setting} derived $\varphi$ and $\psi$ as eigenvalues of a matrix depending on the symmetric system matrix $K$ in the linear optimal-control problem. To recover them, however, it is easier to consider the scalar case with $g(y(t)) = -Ky(t) = -\sigma y(t)$ for $\sigma\in\mathbb R$. Now 

Consider a time interval $[T_{\tidx-1}, T_\tidx]$ with $T_\tidx-T_{\tidx-1}=\DT$. Then the tracking propagators $\mathcal P(\mv y_{\tidx-1},\mv{\widehat\ad}_\tidx)$ and $\mathcal Q(\mv y_{\tidx-1},\mv{\widehat\ad}_\tidx)$ for a linear diffusive system approximately solve the system\begin{subequations} \label{eq:apdx-po-prop:phipsi:subsys}
\begin{align}
    \mv y'(t) &= -K \mv y(t) - \mv{\widehat\ad}(t)/\sqrt\gamma, &\quad \mv y(T_{\tidx-1}) &= \mv y_{\tidx-1}    \mcom\\
    \mv{\widehat\ad}'(t) &= (\mv{y_\mathrm d}(t) - \mv y(t))/\sqrt\gamma + K\mv{\widehat\ad}(t), &\quad \mv{\widehat\ad}(T_\tidx) &= \mv{\widehat\ad}_\tidx    \mper
\end{align}
\end{subequations}
\Cref{ass:conv:setting:1,ass:conv:setting:2} can now be checked, either by reasoning about the discreti\sz{}ation scheme or by fully calculating $\Phi$ and $\Psi$. In the former case, the eigenvalues $\varphi$ and $\psi$ can then be found by considering the scalar problem\begin{subequations}\label{eq:apdx-po-prop:phipsi:subsys-scalar}
\begin{align}
    y'(t) &= -\sigma y(t) -{\widehat\ad}(t)/\sqrt\gamma, &\quad y(T_{\tidx-1}) &= y_{\tidx-1}\\
    {\widehat\ad}'(t) &= ({y_\mathrm d}(t) - y(t))/\sqrt\gamma + \sigma{\widehat\ad}(t), &\quad{\widehat\ad}(T_\tidx) &= {\widehat\ad}_\tidx    \mper
\end{align}
\end{subequations}
A discreti\sz{}ation of \cref{eq:apdx-po-prop:phipsi:subsys-scalar} and one of the expressions 
\begin{subequations}
\begin{align}
    y(T_\tidx) &\approx \mathcal P(y_{\tidx-1},\widehat\ad_\tidx) = \varphi y_{\tidx-1} - \psi\widehat\ad_\tidx + b_{\mathcal P,\tidx}    \mcom\\
    \widehat\ad(T_{\tidx-1}) &\approx \mathcal Q(y_{\tidx-1},\widehat\ad_\tidx) = \psi y_{\tidx-1} + \varphi\widehat\ad_\tidx + b_{\mathcal Q,\tidx}
\end{align}
\end{subequations} together yield $\varphi$ and $\psi$. An analogous procedure can be employed for terminal cost.

    \subsection{Proof of \cref{lmm:conv:setting:prop-tr-ie}} \label{sec:apdx-po-prop:prop-tr-ie}
    It is clear that \cref{ass:conv:setting:1,ass:conv:setting:2} hold for an \textsc{fotd} implicit-Euler discreti\sz{}ation of \cref{eq:apdx-po-prop:phipsi:subsys}, which discreti\sz{}es the state equation forward in time with implicit Euler and the adjoint equation backward. Indeed, affinity and simultaneous diagonali\sz{}ability follow from the fact that the discreti\sz{}ation can be written as a large linear system using only $K$ and scaled identity matrices, while the rest of \cref{ass:conv:setting:2} follows from the symmetry of \cref{eq:apdx-po-prop:phipsi:subsys} and the discreti\sz{}ation.

When using implicit Euler with $J$ steps to discreti\sz{}e the system \cref{eq:apdx-po-prop:phipsi:subsys-scalar}, let us -- with a slight abuse of notation -- introduce new indices $j$ on $y$ and $\ad$. The starting index $(\tidx-1)$ becomes $j=0$, $\tidx$ becomes $j=J$, and intermediate $j$ values are used for the finer grid of implicit Euler. The time step is $\tau = \DT/J$. Define $\zeta\coloneqq(1+\sigma\tau)$, such that the implicit-Euler discreti\sz{}ation reads (for $j = 1, \ldots, J$)
\begin{subequations} \label{eq:apdx-po-prop:prop-tr-ie:discr}
\begin{align}
    y_j - \zeta^{-1}y_{j-1} + \tau\zeta^{-1}\widehat\ad_j/\sqrt\gamma &= 0    \mcom\\
    \widehat\ad_{j-1} - \zeta^{-1}\widehat\ad_j - \tau\zeta^{-1}y_{j-1}/\sqrt\gamma &= -\tau\zeta^{-1}y_{\mathrm d,j-1}/\sqrt\gamma    \mper
\end{align}
\end{subequations}

Let us now define $\varphi_\tau^{(j)}$ and $\psi_\tau^{(j)}$ as the $\varphi$ and $\psi$ for $j$ length-$\tau$ steps -- then the variables we are looking for are $\varphi_\tau^{(J)}$ and $\psi_\tau^{(J)}$. For any $j$, we can write
\begin{equation} \label{eq:apdx-po-prop:prop-tr-ie:phipsi-j}
    -\varphi_\tau^{(j)}y_0 + y_j + \psi_\tau^{(j)}\widehat\ad_j = b_j \quad \text{for some $b_j$}   \mper
\end{equation}
This gives a base case $\varphi_\tau^{(0)} = 1$ and $\psi_\tau^{(0)} = 0$. Now suppose that $\varphi_\tau^{(j)}$ and $\psi_\tau^{(j)}$ are known for some $j$. Combining \cref{eq:apdx-po-prop:prop-tr-ie:phipsi-j} (for $j+1$) with the discreti\sz{}ation \cref{eq:apdx-po-prop:prop-tr-ie:discr} gives
\begin{equation*}
\begin{aligned}
    &&-\varphi_\tau^{(j+1)}y_0 + y_{j+1} + \psi_\tau^{(j+1)}\widehat\ad_{j+1} &= b_{j+1}\\
    &\Leftrightarrow & -\varphi_\tau^{(j+1)}y_0 + (\zeta^{-1}y_{j}-\widehat\gamma_\tau\zeta^{-1}\widehat\ad_{j+1}) + \psi_\tau^{(j+1)}\widehat\ad_{j+1} &= b_{j+1}\\
    &\Leftrightarrow & -\varphi_\tau^{(j+1)}y_0 + \zeta^{-1}y_{j} + (\psi_\tau^{(j+1)}-\widehat\gamma_\tau\zeta^{-1})(\zeta\widehat\ad_{j}-\widehat\gamma_\tau y_{j}) &= \widehat b_{j+1}\\
    &\Leftrightarrow & -\varphi_\tau^{(j+1)}y_0 + (\zeta^{-1}-\widehat\gamma_\tau(\psi_\tau^{(j+1)}-\widehat\gamma_\tau\zeta^{-1}))y_{j} + (\psi_\tau^{(j+1)}\zeta - \widehat\gamma_\tau)\widehat\ad_{j} &= \widehat b_{j+1}\\
\end{aligned}
\end{equation*}
for some $\widehat b_j$. Comparing this to \cref{eq:apdx-po-prop:prop-tr-ie:phipsi-j}, a few algebraic manipulations yield \cref{eq:lmm:conv:setting:prop-tr-ie:2}.

\clearpage
We should now check that \cref{ass:conv:setting:3} holds if $\sigma>0$. The expression for $\psi$ is clearly positive. We further have $\varphi_\tau^{(0)}=1$ and, for each subsequent $j$, $\varphi_\tau^{(j)}$ is equal to $\varphi_\tau^{(j-1)}$ multiplied by a factor $r^{(j)}\coloneqq(\zeta^{-1}-\widehat\gamma(\psi_\tau^{(j)}-\widehat\gamma\zeta^{-1}))$. We claim that $0<r^{(j)}<1$ for $j\ge1$, meaning that $0<\varphi_\tau^{(j)}<1$ for $j\ge1$. Indeed,
\begin{equation*}
\begin{aligned}
               && 0<r^{(j)}<1 \Leftrightarrow 0&<1-\zeta\widehat\gamma(\psi_\tau^{(j)}-\widehat\gamma\zeta^{-1})<\zeta\\
&\Leftrightarrow& (1+\widehat\gamma^2)/(\zeta\widehat\gamma) &> \psi_\tau^{(j)} > (1+\widehat\gamma^2-\zeta)/(\zeta\widehat\gamma)\\
&\Leftrightarrow& \frac{1+\widehat\gamma^2}{\zeta\widehat\gamma} &> \frac{\widehat\gamma+\zeta^{-1}(1+\widehat\gamma^2)\psi_\tau^{(j-1)}}{\zeta + \widehat\gamma\psi_\tau^{(j-1)}} > \frac{1+\widehat\gamma^2-\zeta}{\zeta\widehat\gamma}\\
&\Leftrightarrow& \frac1{\zeta\widehat\gamma} &> \frac{\psi_\tau^{(j-1)}}{\zeta^2+\zeta\widehat\gamma\psi_\tau^{(j-1)}} > \frac{1-\zeta}{\zeta\widehat\gamma}    \mper
\end{aligned}
\end{equation*}
These last inequalities hold; to see this, multiply the numerator and denominator of the leftmost expression by $\psi_\tau^{(j-1)}$ and note that the rightmost expression is negative.

    \subsection{Proof of \cref{lmm:conv:setting:prop-tr-ex}} \label{sec:apdx-po-prop:prop-tr-ex}
    For an exact solution to \cref{eq:apdx-po-prop:phipsi:subsys}, note that
\begin{equation} \label{eq:apdx-po-prop:prop-tr-ex:mexp}
    \begin{litmat}
        \mv y_\tidx\\\mv{\widehat\ad}_\tidx
    \end{litmat} = \exp\left(
        \DT\begin{litmat}
            -K & -I/\sqrt\gamma\\
            -I/\sqrt\gamma & K
        \end{litmat}
    \right)\begin{litmat}
        \mv y_{\tidx-1}\\\mv{\widehat\ad}_{\tidx-1}
    \end{litmat} + \begin{litmat}
        \mv v\\\mv w
    \end{litmat}
\end{equation}
for some $\mv v$ and $\mv w$ (both dependent on $\mv{y_\mathrm d}$) that are of little importance here. \Cref{ass:conv:setting:1,ass:conv:setting:2} follow easily from this expression. We can then consider the scalar case (by replacing $K$ with $\sigma$ and $I$ with $1$) and denote the matrix exponential in \cref{eq:apdx-po-prop:prop-tr-ex:mexp} as $E \eqqcolon \bigl[\begin{smallmatrix}
    \cdot & \cdot\\c & d\\
\end{smallmatrix}\bigr]$. We obtain that $\widehat\ad_\tidx = cy_{\tidx-1}+d\widehat\ad_{\tidx-1}$ and, thus,
\begin{equation} \label{eq:apdx-po-prop:prop-tr-ex:phipsi}
    \varphi = d^{-1} \quad \text{and} \quad \psi = -d^{-1}c    \mper
\end{equation}

To calculate $E$, finding $c$ and $d$, denote $M \coloneqq \DT\bigl[\begin{smallmatrix}
        -\sigma & -1/\sqrt\gamma\\
        -1/\sqrt\gamma & \sigma
    \end{smallmatrix}\bigr] = \bigl[\begin{smallmatrix}
        -\widehat\sigma & -\widehat\gamma\\
        -\widehat\gamma & \widehat\sigma
    \end{smallmatrix}\bigr]$.
We define $s\coloneqq\sqrt{\widehat\sigma^2+\widehat\gamma^2}$. It can be checked that $M=V\Sigma V^{-1}$ with
\begin{equation}
    \Sigma = \begin{litmat}
        s &\\&-s
    \end{litmat}, \quad V = \begin{litmat}
        \frac{\widehat\sigma - s}{\widehat\gamma} & \frac{\widehat\sigma + s}{\widehat\gamma}\\
        1 & 1
    \end{litmat}, \quad \text{and} \quad V^{-1} = \begin{litmat}
        \frac{-\widehat\gamma}{2s} & \frac{\widehat\sigma+s}{2s}\\
        \frac{\widehat\gamma}{2s} & \frac{-\widehat\sigma+s}{2s}
    \end{litmat}    \mper
\end{equation}
Then $\exp(M) = V\exp(\Sigma)V^{-1}$, where the exponential of a diagonal matrix can be distributed to its entries. We obtain
\begin{equation}
\begin{aligned}
    \exp(M) &= \begin{litmat}
        \frac{\widehat\sigma-s}{\widehat\gamma} & \frac{\widehat\sigma+s}{\widehat\gamma}\\
        1 & 1\\
    \end{litmat}\begin{litmat}
        \E^s\\&\E^{-s}
    \end{litmat}\begin{litmat}
        \frac{-\widehat\gamma}{2s} & \frac{\widehat\sigma+s}{2s}\\
        \frac{\widehat\gamma}{2s} & \frac{-\widehat\sigma+s}{2s}\\
    \end{litmat}\\
    % &= \begin{litmat}
    %     \frac{\widehat\sigma-s}{\widehat\gamma} & \frac{\widehat\sigma+s}{\widehat\gamma}\\
    %     1 & 1\\
    % \end{litmat}\begin{litmat}
    %     \E^s\frac{-\widehat\gamma}{2s} & \E^s\frac{\widehat\sigma+s}{2s}\\
    %     \E^{-s}\frac{\widehat\gamma}{2s} & \E^{-s}\frac{-\widehat\sigma+s}{2s}\\
    % \end{litmat}\\
    &= \begin{litmat}
        -\E^s\frac{\widehat\gamma}{2s}\frac{\widehat\sigma-s}{\widehat\gamma} + \E^{-s}\frac{\widehat\gamma}{2s}\frac{\widehat\sigma+s}{\widehat\gamma} & \E^s\frac{\widehat\sigma+s}{2s}\frac{\widehat\sigma-s}{\widehat\gamma} + \E^{-s}\frac{-\widehat\sigma+s}{2s}\frac{\widehat\sigma+s}{\widehat\gamma}\\
        -\E^s\frac{\widehat\gamma}{2s} + \E^{-s}\frac{\widehat\gamma}{2s} & \E^s\frac{\widehat\sigma+s}{2s} + \E^{-s}\frac{-\widehat\sigma+s}{2s}\\
    \end{litmat}\\
    &= \begin{litmat}
        \cosh s - \widehat\sigma\frac{\sinh s}s & -\widehat\gamma\frac{\sinh s}s\\
        -\widehat\gamma\frac{\sinh s}s & \cosh s + \widehat\sigma\frac{\sinh s}s\\
    \end{litmat}    \mcom
\end{aligned}
\end{equation}
from which follow
\begin{equation}
    c = -\widehat\gamma\frac{\sinh(\sqrt{\widehat\gamma^2 + \widehat\sigma^2})}{\sqrt{\widehat\gamma^2 + \widehat\sigma^2}} \quad \text{and} \quad d = \cosh(\sqrt{\widehat\gamma^2 + \widehat\sigma^2}) + \widehat\sigma\frac{\sinh(\sqrt{\widehat\gamma^2 + \widehat\sigma^2})}{\sqrt{\widehat\gamma^2 + \widehat\sigma^2}}    \mper
\end{equation}
\Cref{ass:conv:setting:3} clearly holds: $\psi$ is positive and $d>1$ (since it is the sum of a hyperbolic cosine and a positive number), meaning $0<\varphi=d^{-1}<1$.

    \subsection{Proof for \cref{ex:conv:conv:special}} \label{sec:apdx-po-prop:special}
    We will need to prove that the right-hand side of \cref{eq:thm:conv:conv:tr-gen:bound} is smaller than $1$ when $\varphi\eqqcolon\varphi_\ex$ and $\psi\eqqcolon\psi_\ex$ are given by \cref{eq:apdx-po-prop:prop-tr-ex:phipsi}, and $\tilde\varphi\eqqcolon\varphi_\DT=\varphi_\DT^{(1)}$ and $\tilde\psi\eqqcolon\psi_\DT=\psi_\DT^{(1)}$ are given by \cref{eq:apdx-po-prop:prop-tr-ie:phipsi-j}. We introduce some auxiliary variables to simplify working with the exact propagators:
\begin{equation*}
    s \coloneqq \sqrt{\widehat\sigma^2 + \widehat\gamma^2}, \quad a\coloneqq\cosh(s), \quad \text{and} \quad b\coloneqq\frac{\sinh(s)}s
\end{equation*}
such that $d=a+\widehat\sigma b$ and $\abs c = -c = \widehat\gamma b$. Since $\widehat\sigma>0$, it is trivial to see that $a,b,d>1$. Furthermore, $a>b$, as becomes clear from their respective Maclaurin series
\begin{equation} \label{eq:apds-po-prop:special:mclaurin}
    \cosh(s) = 1 + \frac{s^2}{2!} + \frac{s^4}{4!} + \cdots \quad \text{and} \quad \frac{\sinh(s)}s = 1 + \frac{s^2}{3!} + \frac{s^4}{5!} + \cdots    \mper
\end{equation}

To prove the bound in \cref{ex:conv:conv:special}, we write
\begin{align*}
    &&\rho^* &\le 1\\
&\Leftrightarrow&    (\varphi_\DT-\varphi_\ex)^2 + (\psi_\DT-\psi_\ex)^2 &\le (1-\varphi_\DT)^2 + \psi_\DT^2\\
&\Leftrightarrow&    2\varphi_\DT\varphi_\ex - \varphi_\ex^2 + 2\psi_\DT\psi - \psi_\ex^2 + 1 - 2\varphi_\DT &\ge 0\\
&\Leftrightarrow&    2\varphi_\ex - (1+\widehat\sigma)\varphi_\ex^2 + 2\widehat\gamma\psi_\ex - (1+\widehat\sigma)\psi_\ex^2 + (1 + \widehat\sigma) - 2 &\ge 0\\
&\Leftrightarrow&    2d(1+\widehat\gamma\abs c) - d^2(1-\widehat\sigma) - (1+\widehat\sigma)(1+\abs c^2) &\ge 0    \mper
\end{align*}
This expression contains one term that is twice a positive quantity, and two that might be negative. It holds if a single instance of the first term plus either one of the others is positive. The first of these conditions is
\begin{equation} \label{eq:apdx-po-prop:special:ineq1-1}
\begin{aligned}
    &&d(1+\widehat\gamma\abs c) \ge d^2(1-\widehat\sigma) \Leftrightarrow 1 + \widehat\gamma\abs c &\ge d(1-\widehat\sigma)\\
&\Leftrightarrow&    1 + \widehat\gamma^2b &\ge (a+\widehat\sigma y)(1-\widehat\sigma)\\
&\Leftarrow&    1 + \widehat\gamma^2b &\ge a(1-\widehat\sigma^2)    \mper\\
&\Leftrightarrow&    1 + \widehat\gamma^2b - a(1-\widehat\sigma^2) &\ge 0    \mcom
\end{aligned}
\end{equation}
where the unidirectional implication holds since $a>b$. To show that \cref{eq:apdx-po-prop:special:ineq1-1}'s last inequality holds, consider its left-hand side's derivative with respect to $\widehat\gamma^2$:
\begin{equation}
\begin{aligned}
    &\frac\dif{\dif\widehat\gamma^2}(1+\widehat\gamma^2b-a(1-\widehat\sigma^2))\\ &= b + \widehat\gamma^2\frac\dif{\dif\widehat\gamma^2}b - (1-\widehat\sigma^2)\frac\dif{\dif\widehat\gamma^2}a
    = b + \widehat\gamma^2(\frac{\cosh s}{2s^2}-\frac{\sinh s}{2s^3}) - (1-\widehat\sigma^2)\frac{\sinh s}{2s}\\
    &= (1-\frac12+\frac{\widehat\sigma^2}2-\frac{\widehat\gamma^2}{2s^2})b + \frac{\widehat\gamma^2}{2s^2}a
    = \frac{\widehat\sigma^2+\widehat\sigma^2\widehat\gamma^2+\widehat\sigma^4}{2s^2}b + \frac{\widehat\gamma^2}{2s^2}a    \mper
\end{aligned}
\end{equation}
This derivative is always positive. In other words, if \cref{eq:apdx-po-prop:special:ineq1-1}'s last inequality holds for $\widehat\gamma\rightarrow0$, it holds for all $\widehat\gamma$. In this limit, \cref{eq:apdx-po-prop:special:ineq1-1} becomes
\begin{equation}
    1 \ge \cosh(\widehat\sigma)(1-\widehat\sigma^2)    \mcom
\end{equation}
which holds for any $\widehat\sigma$ (multiply the first Maclaurin series in \cref{eq:apds-po-prop:special:mclaurin} in $\widehat\sigma$ by $1-\widehat\sigma^2$).

The second inequality from earlier takes less effort:
\begin{equation*}
\begin{aligned}
    &&d(1+\widehat\gamma \abs c) &\ge (1+\widehat\sigma)(1+\abs c^2)
\Leftrightarrow    (a+\widehat\sigma b)(1+\widehat\gamma^2b) \ge (1+\widehat\sigma)(1+\widehat\gamma^2b^2) \Leftarrow\\
  &&  b(1+\widehat\sigma)(1+\widehat\gamma^2b) &\ge (1+\widehat\sigma)(1+\widehat\gamma^2b^2)
\Leftrightarrow    (1+\widehat\sigma)(y+\widehat\gamma^2b^2) \ge (1+\widehat\sigma)(1+\widehat\gamma^2b^2)    \mcom
\end{aligned}
\end{equation*}
where, again, $a>b$ justifies the unidirectional implication. Since $b>1$, this inequality holds and, together with the previous inequality, the bound is now proven.

\clearpage
\section*{Acknowledgments}
    The authors thank Ignace Bossuyt, Toon Ingelaere, and Vince Maes for their reviews and helpful comments, and Carlos Fonseca for pointing to \cite{betterThanYueh} as the original reference to use in \cref{lmm:paraopt:M2eig}'s proof. This project received funding from the European High-Performance Computing Joint Undertaking (JU) under grant agreement No.\ 955701. The JU receives support from the EU's Horizon 2020 programme. Karl Meerbergen's work is supported by the Research Foundation Flanders grants G0B7818N and G088622N, and by the KU Leuven Research Council.

\bibliographystyle{siamplain}
\bibliography{references}
    
\end{document}